\title{Scattering and Localization Properties of Highly Oscillatory Potentials}
\author{V. Duch\^ene\footnotemark[1], I. Vuki{\'c}evi{\'c}\footnotemark[1] \& M.I. Weinstein\thanks{Department of Applied Physics and Applied Mathematics, Columbia University, 200 S.W. Mudd Building, Mail Code 4701, New York, NY 10027, USA}}
\date{\today}
\numberwithin{equation}{section}
\newcommand{\RR}{\mathbb{R}}
\newcommand{\CC}{\mathbb{C}}
\newcommand{\NN}{\mathbb{N}}
\renewcommand{\t}{\tilde}
\newcommand{\m}{\mathfrak{m}}
\renewcommand{\L}{\mathcal{L}}
\renewcommand{\O}{\mathcal{O}}
\newcommand{\eps}{\epsilon}
\newcommand{\dd}{\ \text{d}}
\newcommand{\seff}{{\sigma_{\text{eff}}^\eps}}
\newcommand{\nn}{\nonumber}
\newtheorem{theorem}{Theorem}[section]
\newtheorem{Definition}[theorem]{Definition}
\newtheorem{Proposition}[theorem]{Proposition}
\newtheorem{Corollary}[theorem]{Corollary}
\newtheorem{Lemma}[theorem]{Lemma}
\newtheorem{Remark}[theorem]{Remark}
\let\Title\@title
\let\Author\@author
\markboth{\@title }{\@author }
\begin{document}

\maketitle

\begin{abstract} 
 We investigate scattering, localization and dispersive time-decay properties for the one-dimensional Schr\"odinger equation with a rapidly oscillating and spatially localized potential, $q_\eps=q(x,x/\eps)$, where $q(x,y)$ is periodic and mean zero with respect to $y$. Such potentials model a microstructured medium. Homogenization theory fails to capture the correct low-energy ($k$ small) behavior of scattering quantities, {\it e.g.} the transmission coefficient, $t^{q_\eps}(k)$, as $\eps$ tends to zero. We derive an {\it effective potential well}, $\seff(x)=-\eps^2\Lambda_{\rm eff}(x)$, such that $t^{q_\eps}(k)-t^\seff(k)$ is small, uniformly for $k\in\mathbb{R}$ as well as in any bounded subset of a suitable complex strip. Within such a bounded subset, the scaled limit of the transmission coefficient has a universal form, depending on a single parameter, which is computable from the effective potential. A consequence is that if $\eps$, the scale of oscillation of the microstructure potential, is sufficiently small, then there is a pole of the transmission coefficient (and hence of the resolvent) in the upper half plane, on the imaginary axis at a distance of order $\eps^2$ from zero. It follows that the Schr\"odinger operator $H_{q_\eps}=-\partial_x^2+q_\eps(x)$ has an 
 $L^2$ bound state with negative energy situated
 a distance $\mathcal{O}(\eps^4)$ from the edge of the continuous spectrum. Finally, we use this detailed information to prove the local energy time-decay estimate: ${\big|(1+|\cdot|)^{-3}e^{-i t H_{q_\eps}} P_c \psi_0\big|_{L^\infty} \le C\ t^{-1/2}\ 
 \left(1+\eps^4\ \big(\int_\mathbb{R}\Lambda_{\rm eff}\right)^2 t\ \big)^{-1} \big|(1+|\cdot|^3)\psi_0\big|_{L^1}}$\ ,  where $P_c$ denotes the projection onto the continuous spectral part of $H_{q_\eps}$.
\end{abstract}

\maketitle 






\section{Introduction}

 We investigate scattering and localization phenomena for the one-dimensional 
 Schr\"odinger equation, $i\partial_t\psi=(-\partial_x^2+V(x))\psi$, where $V$ denotes a real-valued, rapidly oscillating and spatially localized potential. This equation governs the behavior of a quantum particle or, in the paraxial  approximation of electromagnetics, waves in a medium with strong and rapidly varying inhomogeneities. We find interesting and subtle low energy behavior and study its consequences for scattering, localization and dispersive time-decay. Our results imply the existence of  waveguide  modes which display very short length-scale localization of light in photonic microstructures \cite{DDEW-inprogress}.

The scattering problem for the Schr\"odinger equation
 \begin{align}
&\left( H_V\ -\ k^2 \right) u\ =\ 0,\ \ \ \ H_V\ \equiv\ -\partial_x^2\ +\ V(x),
\label{Hq-def}
\end{align}
 is the question of the scattered field in response to an incoming plane wave, $e^{ikx}$:
\begin{align}
u(x;k)\ =\ 
\left\{\begin{array}{ll} e^{ikx}\ +\ r^V(k)e^{-ikx},\ \ &x\to-\infty\ ,\\
 t^V(k) e^{ikx},\ \ &x\to+\infty\ .\end{array}\right.
 \label{scattering} \end{align}
$t^V(k)$ and $r^V(k)$ are called reflection and transmission coefficients for the potential~$V$; see section~\ref{sec:quicksum}. Considered as a function of a complex variable, $k$, the transmission coefficient, $t^V(k)$, is meromorphic in the upper half $k-$plane, having possibly simple poles located on the positive imaginary axis. If $i\rho,\ \rho>0$, is a pole of $t^V$ then $E=-\rho^2$ is a discrete eigenvalue of $H_V$ of multiplicity one.

In this paper, we are interested in the case where $V(x)$ is spatially localized and highly oscillatory. A class of potentials to which our results apply are potentials of the form:
\begin{equation}
V_\eps(x) \ = \ q_{\rm av}(x)+q(x,x/\eps),\ \ \eps\ll1\ .\label{Vdef}
\end{equation}
Here, $q_{\rm av}(x)$ denotes a spatially localized background average potential and $q(x,y)$ 
a potential which is spatially localized on the slow scale, $x$, and periodic and mean zero on the fast scale $y$:
\begin{equation}
q(x,y+1)\ =\ q(x,y), \qquad \text{and} \qquad 
\int_0^1q(x,y)\ \dd y\ =\ 0.
\label{1periodicmean0}\end{equation}
Thus,
\begin{equation}
q(x,y)\ =\ \sum_{j\ne0}\ q_j(x)\ e^{2\pi i j y}.
\label{q-eps-per}
\end{equation}
More generally, our theory admits potentials which are aperiodic. For example, we allow for real-valued potentials:
\begin{equation}
q(x,y)\ =\ \sum_{j\ne0}\ q_j(x)\ e^{2\pi i\lambda_j y},
\label{q-eps-gen}
\end{equation}
where $\{\lambda_j\}_{j\in\mathbb{Z}\setminus\{0\}}$ is a sequence of non-zero distinct frequencies for which
 there is a constant $\theta>0$ such that
\begin{equation}
\inf_{j\ne k} |\lambda_j-\lambda_k|\ge\theta>0, \quad \inf_{j\in\mathbb{Z}} |\lambda_j|\ge\theta>0.
\label{theta-lb}
\end{equation}
That $q$ is real-valued is imposed by:
\begin{align}
\overline{q_j(x)}\ &=\ q_{-j}(x),\ \qquad\ \ \lambda_{-j}\ =\ -\lambda_j,\ \ \ \ \ j\in\mathbb{Z}\setminus\{0\}\ .
\end{align}
\noindent We ask the following:
 
\noindent {\it Question:\ What are the characteristics of solutions to the scattering problem~\eqref{Hq-def}, \eqref{scattering} in the limit as $\eps$ tends to zero?}\smallskip
 
 For fixed $k\ne0$, this is the regime where averaging or homogenization theory applies; the leading order behavior in $\eps$ is governed by the average of $V_\eps$ over its fast variations. To simplify the present motivating discussion we consider the case where $V_\eps$ is periodic on the fast scale with vanishing mean,
 satisfying~\eqref{1periodicmean0}.
 Then, for any fixed $k\ne0$, as $\eps\to0$, we have
\[
 t^{V_\eps}(k) \to t^{0}(k)\equiv 1,\ \ r^{V_\eps}(k)\to r^{0}(k)\equiv0\ ;
\]
 see~\cite{DucheneWeinstein:11}, which contains very detailed asymptotic expansions of $t^{V_\eps}(k)$ for a general class of $V_\eps$, admitting singularities.
Very generally, as $k$ tends to infinity, $t^V(k)\to1$; the large $k$ transmission behavior of $V_\eps(x)$ 
and its average, $q_{\rm av}(x)$, agree. 

However, the low energy, $k\approx0$, comparison between the scattering behavior for $q_{\rm av}(x)\equiv0$ and $V_\eps(x)$ is far more subtle. First of all, the potential $V(x)\equiv0$ has non-generic low energy behavior!\ Indeed, for {\it generic} localized potentials, $V$, $ \lim_{k\to0} t^V(k)=0$; see the discussion of and references to genericity in Section~\ref{sec:quicksum}.
Thus we expect (and our analysis implies for small and non-zero $\eps$) that $t^{V_\eps}(k)\to0$ as $k\to0$; see Corollary~\ref{cor:qeps-is-generic}. 
%

 It follows that the convergence of $t^{V_\eps}(k)$, as $\eps$ tends to zero, to the {\it homogenized transmission coefficient} $t^{q_{\rm av}}(k)\equiv t^{0}(k)\equiv 1$ is non-uniform in a neighborhood of $k=0$. Figure~\ref{fig:subfigc} displays plots of $t^{V_\eps}(k)$ for several successively smaller values of $\eps$. 
{\it Underlying this non-uniformity is a subtle behavior of $t^{V_\eps}(k)$ in the complex plane and an 
interesting localization phenomenon, which we now explain.}

 \begin{figure}
 \subfigure[$V_\eps(x) =\mathbf{1}_{[-1;1]}(x)10e^{\frac{-x^2}{1-x^2}}\cos(2\pi x/\eps)$]{
\includegraphics[width=0.48\textwidth]{./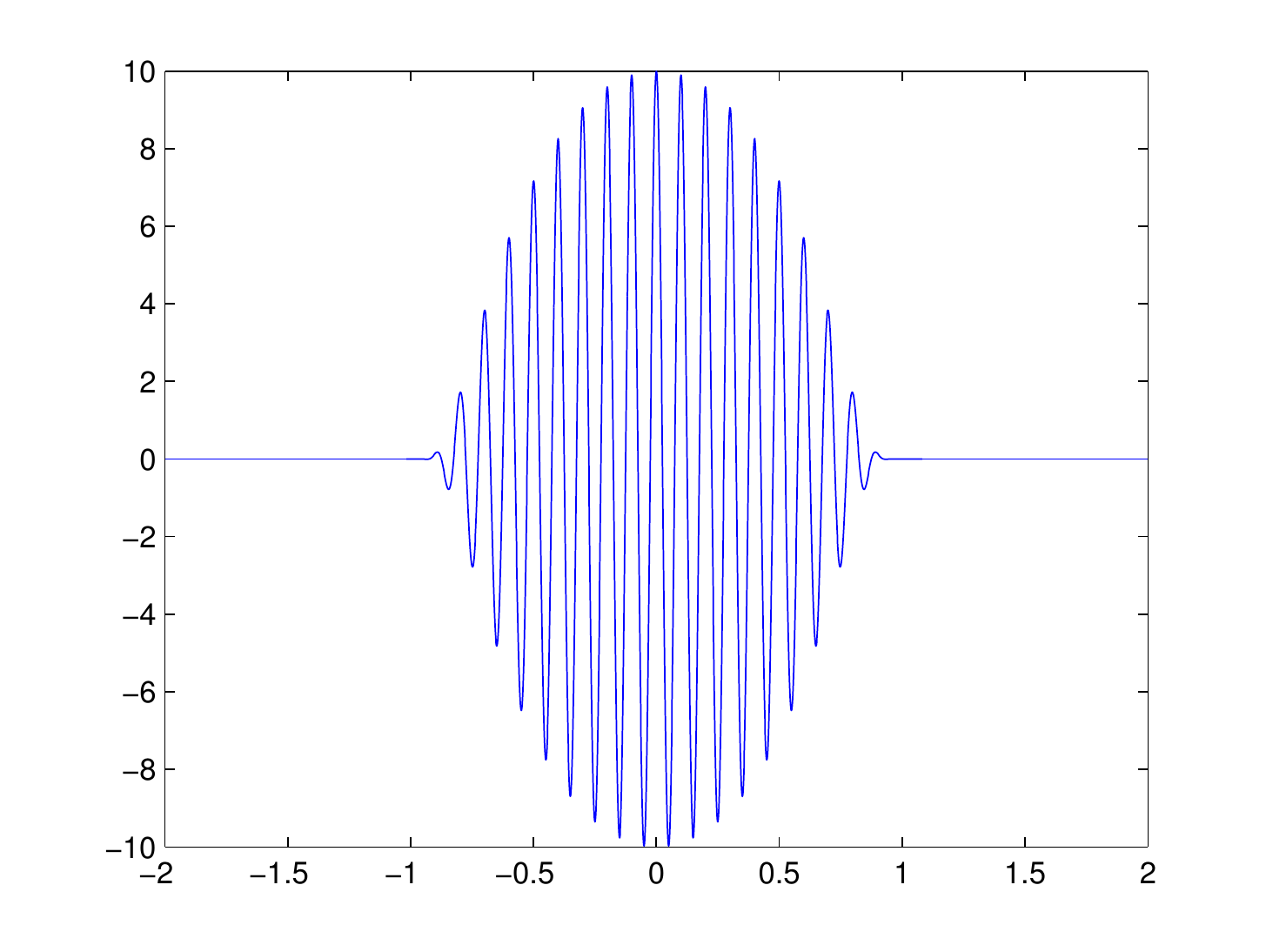}
\label{fig:subfiga}
}
\subfigure[$\seff(x) =\frac{-\eps^2}{8\pi^2}\mathbf{1}_{[-1;1]}(x)\Big(10e^{\frac{-x^2}{1-x^2}}\cos\big(\frac{2\pi x}\eps\big)\Big)^2$]{
\includegraphics[width=0.48\textwidth]{./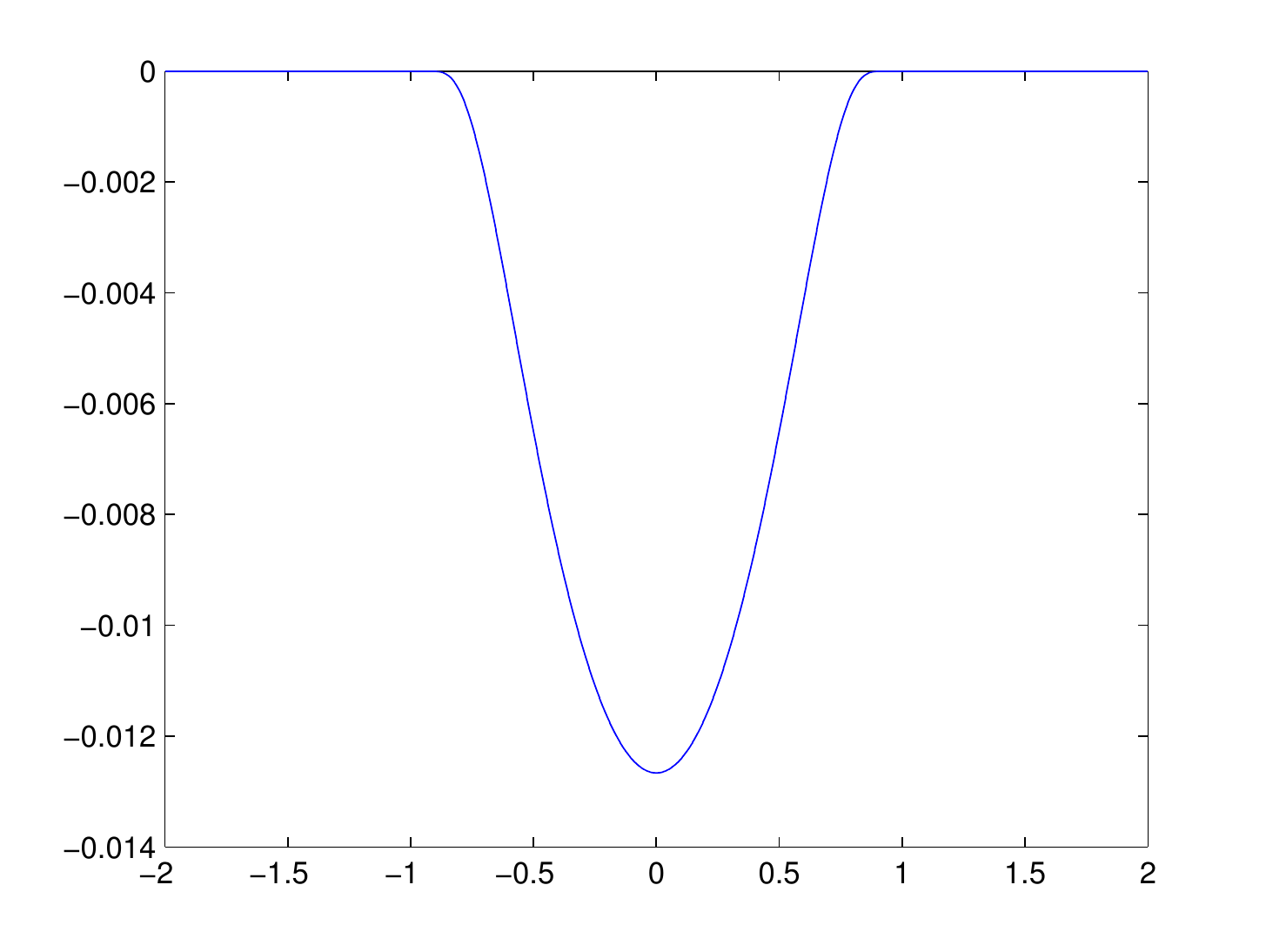}
\label{fig:subfigb}
}
\subfigure[$|t^{V_\eps}(k)|$, $k \in (0;0.1)$, $\eps$ varying]{
\includegraphics[width=0.48\textwidth]{./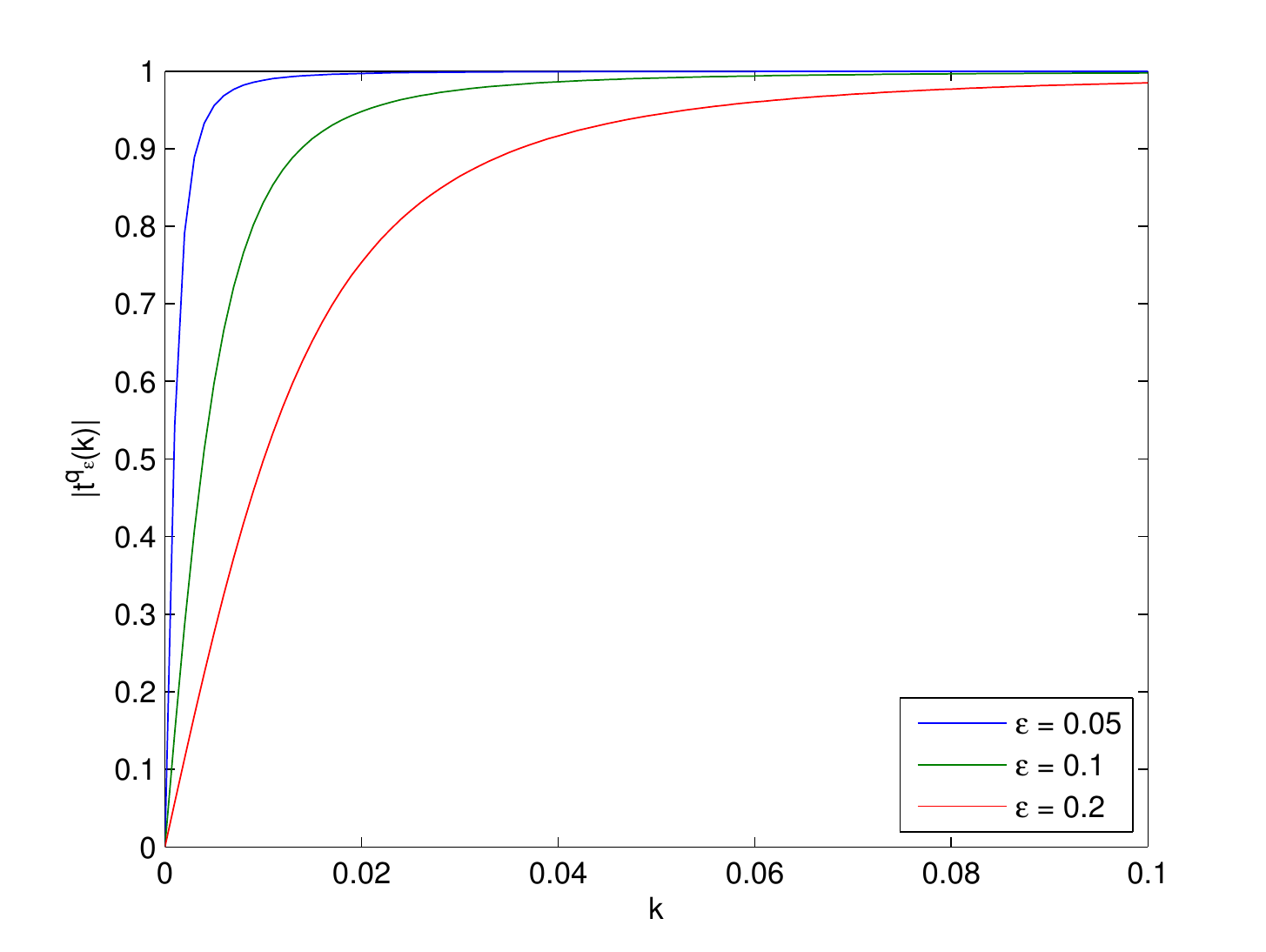}
\label{fig:subfigc}
}
\subfigure[$\big\vert t^{\seff}(k)\big\vert$, $k \in (0;0.1)$, $\eps$ varying]{
\includegraphics[width=0.48\textwidth]{./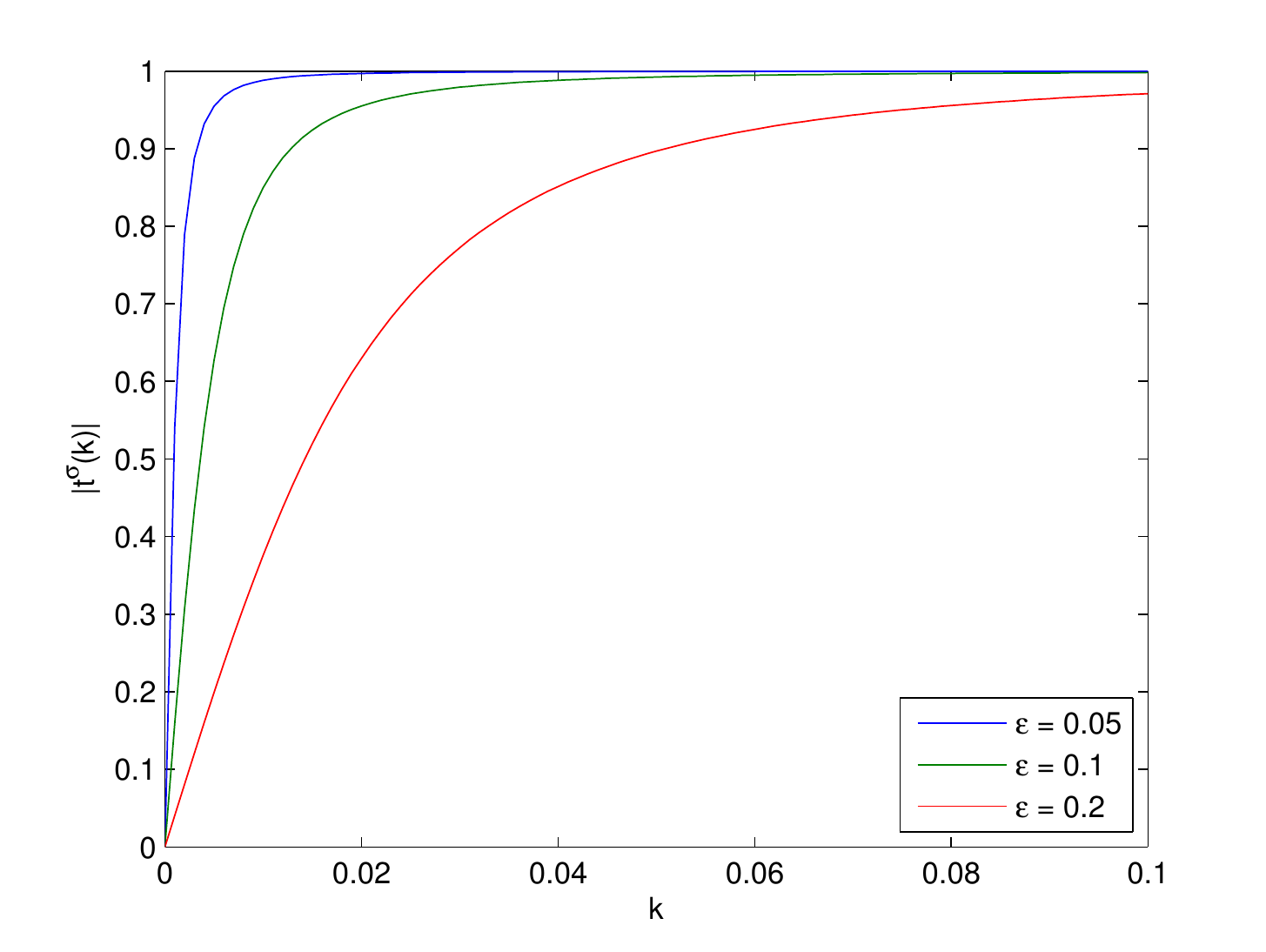}
\label{fig:subfigd}
}
\subfigure[$|t^{V_\eps}(\eps^2\kappa)|$, $\kappa \in (0;2)$, $\eps$ varying]{
\includegraphics[width=0.48\textwidth]{./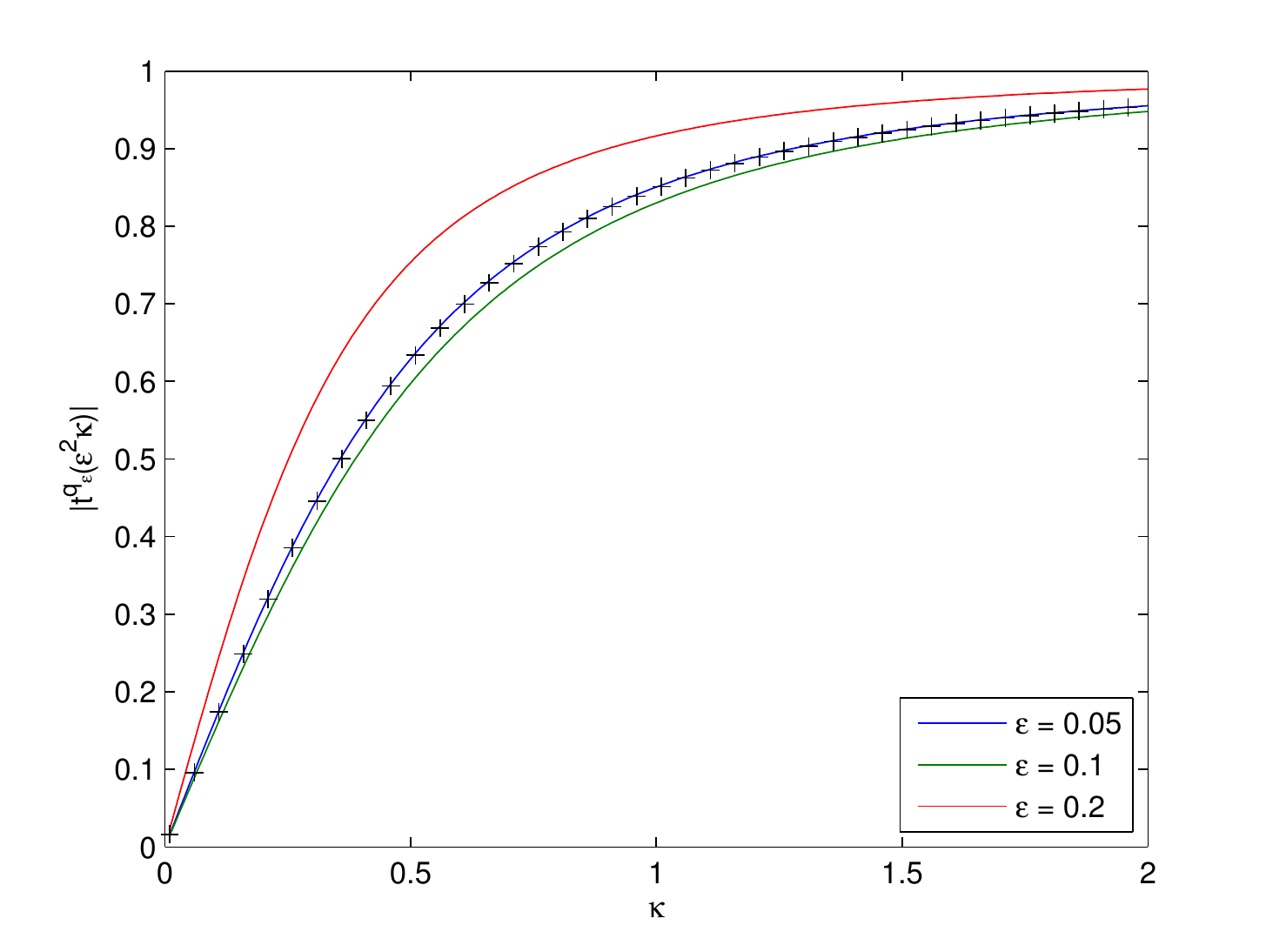}
\label{fig:subfige}
}
\subfigure[$|t^{\seff}(\eps^2\kappa)|$, $\kappa \in (0;2)$, $\eps$ varying]{
\includegraphics[width=0.48\textwidth]{./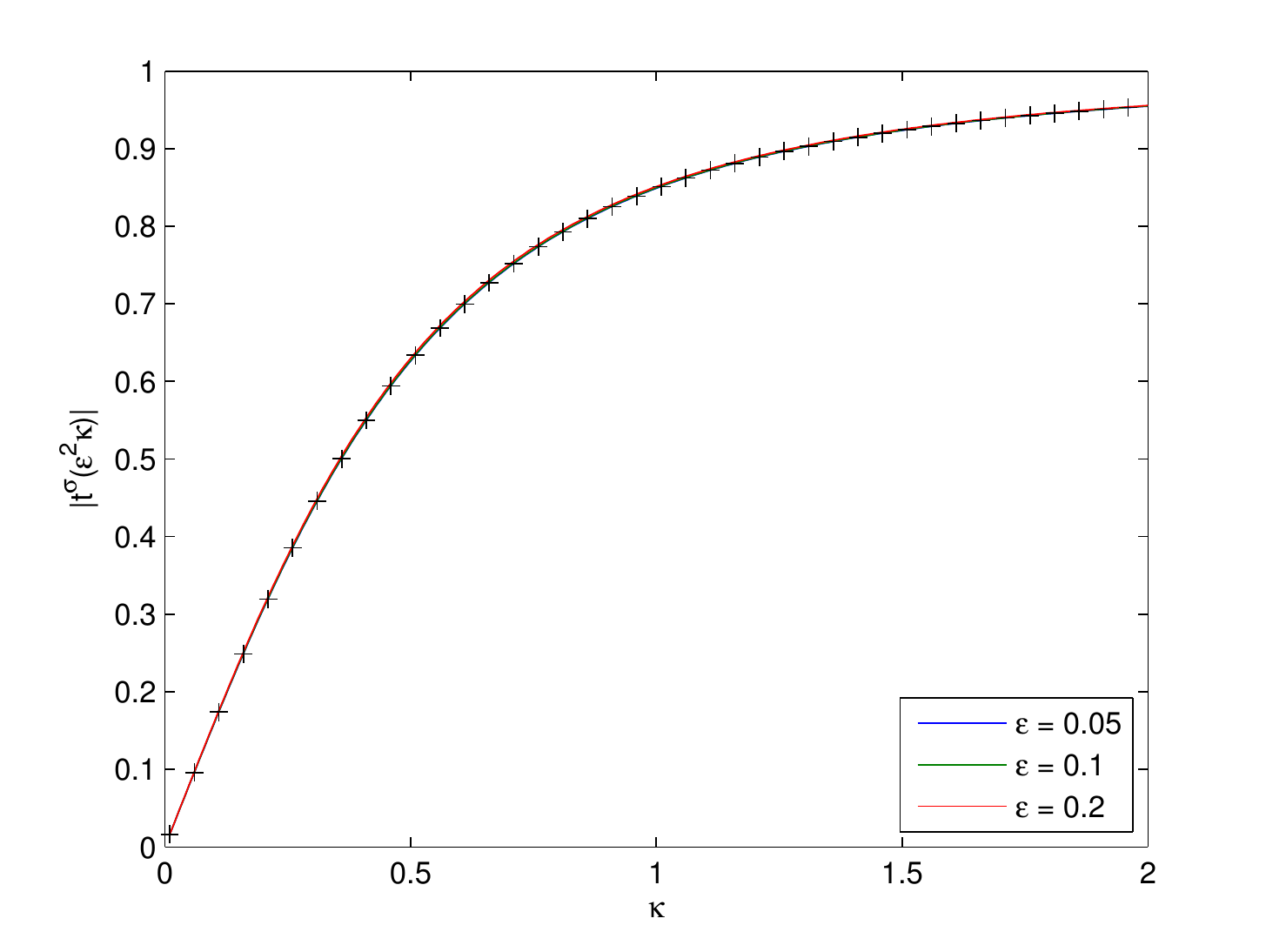}
\label{fig:subfigf}
}
\caption{Plots of potentials $V_\eps(x)$, (a), and the corresponding effective potential $\seff(x)$, (b). Transmission coefficients $t^{V_\eps}(k)$, (c), and $t^{\seff}(k)$, (d). Plots (e) and (f) show convergence of scaled transmission coefficients $t^{V_\eps}(\eps^2\kappa)$ and $t^{\seff}(\eps^2\kappa)$ to the transmission coefficent
$t^{\rm Dirac}(\kappa)=\frac{\kappa}{\kappa-\frac{i}{2}\int\Lambda_{\rm eff}}$ associated with the 
Dirac delta potential well of mass $\int\Lambda_{\rm eff}$. The cross markers in plots (e) and (f) correspond to values of $t^{\rm Dirac}(\kappa)$.}
\label{fig:T-of-k}
\end{figure}
To fix ideas, stick with the case $q_{\rm av}(x)\equiv0$ and thus, $H_{V_\eps}=H_{q_\eps}$, with $q_\eps(x)\equiv q(x,x/\eps)$. We comment below on the case where $q_{\rm av}$ is non-zero. We clarify the nature of low energy scattering by proving that there is an {\it effective potential well}:
\begin{equation}
\seff(x)\ =\ -\eps^2\Lambda_{\rm eff}(x), \label{sigma-eff-1}
\end{equation}
 such that
\begin{equation}
 t^{q_\eps}(k)\ - t^{\seff}(k) \to \ 0\ \ {\rm as}\ \eps\to0,\ {\rm uniformly\ in}\ k\in\RR ;\label{compare-ts1}
\end{equation}
see Theorem~\ref{thm:t-conv-real}, Corollary~\ref{cor:k-real}, and Theorem~\ref{thm:t-conv}, proved by a ``normal form'' type analysis in section~\ref{sec:hard}. Here, $\Lambda_{\rm eff}(x)$ is a positive and localized function defined in terms of the Fourier expansion of the 2-scale potential, $q(x,y)$:
\begin{equation}
\Lambda_{\rm eff}(x)\ =\ -\frac{1}{(2\pi)^2} \sum_{j\neq 0} \frac{|q_j(x)|^2}{\lambda_j^2}.
\label{Lambda-def-aper}
\end{equation}
For the periodic case, $q(x,y+1)=q(x,y)$, $\lambda_j=j,\ j\ne0$ and $\Lambda_{\rm eff}$ is given by:
\begin{equation}
\Lambda_{\rm eff}(x)\ =\ -\frac{1}{(2\pi)^2} \sum_{j\neq 0} \frac{|q_j(x)|^2}{j^2}\ =\ 
- \left\langle-\partial_y^{-2}q(x,y),q(x,y)\right\rangle_{L^2(S^1_y)}\ \ .
\label{Lambda-def}
\end{equation}
This particular choice of effective potential well is anticipated by a formal two-scale homogenization expansion. 
%
%
An example of a mean zero potential $V_\eps(x)=q_\eps(x)=q(x,x/\eps)$ and the associated effective potential is displayed in Figures~\ref{fig:subfiga} and ~\ref{fig:subfigb}. 
%
A clue to the source of non-uniformity in $k$ is offered by a result of Simon~\cite{Simon76}, applied to $\seff$, which implies that for $\eps$ small, the operator $H_\seff$, has a single negative eigenvalue:
\begin{equation} 
 E^{\seff}\ =\ \ -\frac{\eps^4}4 \left(\int_\RR \Lambda_{\rm eff}\ \right)^2\ +\ \mathcal{O}(\eps^6) .
\label{edge-eig1} \end{equation}
Since the eigenvalues of $H_V$ are associated with poles of $t^V(k)$ located on the positive imaginary axis (section~\ref{sec:quicksum}), 
 the eigenvalue $E^\seff$ is associated with a pole at
 \begin{equation}
 k^\seff(\eps)\ = i\frac{\eps^2}{2}\ \left(\ \int_\RR \Lambda_{\rm eff} \ \right)\ 
 +\ \mathcal{O}(\eps^4)
 \label{poleofseff}
 \end{equation}
The estimates of Theorem~\ref{thm:t-conv} and 
Corollary~\ref{cor:compare-t-qav-is-0}, comparing $t^{q_\eps}(k)$ to $t^{\seff}(k)$, in a complex neighborhood of $k=0$ for small $\eps$, enable us to conclude, via Rouch\'e's Theorem, that $t^{q_\eps}(k)$ has a pole $k^{q_\eps}(\eps)\approx k^{\seff}(\eps)$. It follows that $H_{q_\eps}$ has a bound state, $u_{E_{q_\eps}}(x)$, with energy
\begin{equation}
E^{q_\eps}\ =\ -\frac{\eps^4}4 \left(\int_\RR \Lambda_{\rm eff}\ \right)^2\ +\ \mathcal{O}(\eps^5) \ .
\label{small-eig}\end{equation}
 Moreover, $u_{E_{q_\eps}}(x)=\mathcal{O}\left(e^{-\sqrt{|E_{q_\eps}|}\ |x|}\right)$ as $|x|\to\infty$ (Corollary~\ref{cor:edge}).
Furthermore, by Corollary~\ref{cor:transm}, there is a universal scaled limit depending on a single parameter, $\int_\RR\Lambda_{\rm eff}$:
\[
 t^{q_\eps}(\eps^2\kappa)\ \to\ t^\star\left(\kappa;\int_\RR\Lambda_{\rm eff}\right) \ \equiv \ \frac{\kappa}{\ \kappa - \frac{i}{2} \int_\RR\Lambda_{\rm eff} \ }\ 
 {\rm as}\ \ \eps\to0 \quad \text{for } \kappa\ne \frac{i}{2}\ \int_\RR\Lambda_{\rm eff}\ .
\]
  Note that $t^\star\left(\kappa;\int_\RR\Lambda_{\rm eff}\right)$ is the transmission coefficient for the Schr{\"o}dinger operator with a Dirac-distribution potential well of total mass $\int_\RR \Lambda_{\rm eff}>0$:
 \[ H^\star \ \equiv \ -\partial_x^2-\left(\int_\RR \Lambda_{\rm eff}(\zeta) d\zeta\right)\times \delta(x).\]
 Figures~\ref{fig:subfige} and~\ref{fig:subfigf}, as well as Figure~\ref{fig:T-of-kappa}, illustrate this behavior.
 \begin{figure}
 \subfigure[$V_{1,\eps}(x) =\mathbf{1}_{[-1;1]}(x)10e^{\frac{-x^2}{1-x^2}}\cos\big(\frac{2\pi x}{\eps}\big)$, and effective potential, $\sigma_{1,\eps}\equiv -\eps^2\Lambda_1$, $\eps=0.1$]{
\includegraphics[width=0.48\textwidth]{./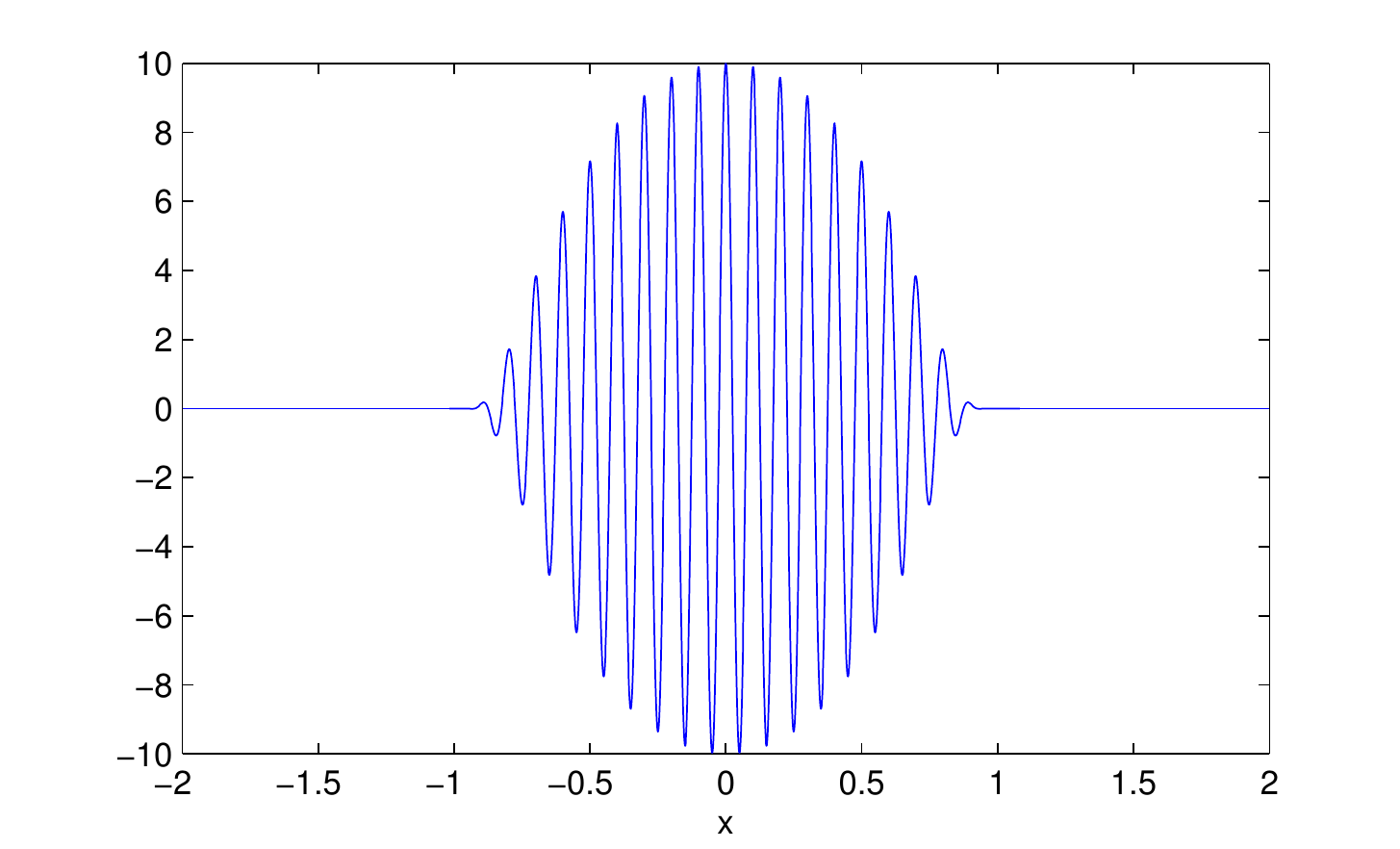}
\includegraphics[width=0.48\textwidth]{./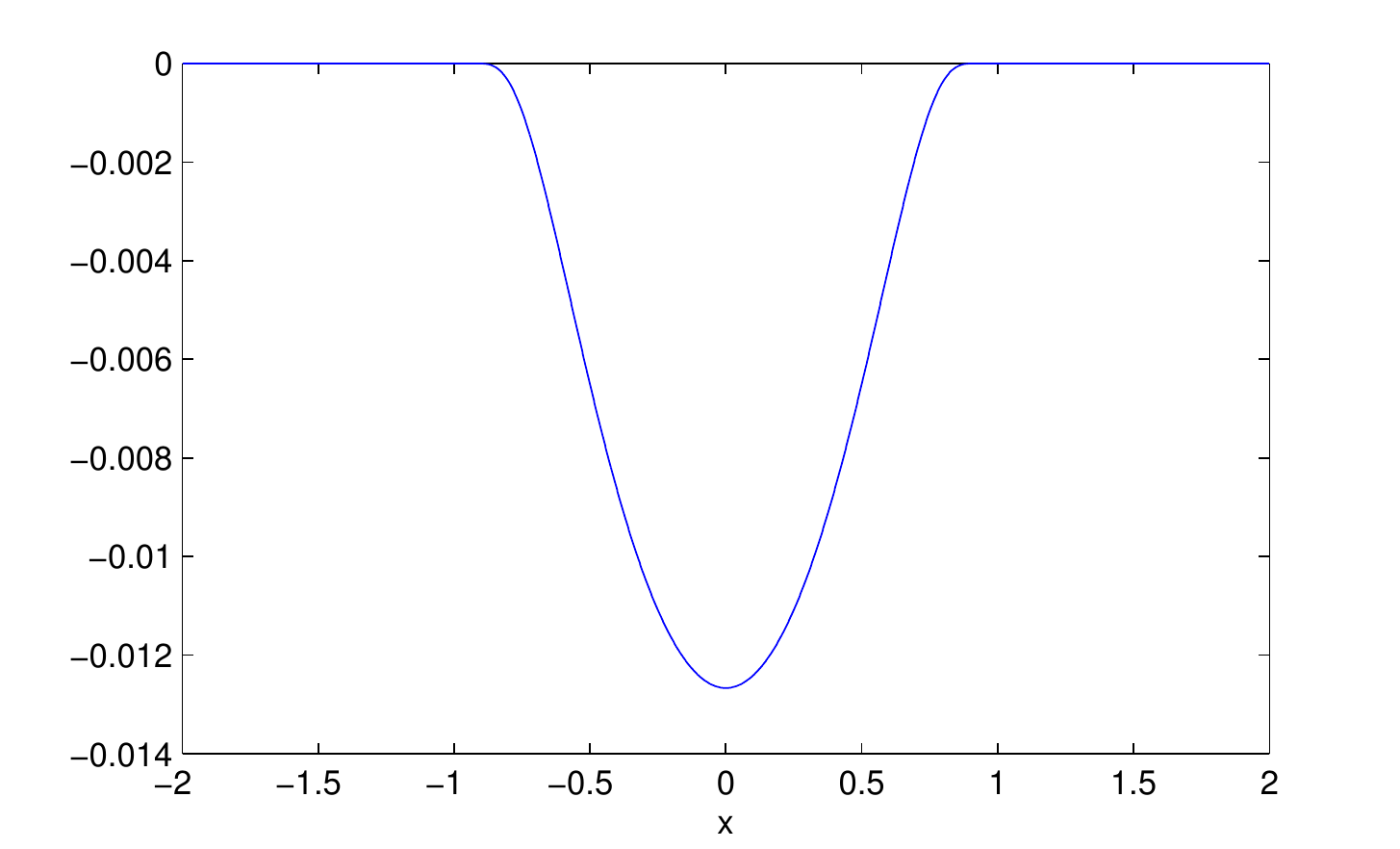}
\label{fig:subfig2a}
}
\subfigure[$V_{2,\eps}(x) \! =\! 10 \Big(\mathbf{1}_{[-1;0]}(x)e^{\frac{-(2x+1)^2}{1-(2x+1)^2}}\! +\! \mathbf{1}_{[0;1]}(x)e^{\frac{-(2x-1)^2}{1-(2x-1)^2}}\Big) \cos\big(\frac{2\pi x}{\eps}\big)$ and effective potential ${\sigma_{2,\eps}\equiv-\eps^2\Lambda_2}$, ${\eps=0.1}$]{
\includegraphics[width=0.48\textwidth]{./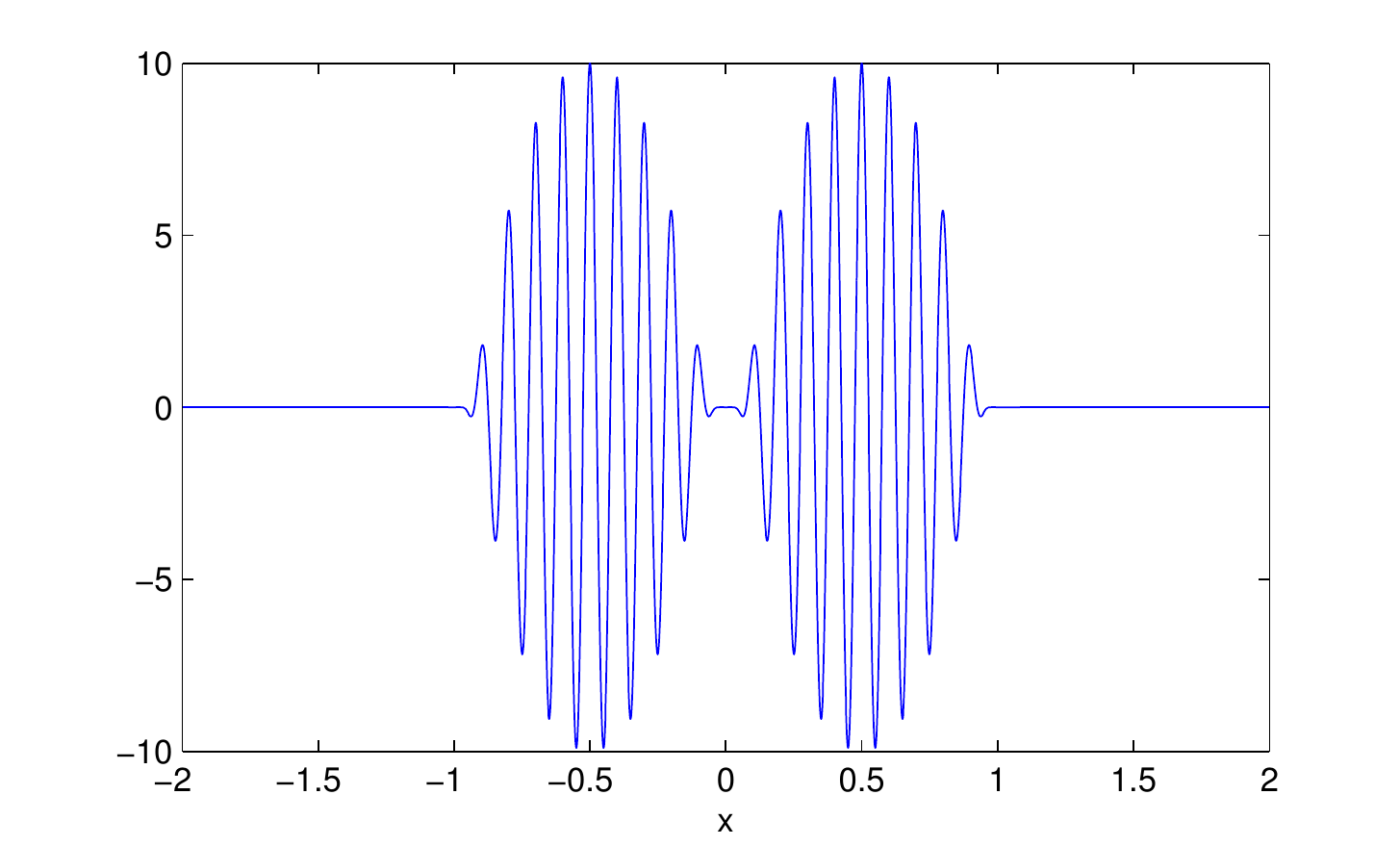}
\includegraphics[width=0.48\textwidth]{./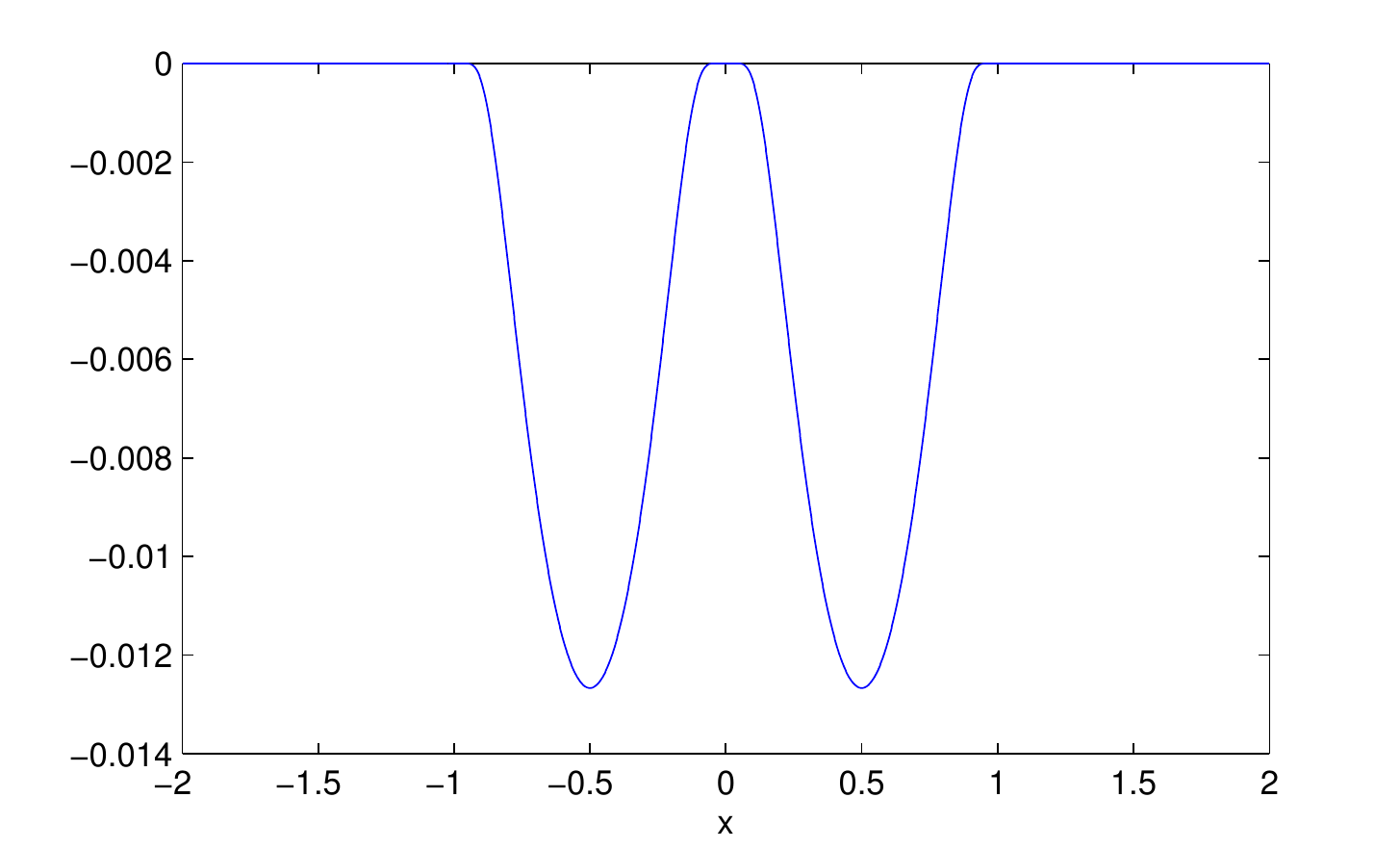}
\label{fig:subfig2b}
}
\subfigure[$|t^{V_{1,\eps}}(\eps^2\kappa)|$, $\kappa \in (0;2)$, $\eps$ varying]{
\includegraphics[width=0.48\textwidth]{./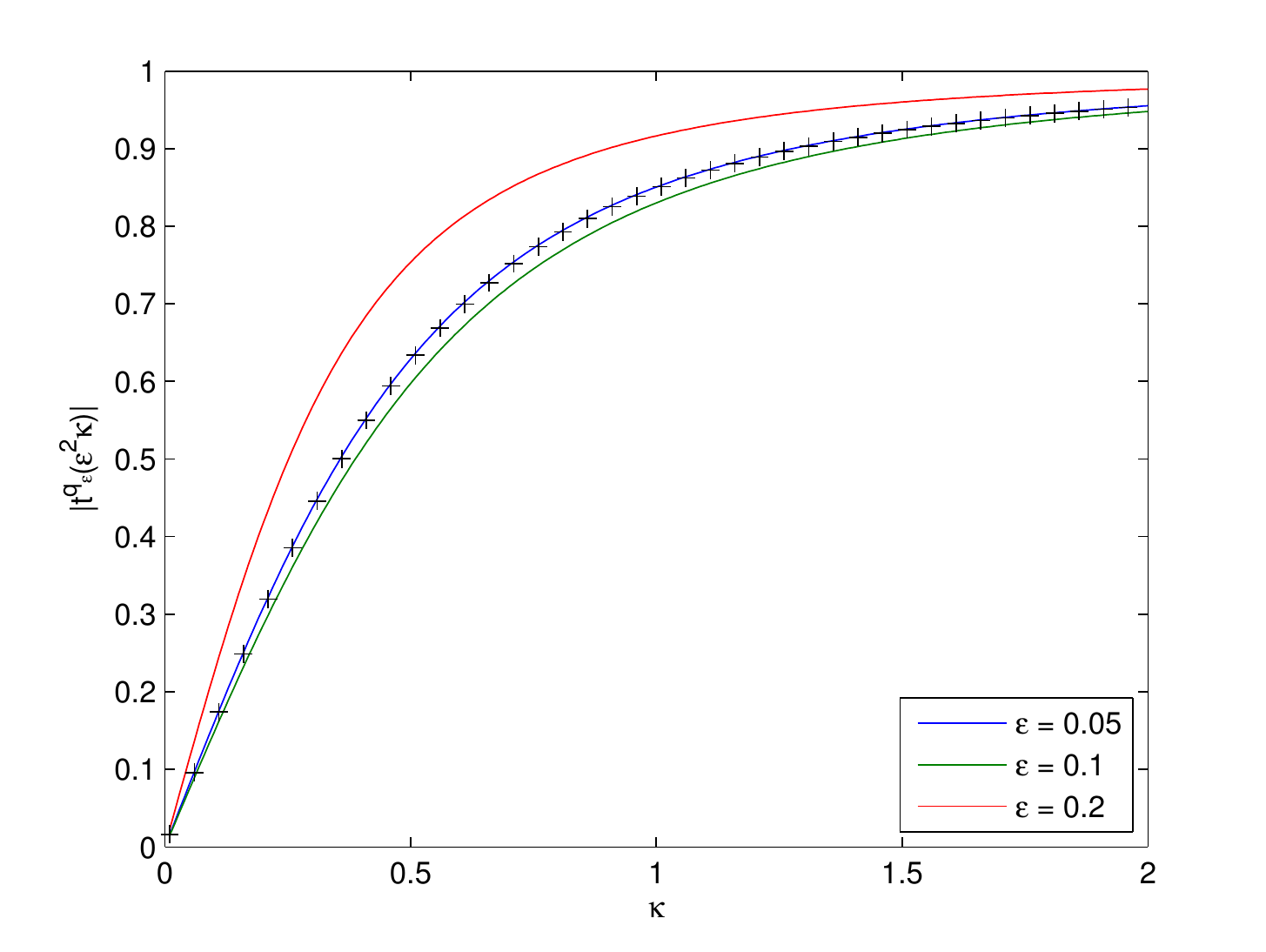}
\label{fig:subfig2c}
}
\subfigure[$|t^{V_{2,\eps}}(\eps^2\kappa)|$, $\kappa \in (0;2)$, $\eps$ varying]{
\includegraphics[width=0.48\textwidth]{./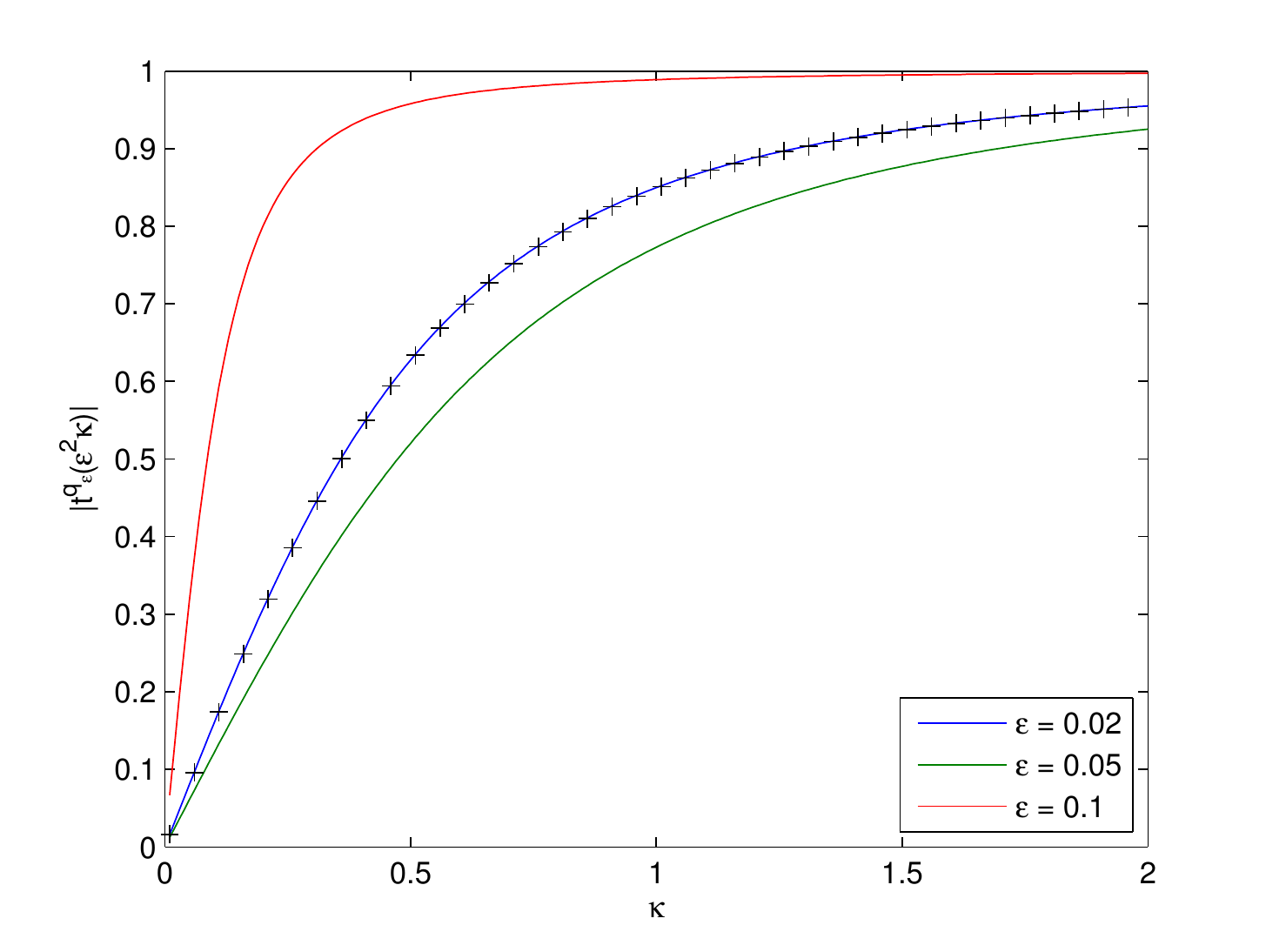}
\label{fig:subfig2d}
}
\caption{Plots (a) and (b) are of two mean zero potentials, $V_{1,\eps}$ and $V_{2,\eps}$ (left), 
 and effective potentials $\sigma^\eps_{1,{\rm eff}}$ and $\sigma^\eps_{2,{\rm eff}}$ (right).
 Potentials chosen so that: $\int\Lambda_{1,{\rm eff}}=\int\Lambda_{2,{\rm eff}}$. Plots (c) and (d) illustrate universality of scaled limits: $t^{V_\eps}(\eps^2\kappa)$ and $t^{\seff}(\eps^2\kappa)$. 
The cross markers correspond to the scaled limit: 
 $t^\star(\kappa)=\frac{\kappa}{\kappa-\frac{i}{2}\int\Lambda_{1,{\rm eff} } }=\frac{\kappa}{\kappa-\frac{i}{2}\int\Lambda_{2,{\rm eff} } }$}
\label{fig:T-of-kappa}
\end{figure} 
 
 A further consequence concerns the large-time dispersive character of solutions to the time-dependent Schr\"odinger equation:
 \begin{equation}\label{eq:schrod0}
i \partial_t \psi \ =\ -\ \partial_x^2 \psi \ + \ q(x,x/\eps)\psi,\ \ \psi(0,x) \ = \ \psi_0\ .
\end{equation}
 We have the following time-decay estimate (Theorem~\ref{cor:schrod}) for sufficiently localized initial conditions, $\psi_0$ , in the continuous spectral part of $H_{q_\eps}$, {\it i.e.}
$ u_{E_{q_\eps}}\perp_{L^2}\psi_0$:
\begin{equation}
 \left(1+|x|^3\right)^{-1}\ \left| \psi(x,t)\ \right|\ \le\ 
 \frac{C}{t^{1/2}}\ \frac{1}{1 + \eps^4 \left(\int_\mathbb{R} \Lambda_{\rm eff}\right)^2 t\ }\ 
 \int_\mathbb{R} \left(1+|\zeta|^3\right) \left|\psi_0(\zeta)\right|\ \dd y \ .
\label{ld-estimate}
\end{equation}
Therefore the effect of the oscillatory perturbation on the rate of dispersion is only seen on the time scale $t\gtrsim\eps^{-4}$.

The above results follow from the non-generic low energy behavior of the average potential $V\equiv0$. Thus we ask:

\noindent {\it Question:\ Are there non-trivial potentials, $V(x)\equiv q_{\rm av}(x)$, with low energy behavior analogous to $V\equiv0$, 
such that $V_\eps = q_{\rm av}(x) + q_\eps(x)$ exhibits similar behavior?}

\noindent The answer is yes! Such examples need to exhibit the behavior 
 \[|t^{q_{\rm av}}(k)|\to |t^{q_{\rm av}}(0)|\ne0\ \ {\rm as}\ \ k\to0.\]
 How such non-generic behavior arises is discussed in section~\ref{sec:refless}. 
 The class of {\it reflectionless potentials}, for which one has $|t(k)|\equiv 1$ for all $k\in\RR$, is a large family of such examples.
 %
%
 Our main Theorem~\ref{thm:t-conv} holds for general $q_{\rm av}$, and shows that the low energy behavior is determined by the effective potential: 
 \begin{equation}
 q_{\rm av}(x)\ +\ \seff(x)\ =\ q_{\rm av}(x)\ -\ \eps^2\Lambda_{\rm eff}(x)\ .
 \nn\end{equation}
Therefore, if $q_{\rm av}$ is a reflectionless potential, then $t^{q_{\rm av}+\seff}(k)$ has a pole, $k^{q_{\rm av}+\seff}(\eps)$, situated on the positive imaginary axis, and of size $\O(\eps^2)$. An application of Rouch\'e's Theorem yields that $t^{q_{\rm av}+q_\eps}(k)$, has a pole near $k^{q_{\rm av}+\seff}(\eps)$ and a bound state
\[ E^{q_{\rm av}+q_\eps}(\eps)\approx\ E^{q_{\rm av}+\seff}(\eps)\ =\ \left[k^{q_{\rm av}+\seff}(\eps)\right]^2\ <0;\ \  \textrm{see Corollary~\ref{cor:compare-t-qav-is-reflectionless}.}\]

 \subsection{Outline of the paper}
 
 In section~\ref{sec:quicksum} we review the prerequisite one-dimensional scattering theory.\ Section~\ref{sec:main-results} contains statements of
 our main results and is structured as follows:
 \begin{enumerate}[(1)]
 \item  Detailed hypotheses on the class of potentials: $V_\eps(x)=q_{\rm av}(x)+q(x,x/\eps)$ are given in Hypotheses {\bf (V)} at the beginning of section~\ref{sec:main-results}.
\item We consider the case where $q_{\rm av}$ is generic and the case where $q_{\rm av}$ is non-generic. As indicated above, the non-generic case, {\it i.e.} $q_{\rm av}\equiv0$, is of greatest interest and we emphasize this case.
\item For non-generic $q_{\rm av}$, Theorem~\ref{thm:t-conv} and Corollary~\ref{cor:compare-t-qav-is-0} give precise estimates on the difference $t^{q_{\rm av}+q_\eps}(k)-t^{q_{\rm av}+\seff}(k)$, for $k$ in a complex neighborhood of zero, and $\eps\to0$. 
 \item For $q_{\rm av}=0$, Corollary~\ref{cor:transm} gives a universal form of the scaled limit of $t^{q_{\rm av}+q_\eps}(\eps^2\kappa)$ as $\eps\to0$. This limit depends on a single parameter, given by the integral of the effective potential.
 \item For $q_{\rm av}=0$, Corollary~\ref{cor:edge} states the potential $q_{\rm av}+q_\eps$, has a bound state with negative energy $\approx\mathcal{O}(\eps^4)$, near the edge of the continuous spectrum.
 \item In subsection~\ref{sec:refless} we present non-trivial (non-indentically zero) potentials, $q_{\rm av}$, which are non-generic, for which the above results for $q_{\rm av}\equiv0$ also apply. We work out the details for ``one-soliton'' potentials $q_{\rm av,\rho}(x)=-2\rho^2{\rm sech}^2(\rho(x-x_0))$, for which $H_{q_{\rm av,\rho}}$ has exactly one negative eigenvalue at $E_0(\rho)=-\rho^2$ and continuous spectrum extending from zero to positive infinity. In this example, our result shows that $H_{q_{\rm av,\rho}+q_\eps}$ has an eigenvalue of order $\mathcal{O}(\eps^4)$, which bifurcates from the edge of the continuous spectrum.
 Specifically,
 \begin{equation}
E^{q_{\rm av}+q_\eps} \ \approx \ -\frac{\eps^4}4 \left(\int_\RR \tanh^2(y)\ \Lambda_{\rm eff}(y)\ \dd y\ \right)^2;
\label{edge-eig-soliton}
\end{equation}
compare with~\eqref{small-eig}.
A second eigenvalue is $\mathcal{O}(\eps^2)$ distant from $E_0(\rho)$.
\item In subsection~\ref{sec:genericcase} we deal with the relatively simple case of highly oscillatory perturbations of a generic potential, $q_{\rm av}$.
 \end{enumerate}
 In section~\ref{sec:k-real}, we combine our precise analysis for bounded $k$ with the relatively simple analysis when $k\in\RR$ is bounded away from zero, and obtain control on the difference $t^{q_\eps}(k)-t^{\seff}(k)$, uniformly for $k\in\RR$.\\ 
 In section~\ref{sec:time-decay} our results on the high and low energy behavior of $t^{q_\eps}(k)$ are used to prove the local energy time-decay estimate~\eqref{ld-estimate}; Theorem~\ref{cor:schrod}.\\
 The proof of Theorem~\ref{thm:t-conv}, and the emergence of the effective potential, $\seff(x)$, are presented in section~\ref{sec:hard}. \
Appendix~\ref{sec:jost-tools} contains detailed estimates on Jost solutions for general localized potentials in an appropriate domain in the complex plane. Appendix~\ref{sec:seff-tools}
 presents a discussion of the potential $q_{\rm av}(x)+\seff(x)=q_{\rm av}(x)-\eps^2\Lambda_{\rm eff}(x)$.
 
 \subsection{Remarks on related work}

\begin{enumerate}[(1)]
\item Detailed and rigorous asymptotic expansions of $t^{q_{\rm av}+q_\eps}(k)$ were
 derived in~\cite{DucheneWeinstein:11} by a method developed in~\cite{GolowichWeinstein05}.
In this work, singular potentials were also admitted. Potentials with singularities, {\it e.g.} jump discontinuities, Dirac delta singularities, give rise to interface-effects which require the inclusion of interface correctors, not captured by standard bulk homogenization theory, in the expansions. For generic potentials \ these expansions hold for any fixed $k\in\RR$ and $\eps\downarrow0$.
\item As discussed, our results are related to those of Simon~\cite{Simon76} on shallow depth potentials with negative or zero average. Our results can be viewed as a generalization to a larger class of perturbations, admitting high-contrast and rapidly oscillatory potentials, {\em i.e} potentials which converge only weakly to their mean.
\item We conjecture, motivated by~\cite{Simon76}, that in dimension 2, there is a discrete eigenvalue which is exponentially small in $\eps$; and that in dimension 3, there exists no bound state for $\eps$ sufficiently small.
\item {\it E. Schr\"odinger meets P. Kapitza:}\ 
There is an interesting connection between our results and
a phenomenon in Mechanics known as the {\it Kapitza Pendulum}. Very generally, this refers to the stabilization of an unstable equilibrium of a dynamical system through time-dependent parametric forcing, {\it i.e.} the stabilization of the classical inverted pendulum 
\cite{Landau-Lifshitz:76,Levi-Weckesser:95}.
\end{enumerate}

\subsection{Notation, norms and function spaces}\label{sec:def}

Various norms are introduced in the analysis of the transmission coefficient, Jost solutions {\it etc}. These norms involve spatial weights of the potential which are algebraic, when we analyze scattering properties 
 for $k\in \mathbb{R}$, and exponential, when we consider these properties for $k\in\mathbb{C}$. Our convention throughout is that spaces with algebraic spatial weights are denoted with calligraphic upper-case letters, {\it e.g.} $\mathcal{W}^{k,p}_{\gamma}$, and spaces with exponential spatial weights are denoted with ordinary upper-case Roman letters, {\it e.g.} $W^{k,p}_\beta$. The parameters $\gamma$ and $\beta$ define the spatial weight. 
 
We denote by $ \mathcal{L}^1_{\gamma} (\RR)$ the space of measurable functions $g$ such that
\[
\big| g \big|_{\mathcal{L}^1_{\gamma}} \ = \ \int_\RR | g (x) | (1 + |x|)^\gamma \dd x\ <\ \infty .
\]
The space of functions, $g$, whose derivatives up to order $n$ are in $ \mathcal{L}^1_{\gamma} $ is denoted $\mathcal{W}^{n,1}_{\gamma}$ and the associated norm is
\[
\big| g \big|_{\mathcal{W}^{n,1}_{\gamma}} \ \equiv \ \sum_{l=0}^n \big| \partial^l g\big|_{\mathcal{L}^1_{\gamma}}.
\]

For a fixed $\beta>0$, we denote by $L^{\infty}_\beta$ the space of measurable functions $g$ defined on $\RR$ such that 
\[
\big| g \big|_{L^{\infty}_{\beta}} \ \equiv \ \big|e^{\beta \cdot} g\big|_{L^\infty}\ \equiv\
 {\rm ess\ sup}_{x\in\RR}\ e^{\beta x} \left| g(x) \right| \ < \ \infty.
\]
$W^{n,\infty}_{\beta}$ denotes the space of the functions $g$ defined on $\RR$, whose derivatives up to order $n$ are in $L^{\infty}_{\beta}$ with associated norm 
\[
\big| g \big|_{W^{n,\infty}_{\beta}} \ \equiv \ \sum_{l=0}^n \big| \partial^l g\big|_{L^\infty_{\beta}}.
\]
For a function, $V$, of the form
\[
V(x,y)\ = \ q_{\rm av}(x)+ q(x,y) \ = \ q_{\rm av}(x) \ + \ \sum_{j\in\mathbb{Z}\setminus\{0\}}\ q_j(x)\ e^{2\pi i\lambda_j y},
\]
we introduce the following norms:
\begin{align*}
& \textrm{exponentially weighted:}\ & \big\bracevert V \big\bracevert \ \ \, &\equiv \ \big|q_{\rm av}\big|_{W^{1,\infty}_{\beta}} \ + \ \sum_{j\in\mathbb{Z}\setminus\{0\}} \big|q_j\big|_{W^{3,\infty}_{\beta}} \ ;\\
& \textrm{algebraically weighted:}\ & \big\bracevert\hspace{-6pt}\big\bracevert V \big\bracevert\hspace{-6pt}\big\bracevert \ & \equiv \ \big|q_{\rm av}\big|_{\mathcal{W}^{1,1}_2} \ + \ \sum_{j\in\mathbb{Z}\setminus\{0\}} \big|q_j\big|_{\mathcal{W}^{3,1}_3}\ .
\end{align*}

\section{Review of 1d scattering theory}\label{sec:quicksum}

In this section we briefly review some of the basics of scattering theory
 for the one-dimensional Schr\"odinger equation:
 \begin{equation}
\left(-\frac{d^2}{dx^2} \ + \ V(x) \ - \ k^2\right) \ u(x;k) \ = \ 0,
\label{schrod-sigma}
\end{equation}
for localized potentials, $V(x)$, assumed to satisfy 
\[V\in \mathcal{L}^1_{2}(\RR)\ =\ \{V: (1+|x|)^2 V(x)\in L^1(\RR)\} .\]
 In particular, in section~\ref{sec:frt} we discuss the Jost solutions, $f^V_\pm(x;k)$, and the reflection and transmission coefficients, $r^V_\pm(k)$ and $t^V(k)$.
An extensive discussion can be found in~\cite{DeiftTrubowitz79},~\cite{ReedSimonIII},~\cite{Newton86}. 
Section~\ref{sec:generic} explains what is meant by a {\it generic potential}.
 Finally, in section~\ref{sec:volterra} we introduce some important tools enabling us to compare the transmission coefficients of two different potentials. This is based on the Volterra integral equation for the Jost solution for a potential, $V$, viewed as a perturbation of a second potential, $W$.

\subsection{The Jost solutions, and reflection and transmission coefficients}\label{sec:frt}

For $k\in\RR$, introduce $f^{V}_\pm(x;k)$, the unique solutions of~\eqref{schrod-sigma} 
with
\begin{equation}
 f^{V}_\pm(x;k) \ \sim \ e^{\pm ikx}, \quad {\rm as}\ \ \ x\to \pm \infty.\label{jost-bc}
 \end{equation}

Observe from the asymptotics as $x\to\infty$, we have $\mathcal{W}[f^V_+(\cdot;k),f^V_+(\cdot;-k)]=2ik$, where $\mathcal{W}[h_1,h_2]$ denotes the Wronskian of functions $h_1(x)$ and $h_2(x)$:
\begin{equation}
\mathcal{W}[h_1,h_2]= h_1(x)h_2'(x)-h_2(x)h_1'(x).
\label{wronskian}
\end{equation}
Therefore, for $k\in\RR\setminus\{0\}$, the set $\{f^V_+(x;k),f^V_+(x;-k)\}$ is a linearly independent set of solutions of~\eqref{schrod-sigma}.
 
The transmission coefficients, $t^V_\pm(k)$, and the reflection coefficients $r_\pm^V(k)$ are defined via the algebraic relations, among the Jost solutions $f^V_\pm(x;k)$:
\begin{align}
\label{eq:defrt+} f^V_+(x;k) \ &\equiv \ \frac{r^V_+(k)}{t^V_+(k)}f^V_-(x;k)\ +\ \frac{1}{t^V_+(k)}f^V_-(x;-k),\\
\label{eq:defrt-} f^V_-(x;k) \ &\equiv \ \frac{r^V_-(k)}{t^V_-(k)}f^V_+(x;k)\ +\ \frac{1}{t^V_-(k)}f^V_+(x;-k).
\end{align}
One can check that $\mathcal{W}[f_+^V,f_-^V] \ = \ -2ik[t^V_-(k)]^{-1} \ = \ -2ik[t^V_+(k)]^{-1}$, 
and therefore we write
\begin{equation}\label{eq:Wronskian-vs-transmission}
\mathcal{W}[f_+^V,f_-^V] \ = \ -\frac{2ik}{t^V(k)}\ ,
\end{equation}
with $t^V(k) \ \equiv \ t^V_-(k) \ = \ t^V_+(k)$. Furthermore, one has
\begin{equation}
\left| t^V(k) \right|^2\ +\ \left| r^V_\pm(k) \right|^2\ =\ 1,\ \ k\in\RR.
\label{tr-energy}
\end{equation}

The Jost solutions, $f_\pm^V$,  and scattering coefficients, $t^V$ and $r^V_\pm$, can be analytically extended into the upper-half complex $k-$plane. Note that if $k_1$ is a pole of $t^V(k)$, with $\Im(k_1)>0$, then ${\mathcal W}[f_+^V,f_-^V](k_1)=0$.
In this case, $f^V_+(x;k_1)$ and $f^V_-(x;k_1)$ are proportional and therefore decay exponentially as $x\to\pm\infty$. Thus, $k_1^2$ is an $L^2-$eigenvalue of $H_V$. 

If the potential $V(x)$ is exponentially decaying as $x$ tends to infinity, then the Jost solutions can be analytically extended into the lower half complex $k-$plane. More precisely,
 if $V\in L^\infty_\beta$ (see Section~\ref{sec:def}), then $f^V_\pm(x;k)$ are defined for $\Im(k)>-\beta/2$ as the unique solutions of the Volterra integral equations
\begin{align}\label{eq:Volterra}
f^{V}_+(x;k) \ &= \ e^{ikx} \ + \ \int_x^\infty \frac{\sin(k(y-x))}{k} V(y) f^{V}_+(y;k) \dd y, \nn \\
f^{V}_-(x;k) \ &= \ e^{-ikx} \ - \ \int_{-\infty}^x \frac{\sin(k(y-x))}{k} V(y) f^{V}_-(y;k) \dd y.
\end{align}
Detailed bounds on $f^V_\pm(x;k)$ and their derivatives are presented in Appendix~\ref{sec:jost-tools}. 

\medskip

Finally, note the following consequences of $V(x)$ being real-valued, the uniqueness of the Jost solutions as defined above, and~\eqref{eq:defrt+}--\eqref{eq:defrt-}:
\begin{equation}\label{reality-condition}
f^{V}_\pm(x;-\overline{k}) \ = \ \overline{f^{V}_\pm(x;k)}, \qquad t^{V}(-\overline{k}) \ = \ \overline{t^{V}(k)}, \qquad r^{V}_\pm(-\overline{k}) \ = \ \overline{r^{V}_\pm(k)}.
\end{equation}
In particular, $f^V_\pm(x;0)$, $t^{V}(0)$, $r^{V}_\pm(0)$ are real. 

\subsection{Generic and non-generic potentials}\label{sec:generic}

Using the decay hypotheses of potential $V\in L^\infty_\beta$ and the method of~\cite{DeiftTrubowitz79}, page~145, one can check that the transmission and reflection coefficients are well-defined by~\eqref{eq:defrt+}--\eqref{eq:defrt-} for $|\Im(k)|<\beta/2$, and satisfy the important relations, which follow from~\eqref{eq:Wronskian-vs-transmission}
 and~\eqref{eq:Volterra}:
\[
\frac{1}{t^V(k)} \ = \ 1 \ - \ \frac{1}{2ik}I^V(k), \qquad \text{ thus} \quad
\mathcal{W}[f_+^V,f_-^V](k)\ =\  -2ik\ +\ I^V(k),\]
where  $ I^V(k) \ \equiv \ \int_{-\infty}^\infty V(y)e^{-iky}f^V_+(y;k)\dd y$.
Equivalently, one has
\begin{equation}
t^V(k) \ =\ -\frac{2ik}{\mathcal{W}[f_+^V,f_-^V](k)}\ =\ \frac{2ik}{2ik-I^V(k)}.
\label{tsig-Isig}
\end{equation}

Recall that if $V(x)\equiv 0$, then $t^V(k)\equiv 1$. Moreover, if 
\begin{equation}
\label{generic-def}
 I^V(0) \ = \ \mathcal{W}[f_+^V,f_-^V](0)\ =\ \int_{-\infty}^\infty V(y)f^V_+(y;0)\dd y\ne0, \
 \end{equation}
 then by continuity of $t^V(k)$ and~\eqref{tsig-Isig}, one has 
 \begin{equation}
 t^V(0) = \lim_{k\to0} t^V(k)\ =\ 0.
 \label{lim-t}\end{equation}
The case where~\eqref{generic-def} and therefore~\eqref{lim-t} holds is typical. Indeed, 
 it has been shown in Appendix~2 of~\cite{Weder00} that for a dense subset of $\mathcal{L}^1_{1}$, one has $I^V(0)\ne0$; see also~\cite{DeiftTrubowitz79} and~\cite{Newton86}. Thus we say that~\eqref{generic-def} and~\eqref{lim-t} holds {\it generically in the space of potentials}.
 \begin{Definition}[Generic potentials] \label{def:generic}
 We say that a potential, $V$, is {\em generic} if one has $ t^V(0) =0$. Equivalently, $V$ is generic if and only if
 \[ \frac{k}{t^V(k)} \longrightarrow \frac{I^V(0)}{2i} \neq 0,\quad \text{ as } k\to 0.\] 
 \end{Definition}
 Note that in the non-generic case, where $\mathcal{W}[f_+^V,f_-^V](0)=0$, we have that Jost solutions $f^V_\pm(x;k)$ satisfy
 $f^V_\pm(x;0)\sim1$ as $x\to\pm\infty$ and are multiples of one another. Thus, non-genericity is equivalent to the existence of a globally bounded solution of the Schr\"odinger equation at zero energy. Such states are sometimes referred to as zero energy resonances. The simplest example is $V\equiv0$
 where $f^0_\pm(x;k)=e^{\pm i kx}$ and $f^0_\pm(x;0)\equiv 1$.

\subsection{Relations between $f_\pm^V$ and $f_\pm^W$ for general $V$ and $W$}
\label{sec:volterra}
Our approach is based on associating with $V_\eps(x)=q_{\rm av}(x)+q_\eps(x)$ a more accurate (than $q_{\rm av}$) 
 minimal model or {\it normal form}, $V_{\eps,{\rm eff}}(x)=q_{\rm av}(x)+\seff(x)$, of the asymptotic scattering properties for $k$ bounded and $\eps\to0$. An important tool will then be to compare the Jost solutions associated with the potential, $V=V_\eps$, with those of some family of potentials, $W=q_{\rm av}+\sigma$, parametrized by $\sigma$, which is to be determined. This section develops the necessary tools for this comparison.

In the Volterra equation~\eqref{eq:Volterra} we write $f^V_\pm(x;k)$ as a perturbation of the states $e^{\pm ikx}$, which lie in the kernel of $-\partial_x^2-k^2$. In the following proposition, we generalize this formula by viewing $f^V_\pm(x;k)$ as a perturbation of the Jost solutions $f^{W}_\pm(x;k)$ for the problem:
\[ \left(-\frac{d^2}{dx^2} \ + \ W \ - \ k^2\right) \ u \ = \ 0.\]

\begin{Proposition}\label{prop:q2s}
Let $V,W\in L^\infty_\beta$ and and let $f^{V}_\pm,f^{W}_\pm$ denote the associated Jost solutions. Then for $|\Im(k)|<\beta/2$, one has
\begin{align}
f^{V}_+(x;k) \ &= \ \alpha_+[V,W] \ f^W_+(x;k) \ + \ \beta_+[V,W] \ f^W_-(x;k) \nn \\
f^{V}_-(x;k) \ &= \ \alpha_-[V,W] \ f^W_+(x;k) \ + \ \beta_-[V,W] \ f^W_-(x;k),\label{VW-relate}
\end{align}
with $\alpha_\pm[V,W](x;k)$ and $\beta_\pm[V,W](x;k)$ defined by
\begin{align}
\alpha_+[V,W] \ &\equiv \ 1 + \int_x^\infty \frac{f^W_- (V -W) f^{V}_+}{\mathcal{W}[f^W_+ ,f^W_-]} \dd y , & 
\beta_+[V,W] \ &\equiv \ -\int_x^\infty \frac{f^W_+ (V -W) f^{V}_+}{\mathcal{W}[f^W_+ ,f^W_-]} \dd y,\label{eq:alphabeta+} \\
\alpha_-[V,W] \ &\equiv \ -\int_{-\infty}^x \frac{f^W_- (V -W) f^{V}_-}{\mathcal{W}[f^W_+ ,f^W_-]} \dd y, & 
\beta_-[V,W] \ &\equiv \ 1 + \int_{-\infty}^x \frac{f^W_+ (V -W) f^{V}_-}{\mathcal{W}[f^W_+ ,f^W_-]} \dd y. \label{eq:alphabeta-}
\end{align}
Equivalently, one has the Volterra equation
\begin{align}\label{eq:Volterra1}
f^{V}_+(x;k)  &=  f^W_+(x;k)  +  \int_x^\infty \frac{f^W_+(x;k)f^W_-(y;k)-f^W_-(x;k)f^W_+(y;k)}{\mathcal{W}[f^W_+ ,f^W_-]} (V -W) f^{V}_+(y;k) \dd y, \\
f^{V}_-(x;k)  &=  f^W_-(x;k)  -  \int_{-\infty}^x \frac{f^W_+(x;k)f^W_-(y;k)-f^W_-(x;k)f^W_+(y;k)}{\mathcal{W}[f^W_+ ,f^W_-]} (V -W) f^{V}_-(y;k) \dd y. \nn
\end{align}
\end{Proposition}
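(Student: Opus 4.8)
The plan is to prove the identity by variation of parameters, using the pair $\{f^W_+(\cdot;k),f^W_-(\cdot;k)\}$ as a basis of solutions of $(H_W-k^2)u=0$ and treating $f^V_+$ as a solution of an inhomogeneous problem whose right-hand side is regarded as a known source. Indeed, since $(H_V-k^2)f^V_+=0$, one has
\[
(H_W-k^2)\,f^V_+ \ = \ (W-V)\,f^V_+ ,
\]
so that $f^V_+$ is a particular solution of the inhomogeneous equation $(H_W-k^2)u=(W-V)f^V_+$. First I would assemble from $f^W_\pm$ the Green's kernel
\[
G(x,y) \ = \ \frac{f^W_+(x;k)\,f^W_-(y;k)-f^W_-(x;k)\,f^W_+(y;k)}{\mathcal{W}[f^W_+,f^W_-]} ,
\]
which is exactly the kernel appearing in \eqref{eq:Volterra1}; this requires $\mathcal{W}[f^W_+,f^W_-](k)\neq0$, i.e.\ that $k$ not be a pole of $t^W$.

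The key computation is to record the elementary properties of $G$ as a function of $x$: it solves $(H_W-k^2)G(\cdot,y)=0$ away from $x=y$, it is continuous across $x=y$ with $G(y,y)=0$, and its first derivative jumps according to $\partial_x G(x,y)\big|_{x=y}=-1$, which follows from \eqref{wronskian} and the definition of $\mathcal{W}[f^W_+,f^W_-]$. Differentiating twice under the integral sign then yields, for an admissible source $h$,
\[
(H_W-k^2)\left[\ \int_x^\infty G(x,y)\,h(y)\dd y\ \right]\ =\ -\,h(x).
\]
Applying this with $h=(V-W)f^V_+$ shows that the right-hand side $F(x)$ of the first line of \eqref{eq:Volterra1} satisfies $(H_W-k^2)F=(W-V)f^V_+$, whence $(H_W-k^2)(F-f^V_+)=0$. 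Because the integral in $F$ is taken over $[x,\infty)$, the boundary condition is pinned at $+\infty$: the integral term is negligible relative to $e^{ikx}$ and $f^W_+(x;k)\sim e^{ikx}$, so $F$ inherits the normalization $F(x)e^{-ikx}\to1$, matching \eqref{jost-bc} for $f^V_+$.

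I would then conclude $F\equiv f^V_+$ — equivalently, that $f^V_+$ itself solves the Volterra equation \eqref{eq:Volterra1} — by a uniqueness argument in the same spirit as \eqref{eq:Volterra}: the Volterra equation has a unique solution with the prescribed behavior at $+\infty$, constructed from a convergent Neumann series, and that solution solves $(H_V-k^2)u=0$ with $u\sim e^{ikx}$, hence equals the Jost solution $f^V_+$. The algebraic form \eqref{VW-relate} then follows immediately: since $f^W_\pm(x;k)$ do not depend on the integration variable $y$, they factor out of the integral in \eqref{eq:Volterra1}, and collecting the coefficients of $f^W_+(x;k)$ and $f^W_-(x;k)$ reproduces $\alpha_+[V,W]$ and $\beta_+[V,W]$ of \eqref{eq:alphabeta+}. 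The statements for $f^V_-$ are obtained by the mirror-image argument, integrating from $-\infty$ and matching $e^{-ikx}$ as $x\to-\infty$, which produces the limits $\int_{-\infty}^x$ and the coefficients \eqref{eq:alphabeta-}.

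The main obstacle is the convergence of the defining integrals, together with the legitimacy of differentiating under the integral sign, throughout the complex strip $|\Im(k)|<\beta/2$, where $f^W_\pm$ and $f^V_+$ may grow exponentially. The saving feature is that every integrand carries the factor $(V-W)(y)$; since $V,W\in L^\infty_\beta$ this decays like $e^{-\beta y}$, which dominates the combined exponential growth of the three Jost factors precisely when $|\Im(k)|<\beta/2$. Making this quantitative — and thereby also supplying the convergence of the Neumann series invoked in the uniqueness step — is exactly where the detailed bounds on $f^V_\pm$ and their derivatives from Appendix~\ref{sec:jost-tools} are needed.
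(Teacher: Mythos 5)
Your proposal is correct and follows essentially the same route as the paper: variation of parameters for $(H_W-k^2)u=(W-V)u$ using the basis $\{f^W_+,f^W_-\}$, with the kernel $\bigl(f^W_+(x)f^W_-(y)-f^W_-(x)f^W_+(y)\bigr)/\mathcal{W}[f^W_+,f^W_-]$ and the asymptotic normalization at $\pm\infty$ fixing the constants of integration. The only cosmetic difference is that the paper derives the coefficients by solving the Lagrange system \eqref{sys-alphabeta} for $(\alpha',\beta')$ and integrating, whereas you verify the Green's-kernel formula via its jump conditions and close with a uniqueness argument; your remarks on the sign of the jump, on $\mathcal{W}[f^W_+,f^W_-]\neq0$, and on absolute convergence from the $e^{-\beta|y|}$ decay of $V-W$ against the $e^{\alpha|\cdot|}$ growth of the Jost factors are all consistent with the bounds of Appendix~\ref{sec:jost-tools}.
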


A very useful consequence is:
\begin{Corollary}\label{cor:relateW} Let $V,W\in L^\infty_\beta$ and and let $f^{V}_\pm,f^{W}_\pm$ denote their respective associated Jost solutions. Then for $|\Im(k)|<\beta/2$, one has
\begin{equation}
\mathcal{W}[f^{V}_+ ,f^{V}_-](k) \ = \ \mathcal{M}[V,W](k) \ \mathcal{W}[f^W_+ ,f^W_-](k)\ ,
\label{relateW}
\end{equation}
where $\mathcal{M}[V,W](x;k)$ is constant in $x$, and given by
\begin{equation}
\mathcal{M}[V,W](k) \ \equiv \ \alpha_+[V,W](x;k)\beta_-[V,W](x;k) \ - \ \alpha_-[V,W](x;k)\beta_+[V,W](x;k).
\label{Mdef}
\end{equation}
By~\eqref{eq:Wronskian-vs-transmission}, and taking the limit as $x\to -\infty$ of~\eqref{eq:alphabeta+} and~\eqref{eq:alphabeta-} in~\eqref{Mdef}, one has 
\begin{equation}
\frac{k}{t^V(k)}  =  \frac{k}{t^W(k)}  -  \frac{ I^{[V,W]}(k)}{2i},\  \text{with } \ I^{[V,W]}(k) \equiv \int_{-\infty}^\infty f^W_-(y;k)(V-W)(y)f^V_+(y;k)\dd y.
\label{IVW-def}
\end{equation}
\begin{Remark}
The relation~\eqref{IVW-def}, applied for $V=V_\eps$ and a judicious choice of $W$, is the point of departure for the proofs of our main results.
\end{Remark}
\end{Corollary}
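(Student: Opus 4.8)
The plan is to treat Corollary~\ref{cor:relateW} as an essentially algebraic consequence of the representation \eqref{VW-relate} furnished by Proposition~\ref{prop:q2s}, with the entire content concentrated in a computation of the Wronskian $\mathcal{W}[f^V_+,f^V_-]$. First I would record the derivatives of the coefficient functions. Differentiating the explicit integral formulas \eqref{eq:alphabeta+}--\eqref{eq:alphabeta-} by the fundamental theorem of calculus gives
\[
\partial_x\alpha_\pm \ = \ -\frac{f^W_-\,(V-W)\,f^V_\pm}{\mathcal{W}[f^W_+,f^W_-]}\ , \qquad \partial_x\beta_\pm \ = \ \frac{f^W_+\,(V-W)\,f^V_\pm}{\mathcal{W}[f^W_+,f^W_-]}\ ,
\]
from which the variation-of-parameters constraint $(\partial_x\alpha_\pm)\,f^W_+ + (\partial_x\beta_\pm)\,f^W_- = 0$ follows immediately, since the two terms cancel. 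Consequently, upon differentiating \eqref{VW-relate}, only the Jost factors contribute: $\partial_x f^V_\pm = \alpha_\pm\,\partial_x f^W_+ + \beta_\pm\,\partial_x f^W_-$.

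With the constraint established, I would expand the Wronskian using its bilinearity and antisymmetry. Substituting the above expressions for $f^V_\pm$ and $\partial_x f^V_\pm$, every term carrying $\mathcal{W}[f^W_+,f^W_+]$ or $\mathcal{W}[f^W_-,f^W_-]$ drops out and the surviving cross terms collapse to
\[
\mathcal{W}[f^V_+,f^V_-] \ = \ \big(\alpha_+\beta_- - \alpha_-\beta_+\big)\,\mathcal{W}[f^W_+,f^W_-] \ = \ \mathcal{M}[V,W]\,\mathcal{W}[f^W_+,f^W_-]\ ,
\]
which is exactly \eqref{relateW} with $\mathcal{M}$ as defined in \eqref{Mdef}. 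The $x$-independence of $\mathcal{M}$ is then a corollary of the identity itself: the left-hand side is a Wronskian of two solutions of the same second-order equation lacking a first-order term, hence constant in $x$ by Abel's identity, and likewise $\mathcal{W}[f^W_+,f^W_-]$ is $x$-independent; so wherever $\mathcal{W}[f^W_+,f^W_-]\neq0$ the quotient $\mathcal{M}[V,W] = \mathcal{W}[f^V_+,f^V_-]/\mathcal{W}[f^W_+,f^W_-]$ is constant in $x$, and by continuity everywhere in the strip.

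To extract the scalar relation \eqref{IVW-def}, I would evaluate the ($x$-independent) quantity $\mathcal{M}[V,W]$ in the limit $x\to-\infty$, where \eqref{eq:alphabeta-} gives $\alpha_-[V,W](x;k)\to0$ and $\beta_-[V,W](x;k)\to1$ because the domain of integration shrinks to empty. Hence
\[
\mathcal{M}[V,W](k) \ = \ \lim_{x\to-\infty}\big(\alpha_+\beta_- - \alpha_-\beta_+\big) \ = \ \lim_{x\to-\infty}\alpha_+[V,W](x;k) \ = \ 1 \ + \ \frac{I^{[V,W]}(k)}{\mathcal{W}[f^W_+,f^W_-](k)}\ ,
\]
with $I^{[V,W]}$ as in \eqref{IVW-def}. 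Substituting $\mathcal{W}[f^V_+,f^V_-]=-2ik/t^V(k)$ and $\mathcal{W}[f^W_+,f^W_-]=-2ik/t^W(k)$ from \eqref{eq:Wronskian-vs-transmission} into \eqref{relateW} yields $t^W(k)/t^V(k)=\mathcal{M}[V,W](k)$, and inserting the displayed expression for $\mathcal{M}$ gives $1/t^V(k)=1/t^W(k)-I^{[V,W]}(k)/(2ik)$, which is \eqref{IVW-def} after multiplying through by $k$.

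Given Proposition~\ref{prop:q2s}, no deep obstacle remains; the work is bookkeeping and the only genuine care points are analytic. Differentiating the integral representations term-by-term is immediate, but the interchange of the limit $x\to-\infty$ with the integrals defining $\alpha_+$ and $I^{[V,W]}$ must be justified, which amounts to the absolute convergence of $\int_{-\infty}^\infty f^W_-(y;k)(V-W)(y)f^V_+(y;k)\dd y$. This is precisely the decay and boundedness already underlying Proposition~\ref{prop:q2s}: the hypothesis $V,W\in L^\infty_\beta$ controls the localization of $V-W$, while the uniform Jost-solution bounds on $f^W_-$ and $f^V_+$ throughout $|\Im(k)|<\beta/2$ from Appendix~\ref{sec:jost-tools} bound the integrand. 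With those estimates all the manipulations above are valid on the full strip.
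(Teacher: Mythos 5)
Your proposal is correct and follows essentially the same route as the paper: establish the constraint $(\alpha_\pm)'f^W_+ + (\beta_\pm)'f^W_- = 0$, expand the Wronskian to obtain \eqref{relateW}, and then evaluate $\mathcal{M}[V,W]$ in the limit $x\to-\infty$ using $\alpha_-\to0$, $\beta_-\to1$ (justified by the Jost-solution bounds of Appendix~\ref{sec:jost-tools}) to extract \eqref{IVW-def}. The only cosmetic difference is that you justify the $x$-independence of $\mathcal{M}$ via Abel's identity for the two Wronskians, whereas the paper reads it off directly from the identity \eqref{relateW}; both are fine.
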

\begin{proof}[Proof of Corollary~\ref{cor:relateW}.]
Equation~\eqref{relateW} follows from substituting the expressions \eqref{VW-relate} into the definition of $\mathcal{W}[f^{V}_+ ,f^{V}_-]$, and using that $\alpha_+[V,W],\beta_+[V,W]$ satisfy the identity:\ $ (\alpha_\pm)' f^W_+ \ + \ (\beta_\pm)' f^W_- \ = \ 0$; see~\eqref{sys-alphabeta} below.

To prove~\eqref{IVW-def}, we begin by making use of relation~\eqref{eq:Wronskian-vs-transmission}. One has
\[
\frac{k}{t^V(k)} \ = \ -\frac{\mathcal{W}[f^{V}_+ ,f^{V}_-](k) }{2i} \]
We next relate $\mathcal{W}[f^{V}_+ ,f^{V}_-]$ to $\mathcal{W}[f^{W}_+ ,f^{W}_-]$ by 
substitution of the expressions~\eqref{VW-relate} into the definition of $\mathcal{W}[f^{V}_+ ,f^{V}_-]$ 
 and using~\eqref{eq:alphabeta+} and~\eqref{eq:alphabeta-} to obtain
\[ \frac{k}{t^V(k)} = \
 -\mathcal{M}[V,W](x,k)\frac{ \mathcal{W}[f^W_+ ,f^W_-](k)}{2i} \ = \ \mathcal{M}[V,W](x,k)\frac{k}{t^W(k)}.\]

Now, since $V,W\in L^\infty_\beta$, the estimates of Lemma~\ref{Lem:f-gen} yield
\[\lim\limits_{x\to -\infty} \beta_+[V,W](x) \ < \ \infty, \quad
\lim\limits_{x\to -\infty} \alpha_-[V,W](x) \ = \ 0 \quad \text{and} \quad 
\lim\limits_{x\to -\infty} \beta_-[V,W](x) \ = \ 1. \]
Therefore,
\[ \mathcal{M}[V,W](k)\ =\ \lim_{x\to-\infty}\alpha_+[V,W](x).\]

Therefore, one deduces from Proposition~\ref{prop:q2s} that
\[\frac{k}{t^V(k)} \ = \ \frac{k}{t^W(k)}\lim\limits_{x\to -\infty}\alpha_+[V,W] \ = \ \frac{k}{t^W(k)}\left(1+ \frac{ I^{[V,W]}(k)}{\mathcal{W}[f^W_+ ,f^W_-](k)}\right) \ = \ \frac{k}{t^W(k)} \ - \ \frac{ I^{[V,W]}(k)}{2i},\]
where $I^{[V,W]}(k)$ is given in~\eqref{IVW-def}. 
The proof of Corollary~\ref{cor:relateW} is complete. \end{proof}

\medskip
\begin{proof}[Proof of Proposition~\ref{prop:q2s}.]
The integral equation governing a Jost solution for the potential $V$ may be written relative to the potential $W$ as follows. Start with the equation for $u_\pm=f^V_\pm$ written in the form:
\begin{equation}
 (H_W-k^2)\ u\ =\ \left(-\frac{d^2}{dx^2} \ + \ W \ - \ k^2\right) \ u \ = \ (W-V)u.\label{pert}\end{equation}
Treating the right hand side of~\eqref{pert} as an inhomogeneous term, we now derive an equivalent integral equations for the Jost solutions . Thus, we seek solutions $u_\pm$ of~\eqref{pert}, such that 
$u_\pm(x;k)\ \ \sim\ \ f_\pm^V(x;k),\ \ x\to\pm\infty$ 
of the form
\[u(x,k) \ \equiv\ \alpha(x,k) f^W_+(x,k) \ + \ \beta(x,k) f^W_-(x,k),\ \ \text{with}\ \ \alpha' f^W_+ \ + \ \beta' f^W_-\ = \ 0.\]
 We obtain
$ u' \ = \ \alpha {f^W_+}' \ + \ \beta {f^W_-}',\ u'' \ = 
 \alpha' {f^W_+}' \ + \ \beta' {f^W_-}' \ + \ (W-k^2)u$ 
and eventually the following system for $(\alpha',\beta')$:
\begin{equation}\label{sys-alphabeta}
\left\{ \begin{array}{l}
\alpha' f^W_+ \ + \ \beta' f^W_- \ = \ 0 \\
 \alpha' {f^W_+}' \ + \ \beta' {f^W_-}' \ = \ -\left(-\partial_x^2 \ + \ W \ - \ k^2\right) \ u \ = \ (V -W) u 
\end{array}\right. \end{equation}

Solving for $\alpha'$ and $\beta'$ we have:
\[ \alpha' \ = \ \frac{-f^W_-(x,k) (V(x) -W(x)) u(x,k) }{\mathcal{W}[f^W_+ ,f^W_-](k)} \quad \text{and} \quad \beta'\ = \ \frac{f^W_+(x,k) (V(x) -W(x)) u(x,k) }{\mathcal{W}[f^W_+ ,f^W_-](k)}.\]

The expressions for $\alpha_\pm$ and $\beta_\pm$ in~\eqref{eq:alphabeta+} and 
\eqref{eq:alphabeta-} follow by integrating and imposing the asymptotic behavior of $u_\pm\sim f^{V}_\pm$ as $x\to\pm\infty$. In particular, one has $f^{V}_+(x;k)\sim f^{W}_+(x;k)\sim e^{ikx}$ when $x\to\infty$, and $f^{V}_-(x;k) \sim f^{W}_-(x;k) \sim e^{-ikx}$ when $x\to-\infty$. This completes the proof of Proposition~\ref{prop:q2s}. \end{proof}

\section{Convergence of $t^{q_\eps}(k)$ for $k\in\mathbb{C}$ and bifurcation of eigenvalues from the edge of the continuous spectrum}\label{sec:main-results}

In this section we state our main results for the Schr\"odinger equation~\eqref{Hq-def}
 with potential of the form:
 \begin{equation}
V_\eps(x)\ =\ V(x,x/\eps).
\label{formofpot}
\end{equation}

Recall the exponentially weighted norms $\big| g \big|_{W^{n,\infty}_{\beta}}$ introduced in section~\ref{sec:def}.
The potential $V(x,y)$ is assumed to satisfy the following precise hypotheses:

\noindent {\bf Hypotheses (V):} {\em
$V(x,y)$ is real-valued and of the form:
\begin{equation}
V(x,y)\ = \ q_{\rm av}(x)+ q(x,y) \ = \ q_{\rm av}(x) \ + \ \sum_{j\ne0}\ q_j(x)\ e^{2\pi i\lambda_j y}\ .
\label{q-expand}
\end{equation}
There exist positive constants $\theta>0$ and $\beta>0$ such that the sequence of non-zero (distinct) frequencies $\{\lambda_j\}_{j\in\mathbb{Z}\setminus\{0\}}$ satisfies
\begin{equation}\label{hyp-lambda}
\inf_{j\ne k} |\lambda_j-\lambda_k|\ge\theta>0, \quad \inf_{j\in\mathbb{Z}\setminus\{0\}} |\lambda_j|\ge\theta>0 \ ,
\end{equation}
and the coefficients $\{q_j(x)\}_{j\in\mathbb{Z}}$, satisfy the decay and regularity assumptions
 \begin{equation}
\big\bracevert V \big\bracevert \ \equiv \ \big|q_{\rm av}\big|_{W^{1,\infty}_{\beta}} \ + \ \sum_{j\in\mathbb{Z}\setminus\{0\}} \big|q_j\big|_{W^{3,\infty}_{\beta}}\ <\ \infty.
 \label{qj-decay-reg}
 \end{equation}
}
\begin{Remark}
If $V$ satisfies Hypotheses {\bf (V)}, and $\seff$ is defined in~\eqref{sigma-eff-1},\eqref{Lambda-def-aper}, then $V_\eps \in L^{\infty}_\beta$, $q_{\rm av}+\seff\in W^{1,\infty}_\beta$ and $\seff\in W^{3,\infty}_\beta$, and there exists $C(\big\bracevert V \big\bracevert)$, independent of $\eps$, such that
\[\big|V_\eps \big|_{L^{\infty}_\beta} \ \leq \ C(\big\bracevert V \big\bracevert) , \quad \big|q_{\rm av}+\seff \big|_{W^{1,\infty}_\beta} \ \leq \ C(\big\bracevert V \big\bracevert), \quad \big|\seff \big|_{W^{3,\infty}_\beta} \ \leq \ \eps^2 C(\big\bracevert V \big\bracevert).\]
\end{Remark}

Our approach is to study the Jost solutions, $f^{V_\eps}(x;k)$, and scattering coefficients, 
$t^{V_\eps}(k),\ r^{V_\eps}_\pm(k)$, for $\eps$ sufficiently small $\eps\in[0,\eps_0)$, and for $k$ in a complex neighborhood of zero. More precisely, we assume
\smallskip

\noindent {\bf Hypotheses (K):} {\em
We assume that the wave number, $k$, varies in $K$, a compact subset
of $\CC$ such that
 \begin{itemize}
\item $K\subset \{k,\ |\Im(k)|<\alpha\}$, with $0<\alpha<\beta/2$, and $\beta$ is as in Hypotheses~{\bf (V)};
\item $K$ does not contain any pole of the transmission coefficient, $t^{q_{\rm av}}(k)$. 
\end{itemize}
}
\noindent It follows that $t^{q_{\rm av}}(k)$ is bounded, uniformly for $k\in K$, and we define
\begin{equation}\label{def-Malpha}
M_K \ \equiv \ \max\big(1\ ,\ \sup_{k\in K} |t^{q_{\rm av}}(k)|\big) \ < \ \infty .
\end{equation}
Moreover, if $K\subset \RR$, then $M_K= 1$; see~\eqref{tr-energy}.


%

\begin{Remark}
We can relax the spatial decay assumptions of Hypotheses {\bf (V)}, if we restrict Hypotheses {\bf (K) }to the upper-half plane $\Im(k)\geq0$. Our methods apply and only require sufficient algebraic decay of $V(x)$. Results of this kind for $k\in\RR$ are presented in Section~\ref{sec:k-real}.
\end{Remark}

We now state our main theorem and its important consequences.

\begin{theorem}[Convergence of the transmission coefficient] \label{thm:t-conv} ~\\
Assume $V_\eps(x)=V(x,x/\eps)$ satisfies Hypotheses~{\bf (V)}, and $k\in K$ satisfies Hypotheses~{\bf (K)}. Then there exists $\eps_0>0$ such that for all $|\eps|<\eps_0$, $t^{q_{\rm av}+q_\eps}(k)$, the transmission coefficient of the scattering problem~\eqref{Hq-def}-\eqref{scattering} with
\begin{equation}
V_\eps(x)\ =\ \ q_{\rm av}(x)+q_\eps(x)\ =\ q_{\rm av}(x)+q(x,x/\eps),
\nn\end{equation}
 is uniformly approximated by the transmission coefficient, $t^{q_{\rm av}+\seff}(k)$, for 
 \begin{equation}
 V^\eps_{\rm eff}(x)\ =\ q_{\rm av}(x)\ +\ \seff(x), 
 \nn\end{equation}
 where $\seff(x)$ denotes the 
\underline{{\it effective potential well}}, 
 \begin{equation}
 \seff(x)\ \equiv\ -\eps^2\ \Lambda_{\rm eff}(x)\ \equiv\ -\frac{\eps^2}{(2\pi)^2} \sum_{j\in\mathbb{Z}\setminus\{0\}} \frac{|q_j(x)|^2}{{\lambda_j}^2}.
\label{sigma-eff}
%
\end{equation}
Specifically, we have the estimate
\begin{equation}
\sup_{k\in K} \left|\ \dfrac{k}{t^{q_{\rm av}+\seff}(k) } \ - \ \dfrac{k}{ t^{q_{\rm av}+q_\eps}(k) }\ \right| \ \le\ \eps^3\ M_K\ C(\big\bracevert V \big\bracevert,\sup_{k\in K}|k|),
 \label{t-diff-smallk}
\end{equation}
with $C(\big\bracevert V \big\bracevert) $ a constant, independent of $\eps$.
\end{theorem}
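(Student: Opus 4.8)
The plan is to compare both transmission coefficients to that of the slowly varying background $q_{\rm av}$ and to extract the effective potential from a second-order oscillatory cancellation. First I would apply relation \eqref{IVW-def} of Corollary~\ref{cor:relateW} twice, with the common reference $W=q_{\rm av}$ (which lies in $L^\infty_\beta$ and, by Hypotheses~{\bf (K)}, has $t^{q_{\rm av}}$ pole-free on $K$): once for $V=V_\eps=q_{\rm av}+q_\eps$ and once for $V=V^\eps_{\rm eff}=q_{\rm av}+\seff$. Subtracting, the common term $k/t^{q_{\rm av}}(k)$ cancels and
\[
\frac{k}{t^{q_{\rm av}+\seff}(k)}-\frac{k}{t^{q_{\rm av}+q_\eps}(k)}=\frac{1}{2i}\Big(I^{[V_\eps,q_{\rm av}]}(k)-I^{[V^\eps_{\rm eff},q_{\rm av}]}(k)\Big),
\]
with $I^{[V_\eps,q_{\rm av}]}=\int_\RR f^{q_{\rm av}}_-\,q_\eps\,f^{V_\eps}_+\dd y$ and $I^{[V^\eps_{\rm eff},q_{\rm av}]}=\int_\RR f^{q_{\rm av}}_-\,\seff\,f^{V^\eps_{\rm eff}}_+\dd y$. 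Thus \eqref{t-diff-smallk} reduces to showing this difference of integrals is $\mathcal{O}(\eps^3)$, uniformly for $k\in K$; working with $k/t$ rather than $t$ is essential since it stays finite as $k\to0$, and the constant $M_K$ will enter through the uniform Jost bounds of Appendix~\ref{sec:jost-tools}. The smooth term is immediate: since $\seff=\mathcal{O}(\eps^2)$ and $f^{V^\eps_{\rm eff}}_+=f^{q_{\rm av}}_++\mathcal{O}(\eps^2)$, one has $I^{[V^\eps_{\rm eff},q_{\rm av}]}=\int_\RR f^{q_{\rm av}}_-\,\seff\,f^{q_{\rm av}}_+\dd y+\mathcal{O}(\eps^4)$.

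The essential obstacle is that $q_\eps$ is \emph{not} small: $\big|q_\eps\big|_{L^\infty_\beta}=\mathcal{O}(1)$, so $f^{V_\eps}_+$ cannot be treated as an $\mathcal{O}(\eps)$ perturbation of $f^{q_{\rm av}}_+$ by the naive Volterra estimate; all smallness is hidden in the mean-zero oscillation $q_\eps(x)=\sum_{j\ne0}q_j(x)e^{2\pi i\lambda_j x/\eps}$. The remedy is an integration-by-parts (``normal form'') analysis. Expanding $f^{V_\eps}_+$ by iterating the Volterra equation \eqref{eq:Volterra1} of Proposition~\ref{prop:q2s} about $W=q_{\rm av}$, the first Born term contributes $\int_\RR f^{q_{\rm av}}_-\,q_\eps\,f^{q_{\rm av}}_+\dd y$. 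Here the rapidly oscillating, mean-zero $q_\eps$ is integrated against the smooth, slowly varying factor $f^{q_{\rm av}}_-f^{q_{\rm av}}_+$; integrating by parts in $y$ gains a factor $\eps/\lambda_j$ per step, and the lower bound $\inf_j|\lambda_j|\ge\theta$ (no small denominators) together with the three derivatives afforded by the $W^{3,\infty}_\beta$ regularity of the $q_j$ and the $W^{1,\infty}_\beta$ regularity of $q_{\rm av}$ (which makes $f^{q_{\rm av}}_\pm$ thrice differentiable through \eqref{schrod-sigma}) yield three such gains, so this term is $\mathcal{O}(\eps^3)$.

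The $\mathcal{O}(\eps^2)$ contribution — the one that must be cancelled by $\seff$ — comes from the second Born term $\int_\RR f^{q_{\rm av}}_-(x)\,q_\eps(x)\int_x^\infty G^{q_{\rm av}}(x,y)\,q_\eps(y)\,f^{q_{\rm av}}_+(y)\dd y\,\dd x$, where $G^{q_{\rm av}}$ is the Volterra kernel of \eqref{eq:Volterra1}. Writing $q_\eps(x)q_\eps(y)=\sum_{j',j}q_{j'}(x)q_j(y)e^{2\pi i(\lambda_{j'}x+\lambda_j y)/\eps}$, only the resonant indices $\lambda_{j'}=-\lambda_j$, i.e. $j'=-j$ with $q_{-j}=\overline{q_j}$, survive as non-oscillatory. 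Exploiting $G^{q_{\rm av}}(x,x)=0$ together with $\partial_y G^{q_{\rm av}}(x,y)\big|_{y=x}=1$, I would integrate by parts twice in the inner integral: the boundary contribution vanishes at leading order because the kernel vanishes on the diagonal, and the surviving term produces $-\tfrac{\eps^2}{(2\pi)^2}\int_\RR f^{q_{\rm av}}_-f^{q_{\rm av}}_+\sum_{j\ne0}\tfrac{|q_j|^2}{\lambda_j^2}\dd x$, while the non-resonant outer parts carry a residual phase and are $\mathcal{O}(\eps^3)$ after one further integration by parts. By the definition \eqref{sigma-eff} of $\seff$, this resonant term is exactly $\int_\RR f^{q_{\rm av}}_-\,\seff\,f^{q_{\rm av}}_+\dd x$, which matches $I^{[V^\eps_{\rm eff},q_{\rm av}]}$ to $\mathcal{O}(\eps^4)$; hence the two $\mathcal{O}(\eps^2)$ contributions cancel identically. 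This is precisely how the effective potential \eqref{sigma-eff} is forced.

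It remains to show that everything discarded is $\mathcal{O}(\eps^3)$ uniformly for $k\in K$: the non-resonant parts above, the third and higher Born terms (each carrying at least as many oscillatory factors as powers of $q_\eps$, hence amenable to the same integration-by-parts-against-$\lambda_j$ mechanism), and the smooth $\mathcal{O}(\eps^4)$ corrections. I expect the main difficulty to be exactly this step: converting the formal, term-by-term oscillatory cancellation into a uniform estimate on the full Jost solution $f^{V_\eps}_+$, which is \emph{not} perturbatively small. The clean way to organize this rigorously — the heart of section~\ref{sec:hard} — is to recast the Volterra equation for $f^{V_\eps}_+$ in normal form, subtracting the effective part of $q_\eps$ before estimating so that the transformed equation has a genuinely $\mathcal{O}(\eps)$ inhomogeneity and closes by a contraction argument, with all constants controlled by $\big\bracevert V\big\bracevert$ and $\sup_{k\in K}|k|$ and all oscillatory remainders bounded through the weighted Jost estimates of Appendix~\ref{sec:jost-tools}; these estimates, valid on $\{|\Im(k)|<\beta/2\}\supset K$, simultaneously supply the factor $M_K$ and the uniformity in the complex neighborhood $K$ of $k=0$.
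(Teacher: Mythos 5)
Your core mechanism is the right one and coincides with the paper's: the identity \eqref{IVW-def}, the gain of a factor $\eps/\lambda_j$ per integration by parts against the mean-zero oscillation, the resonant pairing $\lambda_{j'}=-\lambda_j$ with boundary terms governed by $\m(x,y)\vert_{y=x}=0$ and $\partial_y\m(x,y)\vert_{y=x}=1$ producing exactly $\seff=-\tfrac{\eps^2}{4\pi^2}\sum_{j\ne0}|q_j|^2/\lambda_j^2$, and the need for the $W^{3,\infty}_\beta$ regularity and the weighted Jost bounds of Appendix~\ref{sec:jost-tools}. The organization differs in two respects. First, you apply \eqref{IVW-def} twice with common reference $q_{\rm av}$ and subtract, whereas the paper applies it once with $W=q_{\rm av}+\seff$, so that the single integral $I^{[q_{\rm av}+\seff,q_{\rm av}+q_\eps]}=\int f^{q_{\rm av}+\seff}_-\,(q_\eps-\seff)\,f^{q_{\rm av}+q_\eps}_+$ already carries the cancellation; the two versions are equivalent to the required order. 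Second, and more substantively, the paper never expands $f^{q_{\rm av}+q_\eps}_+$ into a Born series: it performs a single peeling $f^{q_{\rm av}+q_\eps}_+=f^{q_{\rm av}}_++J[q_{\rm av},q_\eps]$ via \eqref{eq:Volterra1}, keeps the \emph{exact} Jost solution inside $J$, and after two integrations by parts uses the ODE to replace $\partial_y^2 f^{q_{\rm av}+q_\eps}_+$ by $(q_{\rm av}+q_\eps-k^2)f^{q_{\rm av}+q_\eps}_+$; the new factor of $q_\eps$ supplies the second oscillation, its resonant part is removed by $\seff$, and every surviving factor of $f^{q_{\rm av}+q_\eps}_+$ is then estimated in absolute value via Lemma~\ref{Lem:f-gen}, with no further expansion.

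The genuine gap in your version is the treatment of the third and higher Born terms. The claim that they are ``amenable to the same integration-by-parts mechanism'' conceals a small-divisor problem: the $n$-th term carries phases $e^{2\pi i(\lambda_{j_1}+\cdots+\lambda_{j_n})y/\eps}$, and Hypotheses {\bf (V)} only bound $|\lambda_j|$ and $|\lambda_j-\lambda_l|$ from below. This controls the two-frequency sums, since $|\lambda_j+\lambda_l|=|\lambda_j-\lambda_{-l}|\ge\theta$ for $j+l\ne0$, but gives no lower bound on $|\lambda_{j_1}+\lambda_{j_2}+\lambda_{j_3}|$ when it is nonzero; in the aperiodic case these sums can be arbitrarily small, the integration-by-parts gain is lost, and the summation over multi-indices is no longer visibly convergent. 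The paper's organization only ever encounters one- and two-frequency phases (see the statement of Lemma~\ref{Lem:cancellation}), which is precisely why \eqref{hyp-lambda} suffices. Your closing suggestion of a normal-form/contraction reformulation is the right instinct for repairing this, but as written it is a plan rather than an argument; to close the proof you should either adopt the one-step peeling with the ODE substitution, or show explicitly that after the second order the remainder can be bounded in absolute value without any further oscillatory integration.
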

%
\noindent The proof of Theorem~\ref{thm:t-conv} is given in section~\ref{sec:hard}; we first present its consequences. A simple outcome of~\eqref{t-diff-smallk} and the genericity of $q_{\rm av}+\seff$ for $\epsilon$ sufficiently small (which holds for $q_{\rm av}$ generic {\em and} non-generic; see Corollary~\ref{cor:tseff}\footnote{Note that in the non-generic case, the condition $\int_{\RR}\Lambda_{\rm eff}(y)(f^{q_{\rm av}}_-(y;0))^2\dd y\neq 0$ is always satisfied. Indeed, $f^{q_{\rm av}}_-(\cdot;0)\in\RR$ by~\eqref{reality-condition}, and is non-zero almost everywhere on the support of $\Lambda_{\rm eff}$.}) is:
\begin{Corollary}\label{cor:qeps-is-generic}
Assume $V_\eps=q_{\rm av}+q_\eps$ satisfies Hypotheses {\bf (V)}. We allow $q_{\rm av}$ to be either generic or non-generic in the sense of Definition~\ref{def:generic}. Then, there exists $\eps_0>0$ such that for any $0<\eps<\eps_0$,  $V_\eps$ is generic.
\end{Corollary}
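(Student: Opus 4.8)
The plan is to test genericity through the boundary value $\lim_{k\to0} k/t^V(k)$ furnished by Definition~\ref{def:generic}, and to show that the ``genericity gap'' of the effective potential $q_{\rm av}+\seff$ dominates the $\O(\eps^3)$ discrepancy between the two transmission coefficients supplied by estimate~\eqref{t-diff-smallk}. First I would fix a compact set $K\subset\CC$ satisfying Hypotheses~{\bf (K)} with $0\in K$; this is admissible because $k=0$ is never a pole of $t^{q_{\rm av}}(k)$ --- it is a zero when $q_{\rm av}$ is generic, and a point with $t^{q_{\rm av}}(0)\neq0$ when $q_{\rm av}$ is non-generic. For such $K$ and all small $\eps$, both $k/t^{q_{\rm av}+\seff}(k)$ and $k/t^{q_{\rm av}+q_\eps}(k)$ extend continuously to $k=0$.

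Next I would use Corollary~\ref{cor:tseff}, which guarantees that $q_{\rm av}+\seff$ is generic for $\eps$ small, and, more importantly, quantify the size of its limiting value. Applying relation~\eqref{IVW-def} with $V=q_{\rm av}+\seff$ and $W=q_{\rm av}$ and letting $k\to0$ gives
\[
L^{\seff}_\eps\ \equiv\ \lim_{k\to0}\frac{k}{t^{q_{\rm av}+\seff}(k)}\ =\ \lim_{k\to0}\frac{k}{t^{q_{\rm av}}(k)}\ -\ \frac{I^{[q_{\rm av}+\seff,\,q_{\rm av}]}(0)}{2i}.
\]
When $q_{\rm av}$ is generic the first term equals $I^{q_{\rm av}}(0)/2i\neq0$ and is of order one, whereas the correction $I^{[q_{\rm av}+\seff,q_{\rm av}]}(0)$ is $\O(\eps^2)$ since $\seff=-\eps^2\Lambda_{\rm eff}$; hence $|L^{\seff}_\eps|\gtrsim1$. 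When $q_{\rm av}$ is non-generic the first term vanishes, and using $f^{q_{\rm av}+\seff}_+(\cdot;0)\to f^{q_{\rm av}}_+(\cdot;0)$ together with the proportionality of $f^{q_{\rm av}}_+(\cdot;0)$ and $f^{q_{\rm av}}_-(\cdot;0)$ at zero energy, $L^{\seff}_\eps$ is, to leading order, a nonzero multiple of $\eps^2\int_\RR\Lambda_{\rm eff}(y)\big(f^{q_{\rm av}}_-(y;0)\big)^2\dd y$, which is nonzero by the footnote to this corollary. In either case there is $c_0>0$, independent of $\eps$, with $|L^{\seff}_\eps|\ge c_0\,\eps^2$ for all small $\eps$.

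Finally I would pass to the limit $k\to0$ in~\eqref{t-diff-smallk}. Since that bound is uniform on $K\ni0$ and both sides are continuous at $k=0$, evaluating via the continuous limit yields
\[
\Big|\,L^{\seff}_\eps\ -\ \lim_{k\to0}\frac{k}{t^{q_{\rm av}+q_\eps}(k)}\,\Big|\ \le\ \eps^3\,M_K\,C(\big\bracevert V \big\bracevert,\sup_{k\in K}|k|),
\]
with $M_K$ and $C$ independent of $\eps$. Combining this with the lower bound $|L^{\seff}_\eps|\ge c_0\eps^2$ and the triangle inequality shows that $\big|\lim_{k\to0} k/t^{q_{\rm av}+q_\eps}(k)\big|\ge c_0\eps^2-\eps^3 M_K C>0$ for all $\eps<\eps_0$ with $\eps_0$ small enough. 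By Definition~\ref{def:generic} this is precisely the statement that $V_\eps=q_{\rm av}+q_\eps$ is generic. The main obstacle is the lower bound in the non-generic case: mere genericity of $q_{\rm av}+\seff$ (i.e.\ $L^{\seff}_\eps\neq0$) does not suffice, because the comparison error is $\O(\eps^3)$ and one must verify that the gap $|L^{\seff}_\eps|$ does not itself decay faster than $\eps^3$. This reduces to the non-degeneracy of $\int_\RR\Lambda_{\rm eff}\,(f^{q_{\rm av}}_-(\cdot;0))^2\dd y$, which holds because $f^{q_{\rm av}}_-(\cdot;0)$ is real and cannot vanish on a positive-measure subset of $\{\Lambda_{\rm eff}>0\}$.
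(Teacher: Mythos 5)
Your proof is correct and follows essentially the same route the paper intends: the paper presents this corollary as ``a simple outcome'' of estimate~\eqref{t-diff-smallk} and of Corollary~\ref{cor:tseff}, with the footnote supplying exactly the non-degeneracy $\int_\RR\Lambda_{\rm eff}(y)\big(f^{q_{\rm av}}_-(y;0)\big)^2\dd y\neq0$ that you invoke. You have correctly identified and handled the one point the paper leaves implicit --- that in the non-generic case the $\O(\eps^2)$ lower bound on $\lim_{k\to0}k/t^{q_{\rm av}+\seff}(k)$ must dominate the $\O(\eps^3)$ comparison error --- so nothing further is needed.
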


Theorem~\ref{thm:t-conv} holds for both generic and non-generic potentials, $q_{\rm av}$. In the following section we explore consequences for the non-generic potential, ${q_{av}(x)\equiv0}$, {\it i.e.} $V_\eps(x)=q(x,x/\eps),\ $ with $\int_0^1q(x,y)\dd y=0$. In particular, we explain the non-uniformity localization phenomenon discussed in the Introduction. Results for non-trivial $q_{\rm av}(x)$ are developed in sections~\ref{sec:refless} and~\ref{sec:genericcase}.

\subsection{Results for mean-zero oscillatory potentials: $q_{\rm av}(x)\equiv0$}
The following corollary, comparing $t^{q_\eps}(k)$ and $t^\seff(k)$, is a consequence of Theorem~\ref{thm:t-conv}, and Lemma~\ref{Lem:tsigma}.
\begin{Corollary}\label{cor:compare-t-qav-is-0}
 Let $q_{\rm av}\equiv0$, so that $V_\eps(x)=q_\eps(x)=q(x,x/\eps)$. Let $K$ denote the compact set of Hypotheses~{\bf (K)}. There exists $\eps_0>0$ such that if
 \begin{equation}\label{eq:condition1}
 \left| k - \frac{i\eps^2}{2}\int_{-\infty}^\infty \Lambda_{\rm eff}\right|\geq C \eps^\tau,\ \ \tau<3,\ \ k\in K, \ \ 0<\eps<\eps_0,
 \end{equation}
then one has for $0<\eps<\eps_0$,
 \begin{equation}
\left| \frac{t^{\seff}(k)}{t^{q_\eps}(k)}-1 \right| \ = \ \O\big(\eps^{3-\tau}\big).
\label{compare-tsq0iszero1}
 \end{equation}
If in addition to~\eqref{eq:condition1}, the following condition holds:
 \begin{equation}\label{eq:condition2}
 \left| k - \frac{i\eps^2}{2}\int_{-\infty}^\infty \Lambda_{\rm eff}\right|\geq C |k|, \ \ k\in K, \ \ 0<\eps<\eps_0\nn
 \end{equation}
 then one has for $0<\eps<\eps_0$,
\[
\left| t^{\seff}(k) - t^{q_\eps}(k) \right| \ = \ \O\big(\eps^{3-\tau}\big), \quad \text{and}\quad\left| t^{q_\eps}(k) \ - \ \frac{k}{k-\frac{i\eps^2}{2}\int_{-\infty}^\infty \Lambda_{\rm eff}} \right| \ = \ \O\big(\eps^{3-\tau}\big).
\]
In particular, if $k = \eps^{2}\kappa$, with $\kappa\neq \kappa^\star\equiv-\frac{1}{2i} \int \Lambda_{\rm eff}$, then for $0<\eps<\eps_0$,
 \begin{equation}
 \left| t^{\seff}(\eps^{2}\kappa) - t^{q_\eps}(\eps^{2}\kappa) \right| \ = \ \O\left(\frac{\eps\ |\kappa|}{|\kappa-\kappa^\star|^2}\right) \ = \ \O(\eps),\ \ \left| t^{q_\eps}(\eps^{2}\kappa) - \frac{\kappa}{\kappa-\frac{i}{2}\int_{-\infty}^\infty \Lambda_{\rm eff}} \right| \ = \ \O\big(\eps\big).
\label{compare-tsq0iszero3}
\end{equation}
\end{Corollary}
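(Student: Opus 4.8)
The plan is to reduce every assertion to a single ``master'' estimate for $k/t^{q_\eps}(k)$, and then propagate it through the elementary algebraic identities relating transmission coefficients, their reciprocals, and the exact Dirac coefficient. Throughout write $m\equiv\int_\RR\Lambda_{\rm eff}$ and $p(\eps)\equiv\frac{i\eps^2}{2}m$, so that $p(\eps)=\eps^2\kappa^\star$ is the putative pole location. Since $q_{\rm av}\equiv0$ gives $t^{q_{\rm av}}(k)\equiv1$ and hence $M_K=1$, Theorem~\ref{thm:t-conv} reads
\[
\sup_{k\in K}\left|\frac{k}{t^\seff(k)}-\frac{k}{t^{q_\eps}(k)}\right|\ \le\ C\eps^3 .
\]
Lemma~\ref{Lem:tsigma} supplies the low-energy expansion of the shallow-well reciprocal transmission coefficient in the form $k/t^\seff(k)=k-p(\eps)+\O(\eps^3)$, uniformly on $K$. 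Adding the two bounds yields the master estimate
\[
\frac{k}{t^{q_\eps}(k)}\ =\ k-p(\eps)+\O(\eps^3),\qquad k\in K .
\]
Everything else follows by dividing this relation by quantities that, under the stated hypotheses, are bounded below.

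To obtain \eqref{compare-tsq0iszero1} I would write $t^\seff/t^{q_\eps}-1=(k/t^{q_\eps}-k/t^\seff)/(k/t^\seff)$. The numerator is $\O(\eps^3)$ by Theorem~\ref{thm:t-conv}, while the denominator equals $k-p(\eps)+\O(\eps^3)$; hypothesis \eqref{eq:condition1}, namely $|k-p(\eps)|\ge C\eps^\tau$ with $\tau<3$, guarantees $\eps^3\ll\eps^\tau$, so for $\eps_0$ small the denominator is bounded below by $\tfrac12 C\eps^\tau$. Dividing gives $\O(\eps^{3-\tau})$, which is exactly \eqref{compare-tsq0iszero1}.

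For the two difference estimates I would pass from reciprocals to the coefficients themselves through
\[
t^\seff-t^{q_\eps}\ =\ k\,\frac{k/t^{q_\eps}-k/t^\seff}{(k/t^\seff)(k/t^{q_\eps})},\qquad
t^{q_\eps}-\frac{k}{k-p(\eps)}\ =\ k\,\frac{(k-p(\eps))-k/t^{q_\eps}}{(k/t^{q_\eps})(k-p(\eps))} ,
\]
so that in each case the numerator carries a factor $|k|\cdot\O(\eps^3)$. For the denominators I would use the lower bound $\gtrsim\eps^\tau$ for one reciprocal factor (as above, noting that $k/t^{q_\eps}$ inherits the lower bound of $k/t^\seff$ since the two differ by $\O(\eps^3)\ll\eps^\tau$), and the lower bound $|k-p(\eps)|\ge C|k|$ of the additional hypothesis for the other factor; the second factor of $|k|$ then cancels the $|k|$ in the numerator, leaving $\O(\eps^{3-\tau})$ in both identities.

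Finally, setting $k=\eps^2\kappa$ with $\kappa\ne\kappa^\star$ gives $k-p(\eps)=\eps^2(\kappa-\kappa^\star)$, so \eqref{eq:condition1} holds with $\tau=2$ (as $|\kappa-\kappa^\star|$ is bounded below) and the additional hypothesis holds because $|\kappa-\kappa^\star|/|\kappa|$ stays bounded below away from $\kappa^\star$; the preceding estimates then give the $\O(\eps)$ rates. Keeping the $\kappa$-dependence explicit in the denominators, where $|k/t^\seff|\approx|k/t^{q_\eps}|\approx\eps^2|\kappa-\kappa^\star|$, reproduces the sharper form $\O\big(\eps|\kappa|/|\kappa-\kappa^\star|^2\big)$ of \eqref{compare-tsq0iszero3}; and since $k/(k-p(\eps))=\kappa/(\kappa-\kappa^\star)=\kappa/(\kappa-\tfrac{i}{2}m)$, the last displayed bound is immediate. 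The only genuinely substantive input is Lemma~\ref{Lem:tsigma}: the Corollary is pure bookkeeping once the expansion $k/t^\seff=k-p(\eps)+\O(\eps^3)$ is in hand, and the single delicate point is to verify, uniformly on $K$, that the $\O(\eps^3)$ corrections never swamp the $\O(\eps^\tau)$ lower bounds on the denominators — which is precisely why the restriction $\tau<3$ is imposed.
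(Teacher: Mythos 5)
Your proposal is correct and follows essentially the same route as the paper: the paper likewise combines Theorem~\ref{thm:t-conv} (with $M_K=1$) and the expansion $k/t^{\seff}(k)=k-\tfrac{i\eps^2}{2}\int\Lambda_{\rm eff}+\O(\eps^4)$ from Corollary~\ref{cor:tseff}/Lemma~\ref{Lem:tsigma}, and then declares the remaining algebra ``straightforward.'' Your write-up simply makes that bookkeeping explicit (and your slightly weaker $\O(\eps^3)$ remainder for $k/t^{\seff}$ still suffices because $\tau<3$), so there is nothing to correct.
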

\begin{proof} Corollary~\ref{cor:tseff} of appendix~\ref{sec:seff-tools} gives
\begin{equation}
\frac{k}{t^\seff(k)} \ = \ \ k\ -\ 
\frac{i\eps^2}{2}\int_{-\infty}^\infty\ \Lambda_{\rm eff}(y)\ \dd y\ + \ \O\big(\eps^4\big), \ \ \eps\to 0\ ,
\label{eqnfork}
\end{equation}
uniformly for $k\in K$. By Theorem~\ref{thm:t-conv}, one has
\begin{equation}
\frac{k}{t^{q_\eps}(k)} \ = \ k \ -\ \frac{i\eps^2}{2}\int_{-\infty}^\infty \ \Lambda_{\rm eff}(y)\ \dd y\ + \ \mathcal{O}\left(\eps^3\right), \textrm{ uniformly for } k\in K\ .
\label{eqnfork1}
\end{equation}
Expansions~\eqref{eqnfork} and~\eqref{eqnfork1} imply straightforwardly~\eqref{compare-tsq0iszero1}--\eqref{compare-tsq0iszero3}. \end{proof}

A direct consequence of Corollary~\ref{cor:compare-t-qav-is-0} and the expansion of $t^{\seff}$ implied by Lemma~\ref{Lem:tsigma}, is the following result showing a universal scaled limit of $t^{q_\eps}$, depending on the single parameter, $\int_\RR\Lambda_{\rm eff}$.
\begin{Corollary}[Scaled limit of $t^{q_\eps}$]\label{cor:transm} Let $k=\eps^2\kappa$, with $\kappa \neq\frac{i}{2}\ \int_\RR\Lambda_{\rm eff}$. Then one has
\begin{equation}
 t^{q_\eps}(\eps^2\kappa)\ \to\ t^\star\Big(\kappa;\int_\RR\Lambda_{\rm eff}\Big)\equiv \frac{\kappa}{\ \ \kappa -\frac{i}{2}\ \int_\RR\Lambda_{\rm eff}\ \ }\ 
 {\rm as}\ \ \eps\to0,
 \label{scaled-transm2}
 \end{equation}
 where $t^\star\left(\kappa;m\right)$ is the transmission coefficient associated with the Schr\"odinger operator with attractive $\delta-$function potential well of total mass $m>0$: 
 \[H_{-m\delta}=-\partial_X^2 -m\delta(X).\]
 \end{Corollary}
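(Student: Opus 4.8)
The convergence asserted in~\eqref{scaled-transm2} is already contained in Corollary~\ref{cor:compare-t-qav-is-0}: the second estimate in~\eqref{compare-tsq0iszero3} states precisely that $\big| t^{q_\eps}(\eps^2\kappa) - \kappa/(\kappa - \tfrac{i}{2}\int_\RR\Lambda_{\rm eff})\big| = \O(\eps)$ whenever $\kappa \neq \kappa^\star = \tfrac{i}{2}\int_\RR\Lambda_{\rm eff}$, and letting $\eps\to0$ yields~\eqref{scaled-transm2}. Conceptually, the mechanism is the expansion $k/t^{q_\eps}(k) = k - \tfrac{i\eps^2}{2}\int_\RR\Lambda_{\rm eff} + \O(\eps^3)$ of Theorem~\ref{thm:t-conv}: evaluating at $k=\eps^2\kappa$ and dividing by $\eps^2$ gives $\kappa/t^{q_\eps}(\eps^2\kappa) = \kappa - \tfrac{i}{2}\int_\RR\Lambda_{\rm eff} + \O(\eps)$, so the reciprocal converges to $\kappa/(\kappa - \tfrac{i}{2}\int_\RR\Lambda_{\rm eff})$; the condition $\kappa\neq\kappa^\star$ keeps $\kappa$ bounded away from the simple pole, which is exactly what~\eqref{eq:condition1} encodes at the scaled wavenumber.

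It remains to identify $t^\star(\kappa;m) = \kappa/(\kappa - \tfrac{im}{2})$ with the transmission coefficient of $H_{-m\delta}=-\partial_X^2 - m\delta(X)$, $m=\int_\RR\Lambda_{\rm eff}>0$, which I would carry out by the standard direct computation. Writing the scattering solution with $\kappa$ in the role of wavenumber as $u(X)=e^{i\kappa X}+r\,e^{-i\kappa X}$ for $X<0$ and $u(X)=t\,e^{i\kappa X}$ for $X>0$, continuity of $u$ at $X=0$ gives $1+r=t$, while integrating the equation across the singularity produces the jump relation $u'(0^+)-u'(0^-)=-m\,u(0)$, i.e. $i\kappa t - i\kappa(1-r) = -m t$. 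Eliminating $r$ yields $t(2i\kappa + m) = 2i\kappa$, hence $t = 2i\kappa/(2i\kappa+m) = \kappa/(\kappa - \tfrac{im}{2})$, which is $t^\star(\kappa;m)$ with $m=\int_\RR\Lambda_{\rm eff}$.

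Since the analytic substance is carried entirely by Corollary~\ref{cor:compare-t-qav-is-0}, there is no serious obstacle; the delta computation is elementary, and the only point meriting a word is the degenerate value $\kappa=0$, where both sides of~\eqref{scaled-transm2} vanish ($t^{q_\eps}(0)=0$ by genericity of $V_\eps$, Corollary~\ref{cor:qeps-is-generic}, and $t^\star(0;m)=0$). For intuition I would also note that the emergence of the $\delta$-well is transparent under the rescaling $x=X/\eps^2$, $k=\eps^2\kappa$: the effective potential transforms as $-\eps^{-2}\Lambda_{\rm eff}(X/\eps^2)$, whose total mass $\int_\RR\Lambda_{\rm eff}$ is conserved while its support concentrates at the origin, so $H_\seff$ formally converges to $-\partial_X^2 - \big(\int_\RR\Lambda_{\rm eff}\big)\delta(X)$, consistent with the universal scaled limit.
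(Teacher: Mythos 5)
Your proposal is correct and follows essentially the same route as the paper, which presents this corollary as a direct consequence of Corollary~\ref{cor:compare-t-qav-is-0} (specifically the second estimate in~\eqref{compare-tsq0iszero3}) together with the expansion of $t^{\seff}$ from Lemma~\ref{Lem:tsigma}; your supplementary delta-well computation correctly verifies the identification $t^\star(\kappa;m)=\kappa/(\kappa-\tfrac{im}{2})$.
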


As observed in section~\ref{sec:quicksum}, the poles of the transmission coefficient in the upper half $k-$plane, which must lie on the imaginary axis, correspond to the $L^2$ point eigenvalues.
From our estimates on the transmission coefficient, $t^{q_\eps}(k)$, we further deduce the existence
of a discrete eigenvalue near the edge of the continuous spectrum.
 
\begin{Corollary}[Edge bifurcation of point spectrum from the continuum] \label{cor:edge} ~\\
If $\epsilon$ if sufficiently small, then the transmission coefficient, $t^{q_\eps}(k)$ has a pole in the upper half plane at
\begin{equation}
k_\eps\ = \ i\ \frac{\eps^2}{2}\ \left(\int_\RR\Lambda_{\rm eff}\right) + \mathcal{O}(\eps^3)\ ,\ \ \eps\to 0\ , 
\nn\end{equation}
and therefore 
 $H_{q_\eps}$ has the simple eigenpair
\begin{gather*}
 E_\eps= k_\eps^2= -\frac{\eps^4}{4}\ \left(\int_\RR \Lambda_{\rm eff}\ \right)^2\ +\ \mathcal{O}(\eps^5) ,\ \ \eps\to 0\ , \\
 u_{E_{q_\eps}}(x)\ =\ \mathcal{O}\left(e^{-\sqrt{|E_{q_\eps}|}\ |x|}\right),\ \ |x|\gg1\ .
\end{gather*}
\end{Corollary}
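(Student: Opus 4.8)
The plan is to locate a pole of $t^{q_\eps}(k)$ by applying Rouch\'e's Theorem, using the effective model $H_\seff$ as a comparison. First I would record what is already known about the comparison potential. The function $k/t^\seff(k)$ is analytic in $k$ near the origin, and by~\eqref{eqnfork} it satisfies
\begin{equation}
\frac{k}{t^\seff(k)} \ = \ k \ - \ \frac{i\eps^2}{2}\int_\RR\Lambda_{\rm eff} \ + \ \O(\eps^4),
\nn
\end{equation}
uniformly for $k\in K$. The leading part of this expression, $k - \tfrac{i\eps^2}{2}\int_\RR\Lambda_{\rm eff}$, vanishes precisely at $k^\seff(\eps) = \tfrac{i\eps^2}{2}\int_\RR\Lambda_{\rm eff}$, which lies on the positive imaginary axis (since $\int_\RR\Lambda_{\rm eff}>0$) at distance of order $\eps^2$ from zero. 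So $t^\seff(k)$ itself has a pole there — equivalently, the Wronskian $\mathcal{W}[f^\seff_+,f^\seff_-](k)$ has a simple zero at $k^\seff(\eps)$ — and this is the pole~\eqref{poleofseff} already anticipated in the Introduction.

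Next I would set up the Rouch\'e comparison between the two analytic functions
\begin{equation}
g_\eps(k) \ \equiv \ \frac{k}{t^\seff(k)}, \qquad h_\eps(k) \ \equiv \ \frac{k}{t^{q_\eps}(k)},
\nn
\end{equation}
whose zeros in the upper half plane correspond to the poles of $t^\seff$ and $t^{q_\eps}$ respectively (the factor of $k$ in the numerator is harmless away from $k=0$, and the relevant candidate pole sits at distance $\sim\eps^2$ from the origin). Theorem~\ref{thm:t-conv} gives the key estimate $|g_\eps(k)-h_\eps(k)|\le \eps^3\, M_K\, C(\bracevert V\bracevert,\sup_K|k|)$, uniform on $K$. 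I would take a small circle $\Gamma$ centered at $k^\seff(\eps)$ of radius $c\,\eps^{2+\delta}$ for a suitable $0<\delta<1$; since $g_\eps$ has a simple zero at the center, on $\Gamma$ one has $|g_\eps(k)|\gtrsim \eps^{2+\delta}$ (the derivative of the leading part is $1+\O(\eps^2)$, so the zero is genuinely simple and nondegenerate). Because the perturbation bound $\eps^3$ is smaller than $\eps^{2+\delta}$ once $\delta<1$, Rouch\'e's Theorem applies: $g_\eps$ and $h_\eps$ have the same number of zeros (namely one) inside $\Gamma$. This yields a simple zero $k_\eps$ of $k/t^{q_\eps}(k)$, hence a simple pole of $t^{q_\eps}$, with $k_\eps = k^\seff(\eps)+\O(\eps^{2+\delta})$; optimizing gives $k_\eps = i\tfrac{\eps^2}{2}\int_\RR\Lambda_{\rm eff}+\O(\eps^3)$. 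Squaring produces $E_\eps=k_\eps^2 = -\tfrac{\eps^4}{4}(\int_\RR\Lambda_{\rm eff})^2+\O(\eps^5)$.

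The main obstacle, and the step deserving the most care, is verifying that $g_\eps$ is bounded below by the right power of $\eps$ on the contour $\Gamma$ uniformly as $\eps\to0$ — i.e.\ that the zero of $g_\eps$ is simple and isolated on the correct scale, with no other zeros of $g_\eps$ (no other poles of $t^\seff$) in the relevant window of $K$. This is exactly where I would invoke Simon's result~\cite{Simon76} and the analysis of Appendix~\ref{sec:seff-tools} (via Lemma~\ref{Lem:tsigma} / Corollary~\ref{cor:tseff}): $H_\seff$ is a shallow well of depth $\O(\eps^2)$ and has exactly one negative eigenvalue~\eqref{edge-eig1}, so $t^\seff$ has exactly one pole in the upper half plane, and it is simple. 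Once the contour radius is chosen so that $\eps^3 \ll \eps^{2+\delta} \lesssim |g_\eps|$ on $\Gamma$, the remaining claims follow: the $L^2$ eigenfunction is $u_{E_{q_\eps}}(x)$, its exponential decay rate $e^{-\sqrt{|E_{q_\eps}|}\,|x|}$ coming from the fact that $k_\eps$ is a zero of the Wronskian, so $f^{q_\eps}_+(\cdot;k_\eps)$ and $f^{q_\eps}_-(\cdot;k_\eps)$ are proportional and both decay like $e^{\pm i k_\eps x}=e^{\mp\sqrt{|E_{q_\eps}|}\,|x|}$ at $\pm\infty$, as recorded in Section~\ref{sec:quicksum}.
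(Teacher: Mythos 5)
Your proposal follows essentially the same route as the paper: a Rouch\'e comparison between $k/t^{\seff}(k)$ and $k/t^{q_\eps}(k)$ on a small circle around the approximate pole location, powered by Theorem~\ref{thm:t-conv} and the expansion of Corollary~\ref{cor:tseff}, followed by the standard Wronskian/Jost-solution argument for the eigenfunction decay. The one quantitative slip is your choice of contour radius $c\,\eps^{2+\delta}$ with $\delta<1$: for each fixed $\delta$ this localizes the pole only to within $\O(\eps^{2+\delta})$, and letting $\delta\uparrow 1$ does not recover the stated $\O(\eps^3)$ remainder, since the admissible range of $\eps$ shrinks with $\delta$. The paper instead takes the circle $\gamma=\{|k-\tfrac{i\eps^2}{2}\int\Lambda_{\rm eff}|=C\eps^3\}$ with $C$ chosen larger than the constant $b_K$ in the bound $|k/t^{q_\eps}(k)-f(k)|\le b_K\eps^3$, where $f(k)=k-\tfrac{i\eps^2}{2}\int\Lambda_{\rm eff}$ is the explicit linear comparison function (so $|f|=C\eps^3$ exactly on $\gamma$); comparing both $k/t^{\seff}$ and $k/t^{q_\eps}$ to $f$ then gives the pole within $C\eps^3$ directly. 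Your direct comparison of the two transmission functions works just as well once you set the radius to $C\eps^3$ and note that the zero of $k/t^{\seff}$ sits within $\O(\eps^4)$ of the center with derivative $1+o(1)$. Your appeal to Simon's theorem for global uniqueness of the pole of $t^{\seff}$ is not needed for the local argument (the paper gets uniqueness inside the disk from Rouch\'e and positions the pole on the imaginary axis by self-adjointness), but it is harmless.
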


\begin{proof}[Proof of Corollary~\ref{cor:edge}:]
Let us recall Rouch\'e's Theorem: Let $f$ and $g$ denote analytic functions, defined on an open set $A\subset\CC$. Let $\gamma$ denote a simple loop within A, which is homotopic to a point. If 
 $|g(k)-f(k)|<|f(k)|$ for all $k\in\gamma$, then $f$ and $g$ have the same number of roots inside $\gamma$.

Now let
\[f(k)\ \equiv \ k \ -\ \frac{i\eps^2}{2}\int_{-\infty}^\infty\Lambda_{\rm eff}(y)\dd y\ ,\]
\[ \ g_1(k) \ = \ \frac{k}{t^{\seff}(k)} , \qquad g_2(k) \ = \ \frac{k}{t^{q_\eps}(k)},\] 
and $\gamma \ = \ \{k:\ |k-\frac{i\eps^2}{2}\int\Lambda_{\rm eff}|=C\eps^3\}\subset K.$ 
 These functions are analytic in $k$; see~\cite{DeiftTrubowitz79} and our previous discussion. Theorem~\ref{thm:t-conv} and Corollary~\ref{cor:tseff} imply, respectively,
\[ g_2(k) = f(k) + \mathcal{O}(\eps^3)\ \ {\rm and}\ \ g_1(k) = f(k) +\ \mathcal{O}(\eps^4).\]
Therefore, there exist constants $a_K,\ b_K$, such that for $k\in\gamma$:
\[ \big\vert f(k) \ - \ g_1(k)\big\vert \ \le \ a_K\eps^4 , \qquad \big\vert f(k) \ - \ g_2(k)\big\vert \ \le \ b_K\eps^3 , \qquad \text{and} \qquad \vert f(k) \vert = C\eps^3. \]
Taking $\eps$ sufficiently small and choosing $C$ sufficiently large, Rouch\'e's Theorem implies that both $g_1$ and $g_2$, have unique roots, poles of $t^{\seff}$ and $t^{q_\eps}$, in the set $\{k:\ |k-\frac{i\eps^2}{2}\int\Lambda_{\rm eff}|\leq C\eps^3\}$. By self-adjointness, these poles lie on the positive imaginary axis. Corollary~\ref{cor:edge} now follows. 
\end{proof}

\subsection{Non-generic and non-zero $q_{\rm av}$; example of an oscillatory perturbation of a reflectionless potential}
\label{sec:refless}

As seen above, for the case where $q_{\rm av}\equiv0$ the transmission coefficient 
$t^{q_\eps}(k)$, does not converge to $t^{0}(k)\equiv1$ uniformly in a neighborhood of $k=0$ and the 
obstruction to uniform convergence is the approach, as $\eps\to0$, of a pole of $t^{q_\eps}(k)$ toward $k=0$. Such non-uniform convergence will occur whenever $t^{q_{\rm av}}(0)\ne0$. By~\eqref{tsig-Isig},~\eqref{generic-def}, we can have $t^{q_{\rm av}}(0)\ne0$ if and only if 
 $\mathcal{W}[f^{q_{\rm av}}_+,f^{q_{\rm av}}_-](0)=0$, the case where $q_{\rm av}$ is {\rm non-generic}; see section~\ref{sec:generic}.
 
 One may construct non-generic potentials as follows. Let $v(x)$ denote a potential well, $v(x)\le0$, say a square well, having one eigenstate and which is generic, {\it i.e.} ${\mathcal{W}[f^{v}_+,f^{v}_-](0)\ne0}$ and therefore $t^{v}(0)=0$. Consider the one-parameter family of Schr\"odinger operators defined as ${h_g=-\partial_x^2+gv(x),\ g\ge1}$. As $g$ increases, new eigenvalues of $h_g$ appear as $g$ tranverses discrete values $g_1<g_2<\cdots$. These eigenvalues appear via the crossing of a pole of $t^{gv}(k)$ in the lower half $k-$plane, for $g<g_N$, into the upper half plane for $g>g_N$. 
For $g$ equal to one of these transition values, $g_N$, one has $t^{g_N v}(0)\ne0$. Thus, $g_Nv(x)$ is a non-generic potential. Our analysis gives, for $q_{\rm av}$ taken to be any such non-generic potential, a precise description of the motion of the pole of $t^{q_{\rm av}+q_\eps}$ as it approaches $k=0$ for $\eps$ small. 

The following corollary, the analogue of Corollaries~\ref{cor:compare-t-qav-is-0} 
 and~\ref{cor:transm}, follows as in the case $q_{\rm av}\equiv0$ from Theorem~\ref{thm:t-conv} and Lemma~\ref{Lem:tsigma}.
\begin{Corollary}[Oscillatory perturbation of a reflectionless potential]\label{cor:compare-t-qav-is-reflectionless} ~\\ 
 Let $V_\eps(x)=q_{\rm av}+q_\eps(x)=q_{\rm av}+q(x,x/\eps)$ satisfy Hypotheses {\bf (V)}, let $q_{\rm av}$ be reflectionless, and finally let $k\in K$ satisfy Hypotheses {\bf (K)}. Assume in addition that the following condition holds,
\begin{equation}\label{conditionkeps}
 \left| \frac{k}{t^{q_{\rm av}}(k)} - \frac{i\eps^2}{2}\int_{-\infty}^\infty f^{q_{\rm av}}_-(y;k)\ \Lambda_{\rm eff}(y)\ f^{q_{\rm av}}_+(y;k)\dd y\right|\geq C\min(|k|, \eps^\tau),\ \tau<3,
\end{equation}
then one has for $\eps$ sufficiently small 
 \begin{equation}
\left| t^{q_{\rm av}+\seff}(k) - t^{q_{\rm av}+q_\eps}(k) \right| \ = \ \O\big(\eps^{3-\tau}\big).
\label{compare-tsq0isreflectionless}
\end{equation}
In particular, $k=\eps^2\kappa$ satisfies~\eqref{conditionkeps}, and therefore, by Lemma~\ref{Lem:tsigma}, there is a universal scaled limit of $t^{q_{\rm av}+q_\eps}(\eps^2\kappa)$:
\begin{align}
 t^{q_{\rm av}+q_\eps}(\eps^2\kappa)\ &\to\ \frac{ t^{q_{\rm av}}(0) \ \kappa}{\kappa -\frac{i}2 t^{q_{\rm av}}(0)\ \int_\RR f^{q_{\rm av}}_-(y;0)\ \Lambda_{\rm eff}(y)\ f^{q_{\rm av}}_+(y;0)\dd y\ \ }\nn\\
 &\ \ =\ \ \frac{ t^{q_{\rm av}}(0)\ \kappa}{\kappa -\frac{i}2 (1+r_-^{q_{\rm av}}(0))\ 
 \int_\RR (f^{q_{\rm av}}_-(y;0))^2\ \Lambda_{\rm eff}(y)\ \dd y\ \ },\ \ \ 
 {\rm as}\ \ \eps\to0
 \label{universal}\end{align}
 provided $\kappa\neq \kappa^\star \equiv \frac{i}2 t^{q_{\rm av}}(0)\ \int_\RR f^{q_{\rm av}}_-(y;0)\ \Lambda_{\rm eff}(y)\ f^{q_{\rm av}}_+(y;0)\dd y$. 
\footnote{Note that $\kappa^\star$ lies in the positive imaginary axis. Indeed, $f^{q_{\rm av}}_-(\cdot;0)\in\RR$ and $r_-(0)\in\RR$ by~\eqref{reality-condition}, and one has $r_-(0)+1\geq0$, since $|r_-(0)|\leq1$;  see~\eqref{tr-energy}.}
The last equality in~\eqref{universal} follows from~\eqref{eq:defrt+}.
 
The transmission coefficient, $t^{q_{\rm av}+\seff}(k)$ has a pole in the upper half plane at $k_{q_{\rm av}+\seff}$ the solution of the implicit equation: 
\begin{equation}
 k \ =\ i \frac{\eps^2}{2}\ t^{q_{\rm av}}(k)\ \int_{-\infty}^\infty f^{q_{\rm av}}_-(y;k)\ \Lambda_{\rm eff}(y)\ f^{q_{\rm av}}_+(y;k)\dd y\ +\ \mathcal{O}(\eps^4). 
\label{keps-gen}
\end{equation}
It follows that $H_{q_{\rm av}+\seff}$ has an eigenvalue at $E^\seff=(k_{q_{\rm av}+\seff}(\eps))^2<0$. 
Finally, Lemma~\ref{Lem:tsigma} and an application of Rouch\'e's Theorem imply that 
$t^{q_{\rm av}+q_\eps}(k)$, has a pole near $k^{q_{\rm av}+\seff}(\eps)$, on the positive imaginary axis, and a bound state 
\[E^{q_{\rm av}+q_\eps}(\eps)\approx\ E^{q_{\rm av}+\seff}(\eps)\ =\ \left[k^{q_{\rm av}+\seff}(\eps)\right]^2\ <\ 0.\] 
\end{Corollary}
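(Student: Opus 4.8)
The plan is to run, for reflectionless $q_{\rm av}$, the same argument used for $q_{\rm av}\equiv0$ in Corollaries~\ref{cor:compare-t-qav-is-0}--\ref{cor:edge}, with the free problem replaced by the problem for $q_{\rm av}$ and with the common reference function
\[ f(k)\ \equiv\ \frac{k}{t^{q_{\rm av}}(k)}\ -\ \frac{i\eps^2}{2}\int_{-\infty}^\infty f^{q_{\rm av}}_-(y;k)\,\Lambda_{\rm eff}(y)\,f^{q_{\rm av}}_+(y;k)\dd y, \]
whose modulus is bounded below on $K$ precisely by Hypothesis~\eqref{conditionkeps}. The first step is to produce two expansions about $f(k)$, uniform for $k\in K$. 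Applying relation~\eqref{IVW-def} of Corollary~\ref{cor:relateW} with $V=q_{\rm av}+\seff$ and $W=q_{\rm av}$, for which $V-W=\seff=-\eps^2\Lambda_{\rm eff}$, and then replacing $f^{q_{\rm av}+\seff}_+$ by $f^{q_{\rm av}}_+$ inside the integral at a cost of $\O(\eps^4)$ (since the Jost solutions differ by $\O(\eps^2)$ and the prefactor is $\eps^2$; this is the content of Lemma~\ref{Lem:tsigma}), gives $k/t^{q_{\rm av}+\seff}(k)=f(k)+\O(\eps^4)$. Theorem~\ref{thm:t-conv} then yields $k/t^{q_{\rm av}+q_\eps}(k)=k/t^{q_{\rm av}+\seff}(k)+\O(\eps^3)=f(k)+\O(\eps^3)$.

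Writing $g_1=k/t^{q_{\rm av}+\seff}(k)$ and $g_2=k/t^{q_{\rm av}+q_\eps}(k)$, the identity $t^{q_{\rm av}+\seff}-t^{q_{\rm av}+q_\eps}=k\,(g_2-g_1)/(g_1g_2)$ combines the numerator bound $g_2-g_1=\O(\eps^3)$ with the lower bounds $|g_1|,|g_2|\gtrsim\min(|k|,\eps^\tau)$ supplied by \eqref{conditionkeps} (valid for $\eps$ small, since the $\O(\eps^3)$, $\O(\eps^4)$ remainders are dominated by $\eps^\tau$ for $\tau<3$); the same bookkeeping as in Corollary~\ref{cor:compare-t-qav-is-0} then gives \eqref{compare-tsq0isreflectionless}. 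For the scaled limit I substitute $k=\eps^2\kappa$ (which satisfies \eqref{conditionkeps} for $\kappa\neq\kappa^\star$), use $t^{q_{\rm av}}(k)\to t^{q_{\rm av}}(0)\neq0$ and $f^{q_{\rm av}}_\pm(y;k)\to f^{q_{\rm av}}_\pm(y;0)$ as $\eps\to0$, divide the expansion of $k/t^{q_{\rm av}+q_\eps}$ by $\eps^2$, and pass to the limit to obtain the first expression in \eqref{universal}; the second equality follows by evaluating \eqref{eq:defrt+}--\eqref{eq:defrt-} at $k=0$, which expresses $f^{q_{\rm av}}_+(\cdot;0)$ as a real multiple of $f^{q_{\rm av}}_-(\cdot;0)$.

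To locate the pole I solve $f(k)=0$, i.e.\ the fixed-point equation \eqref{keps-gen}. Its right-hand side is $\O(\eps^2)$ and, by the footnote to the statement, the leading coefficient $\int f^{q_{\rm av}}_-(y;0)\,\Lambda_{\rm eff}(y)\,f^{q_{\rm av}}_+(y;0)\dd y$ is real and nonzero; a contraction-mapping argument on a disk of radius $\O(\eps^2)$ about the leading value $\tfrac{i}{2}\eps^2\,t^{q_{\rm av}}(0)\int f^{q_{\rm av}}_-(y;0)\Lambda_{\rm eff}(y)f^{q_{\rm av}}_+(y;0)\dd y$ then produces a unique root $k^{q_{\rm av}+\seff}(\eps)$, which reality~\eqref{reality-condition} and self-adjointness of $H_{q_{\rm av}+\seff}$ force onto the positive imaginary axis, so $E^\seff=(k^{q_{\rm av}+\seff}(\eps))^2<0$. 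Since $f$ has a simple zero there with $f'=t^{q_{\rm av}}(0)^{-1}+\O(\eps^2)=\O(1)$, on the circle $\gamma=\{|k-k^{q_{\rm av}+\seff}(\eps)|=C\eps^3\}$ one has $|k/t^{q_{\rm av}+\seff}(k)|\gtrsim C\eps^3$, which for $C$ large dominates the $\O(\eps^3)$ bound of Theorem~\ref{thm:t-conv} on $|k/t^{q_{\rm av}+\seff}-k/t^{q_{\rm av}+q_\eps}|$; Rouch\'e's Theorem, exactly as in the proof of Corollary~\ref{cor:edge}, then yields a pole of $t^{q_{\rm av}+q_\eps}$ inside $\gamma$, hence the asserted bound state.

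The genuinely delicate input is Lemma~\ref{Lem:tsigma}: controlling the replacement of $f^{q_{\rm av}+\seff}_+$ by $f^{q_{\rm av}}_+$ in $I^{[q_{\rm av}+\seff,q_{\rm av}]}(k)$ to $\O(\eps^4)$ \emph{uniformly} for $k\in K$ up to $k=0$, where the Jost solutions of the reflectionless (hence non-generic) potential $q_{\rm av}$ have non-decaying zero-energy limits and must be estimated in the exponentially weighted norms of Hypotheses~{\bf(V)}. This, together with Theorem~\ref{thm:t-conv}, is assumed here; granting them, the remaining work — the algebra under \eqref{conditionkeps}, the limit \eqref{universal}, and the fixed-point and Rouch\'e arguments — parallels the $q_{\rm av}\equiv0$ case verbatim. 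A secondary point needing care is that $\kappa^\star$ lie strictly off the real axis, so that the pole and the scaled limit are well separated from real $\kappa$; this is precisely the non-vanishing (with correct sign) of $\int f^{q_{\rm av}}_-\Lambda_{\rm eff}f^{q_{\rm av}}_+\dd y$ recorded in the footnote.
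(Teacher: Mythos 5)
Your proposal is correct and follows essentially the same route as the paper, which itself gives no separate proof of this corollary but states that it ``follows as in the case $q_{\rm av}\equiv0$ from Theorem~\ref{thm:t-conv} and Lemma~\ref{Lem:tsigma}'': you combine the $\O(\eps^4)$ expansion of $k/t^{q_{\rm av}+\seff}$ from Lemma~\ref{Lem:tsigma} with the $\O(\eps^3)$ comparison of Theorem~\ref{thm:t-conv}, use the lower bound \eqref{conditionkeps} for the algebra, and locate the pole via Rouch\'e exactly as in Corollary~\ref{cor:edge}. Your explicit fixed-point step for \eqref{keps-gen} and your flagging of the uniformity of Lemma~\ref{Lem:tsigma} near $k=0$ for the non-generic $q_{\rm av}$ are faithful elaborations of what the paper leaves implicit.
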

\noindent We now consider this result in the context of a particular family of potentials. Consider the family of operators
$ h(g)= -\partial_x^2-g\ {\rm sech}^2(x)$.
 Let $g_N=N(N+1),\ N=0,1,2,\dots$. For ${g_N\le g<g_{N+1}}$, the operator $h(g)$ has precisely $N$- bound states. At the transition values, $h(g_N)$ has a zero energy resonance and $t^{h(g_N)}(0)\ne0$.
 The family of potentials for the values $g_N,\ N=0,1,2,\dots$, are called {\it reflectionless potentials} for which
$ |t(k)| \ \equiv \ 1$ and $ r_\pm(k) \ \equiv \ 0, \ \ k\in\RR$;
see~\cite{AblowitzSegur}. 
These potentials are well-known for their role as soliton solutions of the Korteweg-de Vries equation.

Consider the case of the one-soliton potential, corresponding to $N=1$ in the above discussion. Here, 
\[V_1(x) \ = \ -2\rho^2\text{sech}^2(\rho(x-x_0)), \quad \text{where $x_0$ satisfies } C =2\rho \exp(2\rho x_0).\]
In this case, the transmission coefficient satisfies
\[ \frac1{t^{V_1}(k)} \ = \ \lim\limits_{x\to -\infty} f^{V_1}_+(x;k)e^{-ikx} \ = \ \frac{k-i\rho}{k+i\rho}.\]
As for the Jost solutions, one has (setting $x_0=0$ for simplicity)
\[ f^{V_1}_+(x;k)=e^{ikx}\left(1- \frac{2i\rho}{k+i\rho}\frac{ e^{-x} }{ e^{x}+ e^{-x}} \right).\]
Since the $V_1$ is reflectionless, one has by~\eqref{eq:defrt-}
\[ f^{V_1}_-(x;k)=0+\frac{1}{t^{V_1}(k) }f^{V_1}_+(x;-k) \ = \ \frac{1}{t^{V_1}(k) }e^{-ikx}\left(1- \frac{2i\rho}{-k+i\rho}\frac{ e^{-x} }{ e^{x}+ e^{-x}} \right).\]

In this case, there exists a pole of $t^{{V_1}+\seff}(k)$, and similarly a pole of $t^{{V_1}+q_\eps}(k)$, located around
\begin{align*}
k \ & = \ i\frac{\eps^2}{2}\int_{-\infty}^\infty t^{V_1}(0) f^{V_1}_-(y;0)\Lambda_{\rm eff}(y)f^{V_1}_+(y;0)\dd y \ + \ \O(\eps^3)\ , \\
& = \ i\frac{\eps^2}{2}\int_{-\infty}^\infty \tanh^2(y)\Lambda_{\rm eff}(y)\dd y \ + \ \O(\eps^3)\ , \ \ \eps\to 0.
\end{align*}
Finally, $H_{{V_1}+q_\eps}$ and $H_{{V_1}+\seff}$ have a bound state with energy
\begin{equation}
E \ = \ -\frac{\eps^4}4 \left(\int_\RR \tanh^2(y) \Lambda_{\rm eff}(y)\ \dd y\ \right)^2 \ + \ \O(\eps^5)\ , \ \ \eps\to 0.
\nn\end{equation}

\subsection{Results for generic potentials, $q_{\rm av}$, and their highly oscillatory perturbations }
\label{sec:genericcase}

In this section, we study the case where $q_{\rm av}$ is a generic potential in the sense of section~\ref{sec:quicksum}. In this case $t^{q_{\rm av}+q_\eps}(k)$ converges uniformly to $t^{q_{\rm av}}(k)$ in a neighborhood
of $k=0$~\cite{DucheneWeinstein:11}. More precise information is contained in the following Corollary, a direct consequence of Lemma~\ref{Lem:tsigma}, and Theorem~\ref{thm:t-conv}.
\begin{Corollary}\label{Cor:generic}
 Let $V_\eps(x)=q_{\rm av}(x)+q_\eps(x)=q_{\rm av}(x)+q(x,x/\eps)$ satisfy Hypotheses~{\bf (V)} with $q_{\rm av}$ generic, and $k\in K$ satisfy Hypotheses~{\bf (K)}. Then for $k$ and $\eps$ small enough, one has
 \begin{align}
 \big\vert t^{q_{\rm av}+\seff}(k) \big\vert \ &\leq \ C_0 |k|, \label{prop-generic-1} \\
 \big\vert t^{q_{\rm av}+q_\eps}(k) \big\vert \ &\leq \ C_0 |k|, \label{prop-generic-2}\\
 \big\vert t^{q_{\rm av}+q_\eps}(k) -t^{q_{\rm av}+\seff}(k) \big\vert \ &\leq \ C_0\eps^3 |k|,\label{prop-generic-3}
 \end{align}
 with $C_0=C_0(M_K)$, $M_K=\max(1,\sup_{k\in K}|t^{q_{\rm av}}(k)|)$.
\end{Corollary}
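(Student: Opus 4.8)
The plan is to reduce all three estimates to a single two-sided control on $k/t$, obtained by transporting the nonvanishing of $k/t^{q_{\rm av}}(k)$ at $k=0$ — which is exactly the genericity hypothesis — to the two perturbed potentials $q_{\rm av}+\seff$ and $q_{\rm av}+q_\eps$. By Definition~\ref{def:generic}, genericity of $q_{\rm av}$ means
\[
\frac{k}{t^{q_{\rm av}}(k)} \ \longrightarrow\ \frac{I^{q_{\rm av}}(0)}{2i}\ \neq\ 0 \qquad \text{as } k\to0 ,
\]
so by continuity there are constants $\delta>0$ and $c>0$ with $\big|k/t^{q_{\rm av}}(k)\big|\ge c$ for all $|k|\le\delta$, $k\in K$. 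I would fix this $\delta$ and restrict throughout to $|k|\le\delta$, which is the regime meant by ``$k$ small enough.''

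First I would transfer this lower bound to the perturbed problems. Lemma~\ref{Lem:tsigma} writes $k/t^{q_{\rm av}+\seff}(k)$ as $k/t^{q_{\rm av}}(k)$ plus a correction of size $\O(\eps^2)$ coming from the shallow well $\seff=-\eps^2\Lambda_{\rm eff}$, uniformly for $k\in K$; Theorem~\ref{thm:t-conv} then bounds $k/t^{q_{\rm av}+q_\eps}(k)-k/t^{q_{\rm av}+\seff}(k)$ by $\eps^3 M_K\,C(\big\bracevert V\big\bracevert,\sup_{k\in K}|k|)$. Choosing $\eps_0$ small enough that both corrections stay below $c/2$ on $K$, the triangle inequality gives
\[
\Big|\frac{k}{t^{q_{\rm av}+\seff}(k)}\Big|\ \ge\ \frac{c}{2}, \qquad \Big|\frac{k}{t^{q_{\rm av}+q_\eps}(k)}\Big|\ \ge\ \frac{c}{2}, \qquad |k|\le\delta,\ \ 0<\eps<\eps_0 .
\]
Inverting these yields $\big|t^{q_{\rm av}+\seff}(k)\big|\le C_0|k|$ and $\big|t^{q_{\rm av}+q_\eps}(k)\big|\le C_0|k|$ with $C_0=2/c$, which are precisely~\eqref{prop-generic-1}--\eqref{prop-generic-2}; since $c$ is fixed data of $q_{\rm av}$ and the threshold for $\eps_0$ is controlled by $M_K$, one checks $C_0$ may be taken to depend only on $M_K$ (and the fixed $\big\bracevert V\big\bracevert$).

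For the difference estimate~\eqref{prop-generic-3} I would use the elementary identity
\[
t^{q_{\rm av}+q_\eps}(k) - t^{q_{\rm av}+\seff}(k) \ = \ \frac{t^{q_{\rm av}+q_\eps}(k)\,t^{q_{\rm av}+\seff}(k)}{k}\left(\frac{k}{t^{q_{\rm av}+\seff}(k)}-\frac{k}{t^{q_{\rm av}+q_\eps}(k)}\right).
\]
Bounding each transmission factor in the numerator by $C_0|k|$ via~\eqref{prop-generic-1}--\eqref{prop-generic-2}, so that the two powers of $|k|$ cancel the $1/|k|$, and bounding the parenthesis by $\eps^3 M_K\,C(\big\bracevert V\big\bracevert)$ via Theorem~\ref{thm:t-conv}, one obtains $\big|t^{q_{\rm av}+q_\eps}(k)-t^{q_{\rm av}+\seff}(k)\big|\le C_0\,\eps^3|k|$, as claimed.

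The only delicate step is the first one: ensuring the margin $\big|k/t^{q_{\rm av}}(k)\big|\ge c$ survives both perturbations. This is exactly where genericity is essential — it excludes the case $t^{q_{\rm av}}(0)\ne0$ that produces the edge bifurcation and non-uniform limit analyzed earlier in the paper — and it is what forces the smallness of $\eps$, so that the $\O(\eps^2)$ and $\O(\eps^3)$ corrections cannot erase the lower bound $c$. Once that margin is secured, the remaining arguments are the routine algebra of passing between $t$ and $k/t$.
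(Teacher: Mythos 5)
Your proposal is correct and follows essentially the same route as the paper: genericity gives a uniform lower bound on $|k/t^{q_{\rm av}}(k)|$ near $k=0$, Lemma~\ref{Lem:tsigma} and Theorem~\ref{thm:t-conv} show the $\O(\eps^2)$ and $\O(\eps^3)$ corrections cannot destroy that bound, and the identity relating $t_1-t_2$ to $k/t_1-k/t_2$ converts the Theorem~\ref{thm:t-conv} estimate into~\eqref{prop-generic-3}. The only (immaterial) difference is that you obtain~\eqref{prop-generic-2} directly by transporting the lower bound to $q_{\rm av}+q_\eps$, whereas the paper deduces it from~\eqref{prop-generic-1} together with the difference bound.
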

\begin{proof}
In the case of generic potentials, $q_{\rm av}$, we know from~\cite{DeiftTrubowitz79} that there exists a constant $a_{q_{\rm av}}$ such that
\[ t^{q_{\rm av}}(k) \ = \ a_{q_{\rm av}} k \ + \ o(k), \quad \text{ as } k\to 0.\]
 It follows that for $k$ sufficiently small, there exists a constant $C_0$ such that 
$ \left|k\ (t^{q_{\rm av}}(k))^{-1} \right| \ \geq \ C_0>0$. 
Estimate~\eqref{prop-generic-1} follows then straightforwardly from Lemma~\ref{Lem:tsigma}, when $\eps$ is sufficiently small.
 Now, applying Theorem~\ref{thm:t-conv}, one has
\begin{align*}
 \big\vert t^{q_{\rm av}+\seff}(k)-t^{q_{\rm av}+q_\eps}(k) \big\vert 
&=  \left| \dfrac{k}{t^{q_{\rm av}+\seff}(k) }  -  \dfrac{k}{ t^{q_{\rm av}+q_\eps}(k) } \right|\left| \dfrac{t^{q_{\rm av}+\seff}(k)\ t^{q_{\rm av}+q_\eps}(k) }{k} \right| \\
 &\leq \ C_0\eps^3 \big\vert t^{q_{\rm av}+q_\eps}(k) \big\vert.\end{align*}
Estimate~\eqref{prop-generic-2} and then ~\eqref{prop-generic-3} follow easily. This concludes the proof. \end{proof}

\section{ Behavior of the transmission coefficient, uniformly in ${k\in\RR}$}
\label{sec:k-real}

In this section we focus on the properties of $t^{q_\eps}(k)$, which hold uniformly in $k\in\RR$. 
The results presented in section~\ref{sec:quicksum} are valid for $k\in\RR$, and under the less stringent condition: 
 $V\in \mathcal{L}^{1}_2(\RR)\ =\ \{V: (1+|x|)^2 V(x)\in L^1(\RR)\}$. 
Most of these results can be found in~\cite{DeiftTrubowitz79}. Our required bounds on the Jost solutions, $f_\pm^V$ are given in Lemma~\ref{Lem:f-gen-k-real}.

Since $k$ is constrained to the real axis, we find that we can relax the assumption of exponential decay on the potential $V_\eps=V(x,x/\eps)$. 

\noindent {\bf Hypotheses (V'):} {\em $V(x,y)$ is a real-valued potential of the form
\[V(x,y)\ = \ q_{\rm av}(x)+ q(x,y) \ = \ q_{\rm av}(x) \ + \ \sum_{j\ne0}\ q_j(x)\ e^{2\pi i\lambda_j y}, \]
such that the sequence of non-zero (distinct) frequencies $\{\lambda_j\}_{j\in\mathbb{Z}\setminus\{0\}}$ satisfies~\eqref{hyp-lambda}, and the coefficients $\{q_j(x)\}_{j\in\mathbb{Z}}$, satisfy the decay and regularity assumptions
 \begin{equation}
\big\bracevert\hspace{-6pt}\big\bracevert V \big\bracevert\hspace{-6pt}\big\bracevert \ \equiv \ \big|q_{\rm av}\big|_{\mathcal{W}^{1,1}_2} \ + \ \sum_{j\in\mathbb{Z}\setminus\{0\}} \big|q_j\big|_{\mathcal{W}^{3,1}_3}\ <\ \infty.
 \label{qj-decay-reg-real}
 \end{equation}
}
 We first investigate the difference between the transmission coefficients $t^{q_{\rm av}+q_\eps}(k)$ and $t^{q_{\rm av}+\seff}(k)$, where $\seff$ is defined as in Theorem~\ref{thm:t-conv}. 
 %
 %
 The proof of the following theorem is analogous to that of Theorem~\ref{thm:t-conv} (section~\ref{sec:hard}). We omit the proof for the sake of brevity.
 
 \begin{theorem}[Transmission coefficient, $t^{V_\eps}(k)$, for $k\in\RR$]\label{thm:t-conv-real}
 Assume $V_\eps(x)=V(x,x/\eps)$ satisfies Hypotheses {\bf (V')}. Assume $k\in \RR$, $|k|\leq1$. Then, the following holds:
\begin{enumerate}[(1)]
\item There exists $\eps_0>0$ such that for all $|\eps|<\eps_0$, $t^{q_{\rm av}+q_\eps}(k)$
 is uniformly approximated by the transmission coefficient, $t^{q_{\rm av}+\seff}(k)$, for $H_{q_{\rm av}+\seff}$. Here $\seff(x)$ denotes the 
\underline{{\it effective potential well}} defined in~\eqref{sigma-eff}.
%

\noindent Moreover, there is a constant $C(\big\bracevert\hspace{-6pt}\big\bracevert V \big\bracevert\hspace{-6pt}\big\bracevert) $, independent of $\eps$ and $k$, such that
\begin{equation}
\sup_{k\in \RR, \ |k|\le1} \left| \dfrac{k}{t^{q_{\rm av}+\seff}(k) }  -  \dfrac{k}{ t^{q_{\rm av}+q_\eps}(k) }\ \right|  \le \eps^3\ C(\big\bracevert\hspace{-6pt}\big\bracevert V \big\bracevert\hspace{-6pt}\big\bracevert)\ \max(1,\sup_{k\in K}|t^{q_{\rm av}}(k)|) \le \eps^3\ C(\big\bracevert\hspace{-6pt}\big\bracevert V \big\bracevert\hspace{-6pt}\big\bracevert).
 \label{t-diff-smallk-real}
\end{equation}
\item Assume $q_{av}\equiv0$, so that $H_{V_\eps}=-\partial_x^2+q(x,x/\eps)$, where $y\mapsto q(x,y)$ has mean zero. Then, applying~\eqref{t-diff-smallk-real} and Corollary~\ref{cor:tseff} we have
\begin{equation}
t^{q_\eps}(k)\ =\ \frac{k}{k-\frac{i}{2}\ \eps^2\ \int_\mathbb{R} \Lambda_{\rm eff}\ +\ \mathcal{O}(\eps^3)}
\label{teps-real}
\end{equation}

\end{enumerate}
\end{theorem}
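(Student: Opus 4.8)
The plan is to mirror the proof of Theorem~\ref{thm:t-conv} carried out in section~\ref{sec:hard}, replacing the exponentially weighted, complex-$k$ Jost estimates of Appendix~\ref{sec:jost-tools} by their algebraically weighted, real-$k$ counterparts from Lemma~\ref{Lem:f-gen-k-real}. The point of departure is the comparison identity~\eqref{IVW-def} of Corollary~\ref{cor:relateW}, applied with $V=V_\eps=q_{\rm av}+q_\eps$ and the trial background $W=q_{\rm av}+\seff$. Since $V-W=q_\eps-\seff$, this yields
\begin{equation}
\frac{k}{t^{q_{\rm av}+\seff}(k)}-\frac{k}{t^{q_{\rm av}+q_\eps}(k)} \ = \ \frac{1}{2i}\int_{-\infty}^\infty f^{q_{\rm av}+\seff}_-(y;k)\,\big(q_\eps-\seff\big)(y)\,f^{V_\eps}_+(y;k)\dd y,
\nn\end{equation}
so that the entire estimate~\eqref{t-diff-smallk-real} reduces to showing that this integral is $\O(\eps^3)$ uniformly for $k\in\RR$, $|k|\le1$.

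First I would expand $f^{V_\eps}_+$ through the Volterra equation~\eqref{eq:Volterra1} relative to $W=q_{\rm av}+\seff$, generating a Born-type series in powers of the perturbation $q_\eps-\seff$; convergence is automatic from the Volterra structure, with uniform iterate bounds supplied by Lemma~\ref{Lem:f-gen-k-real}. The perturbation splits into a smooth, $\O(\eps^2)$ part $-\seff=\eps^2\Lambda_{\rm eff}$ and the oscillatory, $\O(1)$ part $q_\eps(x)=\sum_{j\ne0}q_j(x)e^{2\pi i\lambda_j x/\eps}$. The mechanism producing $\O(\eps^3)$ is the \emph{normal form} cancellation: every factor of $q_\eps$ paired against smooth Jost factors is treated by repeated integration by parts, each step gaining a factor $\eps/\lambda_j$ and leaving a nonresonant, still-oscillatory remainder; the frequency separation~\eqref{hyp-lambda} together with the three weighted derivatives permitted by Hypotheses~{\bf (V')} let me iterate this until the remainder is $\O(\eps^3)$. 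The sole obstruction to smallness is the resonant second-order term in which a frequency $\lambda_j$ meets $\lambda_{-j}=-\lambda_j$, producing a nonoscillatory contribution of size $\eps^2$. The definition~\eqref{sigma-eff}--\eqref{Lambda-def-aper}, $\Lambda_{\rm eff}=-\tfrac{1}{(2\pi)^2}\sum_{j\ne0}|q_j|^2/\lambda_j^2$, is exactly the choice for which the explicit $-\seff$ term cancels this resonant contribution, leaving only $\O(\eps^3)$.

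The step I expect to be the main obstacle is securing uniformity in $k$ down to $k=0$ with only algebraic decay. The kernel in~\eqref{eq:Volterra1} carries the factor $1/\mathcal{W}[f^W_+,f^W_-]=-t^W(k)/(2ik)$, which is singular at $k=0$ precisely in the non-generic and $q_{\rm av}\equiv0$ regimes of interest, where $t^W(0)\ne0$; the compensating zero of the numerator $f^W_+(y)f^W_-(z)-f^W_-(y)f^W_+(z)$ at $k=0$ renders the singularity removable, but proving that the resulting bounds hold uniformly on $[-1,1]$, together with the $k$-uniform control of the derivatives of $f^W_\pm$ required for the integrations by parts, is the technical core. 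This is where Lemma~\ref{Lem:f-gen-k-real} does the work that the complex, exponentially weighted estimates did for Theorem~\ref{thm:t-conv}.

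Part~(2) is then immediate. Specializing to $q_{\rm av}\equiv0$, the estimate~\eqref{t-diff-smallk-real} reads $\tfrac{k}{t^{q_\eps}(k)}=\tfrac{k}{t^{\seff}(k)}+\O(\eps^3)$, and Corollary~\ref{cor:tseff} supplies $\tfrac{k}{t^{\seff}(k)}=k-\tfrac{i\eps^2}{2}\int_\RR\Lambda_{\rm eff}+\O(\eps^4)$. Adding these gives $\tfrac{k}{t^{q_\eps}(k)}=k-\tfrac{i\eps^2}{2}\int_\RR\Lambda_{\rm eff}+\O(\eps^3)$, and inverting yields~\eqref{teps-real}.
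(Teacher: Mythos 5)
Your proposal is correct and follows exactly the route the paper intends: the paper omits this proof, stating only that it is ``analogous to that of Theorem~\ref{thm:t-conv}'' with the exponentially weighted complex-$k$ estimates replaced by the algebraically weighted real-$k$ bounds of Lemma~\ref{Lem:f-gen-k-real} (and the real-$k$ part of Lemma~\ref{Lem:m}), which is precisely what you do --- same point of departure~\eqref{IVW-def} with $W=q_{\rm av}+\seff$, same normal-form cancellation of the resonant $\lambda_j+\lambda_{-j}=0$ term by the choice of $\seff$, and the same removable-singularity issue for $\mathcal{M}(x,y;k)$ at $k=0$ correctly identified as the technical core. Part~(2) is handled exactly as in the paper, via Corollary~\ref{cor:tseff}.
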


In the following, we are able to control the difference between $t^{q_{\rm av}+q_\eps}(k)$ and $t^{q_{\rm av}+\seff}(k)$, for large real wave number, $|k|\geq1$. This allows, in particular, control of the difference between $t^{q_{\rm av}+q_\eps}(k)$ and $t^{q_{\rm av}+\seff}(k)$, when the averaged potential $q_{\rm av}\equiv0$, {\em uniformly in $k\in\RR$}.

\begin{Proposition}\label{prop:large-k1}
Let $V_\eps\equiv V(x,x/\eps)\equiv q_{\rm av}+q_\eps$ with $V$ satisfying Hypotheses {\bf (V')}, and $\sigma^\eps(x)$ denote any potential for which 
\begin{equation}
 \int|\sigma^\eps(y)|(1+|y|) \dd y \le \eps^2\ C_\sigma
 \nn\end{equation}
 Then, for $k\in\RR\setminus\{0\}$, one has
 \begin{equation}
 \left|\ t^{q_{\rm av}+q_\eps}(k)-t^{q_{\rm av}+\sigma^\eps}(k)\ \right|\ \le\ \eps^2\ |k|^{-1}\ C(\big\bracevert\hspace{-6pt}\big\bracevert V \big\bracevert\hspace{-6pt}\big\bracevert,C_\sigma)\ ,
 \label{large-k-bound}
 \end{equation}
where $\big\bracevert\hspace{-6pt}\big\bracevert V \big\bracevert\hspace{-6pt}\big\bracevert$ is defined in~\eqref{qj-decay-reg-real}. 
\end{Proposition}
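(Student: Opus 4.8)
The plan is to deduce the estimate from the two-potential identity of Corollary~\ref{cor:relateW}. Setting $V=q_{\rm av}+q_\eps$ and $W=q_{\rm av}+\sigma^\eps$, so that $V-W=q_\eps-\sigma^\eps$, relation~\eqref{IVW-def} gives $\frac{k}{t^V(k)}-\frac{k}{t^W(k)}=-\frac{1}{2i}I^{[V,W]}(k)$ with $I^{[V,W]}(k)=\int_\RR f^W_-(y;k)(q_\eps-\sigma^\eps)(y)f^V_+(y;k)\dd y$. Rearranging yields $t^V(k)-t^W(k)=\frac{t^V(k)\,t^W(k)}{2ik}I^{[V,W]}(k)$, and since $k\in\RR$ forces $|t^V(k)|\le1$ and $|t^W(k)|\le1$ by~\eqref{tr-energy}, we obtain $|t^V(k)-t^W(k)|\le\frac{1}{2|k|}\,|I^{[V,W]}(k)|$. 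Thus the factor $|k|^{-1}$ in~\eqref{large-k-bound} is already accounted for, and everything reduces to the uniform bound $|I^{[V,W]}(k)|\le\eps^2\,C$ for $k\in\RR\setminus\{0\}$, with $C$ depending only on the norm~\eqref{qj-decay-reg-real} and $C_\sigma$.

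First I would record the structural cancellation of the exponential factors: writing $f^V_+(y;k)=e^{iky}m^V_+(y;k)$ and $f^W_-(y;k)=e^{-iky}m^W_-(y;k)$, one has $f^W_-(y;k)f^V_+(y;k)=m^W_-(y;k)m^V_+(y;k)$, a product of normalized Jost functions which, with their $y$-derivatives, are bounded for $k\in\RR$ by Lemma~\ref{Lem:f-gen-k-real} in terms of~\eqref{qj-decay-reg-real} (note that $m^W_-$ is benign, since the oscillatory part $\sigma^\eps$ of $W$ is already $O(\eps^2)$). I then split $I^{[V,W]}=I_\sigma+I_q$, with $I_\sigma=-\int m^W_-\,\sigma^\eps\,m^V_+\dd y$ and $I_q=\int m^W_-\,q_\eps\,m^V_+\dd y$. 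The term $I_\sigma$ is immediate: bounding the Jost factors and invoking the hypothesis $\int|\sigma^\eps|(1+|y|)\dd y\le\eps^2 C_\sigma$ gives $|I_\sigma|\le\eps^2\,C$.

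The substance is the oscillatory term $I_q=\sum_{j\ne0}\int m^W_-(y;k)\,q_j(y)\,m^V_+(y;k)\,e^{2\pi i\lambda_j y/\eps}\dd y$. Here I would integrate by parts twice in $y$: the antiderivative of $e^{2\pi i\lambda_j y/\eps}$ carries a factor $\eps/(2\pi i\lambda_j)$ with $|\lambda_j|\ge\theta$ by~\eqref{hyp-lambda}, so each integration by parts extracts one power of $\eps$ and two produce the required $\eps^2$, the regularity $q_j\in\mathcal{W}^{3,1}_3$ supplying the derivatives shifted onto $q_j$. Summability of the resulting $j$-series follows from the frequency gap~\eqref{hyp-lambda} (which makes $\sum_j|\lambda_j|^{-2}$ converge) together with finiteness of~\eqref{qj-decay-reg-real}. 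The computation is the real-$k$ transcription of the normal-form analysis proving Theorem~\ref{thm:t-conv}.

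The main obstacle is that the second integration by parts lands a second derivative on the Jost factors, and via the Schr\"odinger equation $(m^V_+)''=-2ik\,(m^V_+)'+V\,m^V_+$ this reintroduces both the full potential $V=q_{\rm av}+q_\eps$, whose oscillatory part $q_\eps$ is only $O(1)$, and an explicit factor of $k$; one must show these do not spoil the uniform-in-$k$ bound $|I_q|\le\eps^2\,C$. The apparent factor $k$ is compensated because $(m^V_+)'(y;k)=-\int_y^\infty e^{2ik(y'-y)}V(y')m^V_+(y';k)\dd y'$ is itself oscillatory, so that $k\,(m^V_+)'$ stays bounded once the smooth part $q_{\rm av}$ is integrated by parts and the fast part $q_\eps$ is treated through the combined phase $2k+2\pi\lambda_l/\eps$, which by~\eqref{hyp-lambda} is bounded away from zero except in the high-energy (near-Bragg-resonant) window $|k|\sim\eps^{-1}$. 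Closing the estimate uniformly across that window, using the sharp $k$-dependent bounds of Lemma~\ref{Lem:f-gen-k-real} and the frequency separation to control, term by term in $j$, the interaction of the two oscillations, is the crux; once it is done, $|I^{[V,W]}(k)|\le\eps^2\,C$ and hence~\eqref{large-k-bound} follow.
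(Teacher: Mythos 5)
Your skeleton matches the paper's: the two-potential identity \eqref{IVW-def}, the bound $|t|\le 1$ for real $k$ from \eqref{tr-energy} to produce the prefactor $|k|^{-1}$, the direct estimate of the $\sigma^\eps$ contribution from the weighted $L^1$ hypothesis, and two integrations by parts against $e^{2\pi i \lambda_j y/\eps}$ to extract $\eps^2$ from the $q_\eps$ contribution. (The paper routes the comparison through the intermediate potential $q_{\rm av}$, writing $t^{q_{\rm av}+q_\eps}-t^{q_{\rm av}+\sigma^\eps}$ as the sum of two differences each taken against $t^{q_{\rm av}}$, whereas you compare $V$ and $W$ in one step; that difference is immaterial.)

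The problem is your final paragraph: you declare the uniform-in-$k$ control of the twice-differentiated integrand to be ``the crux'' and leave it unexecuted, proposing to reach it by substituting the Schr\"odinger equation for $(m^V_+)''$, tracking the reintroduced oscillation of $q_\eps$ through a combined phase $2k+2\pi\lambda_l/\eps$, and treating a near-resonant window $|k|\sim\eps^{-1}$ separately. As written the proof is therefore incomplete at precisely the decisive step, and the route you sketch for closing it is both unverified and unnecessary. The observation you miss is that Lemma~\ref{Lem:f-gen-k-real} --- which you cite --- already closes the estimate when applied to the \emph{product} of Jost solutions: from $|f_\pm|\le C(1+|k|)^{-1}(1+|y|)$, $|\partial_y f_\pm|\le C(1+|y|)$ and $|\partial_y^2 f_\pm|\le C(1+|k|)(1+|y|)$, every term of $\partial_y^2\bigl(q_j\, f_-\, f_+\bigr)$ pairs a differentiated factor, which costs at most $(1+|k|)$, with an undifferentiated one, which supplies $(1+|k|)^{-1}$; hence $\bigl|\partial_y^2(q_j f_- f_+)\bigr|\le C\bigl(|q_j''|(1+|k|)^{-2}+|q_j'|(1+|k|)^{-1}+|q_j|\bigr)(1+|y|)^2$, uniformly in $k\in\RR$. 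Integrating in $y$ using the weights provided by Hypotheses {\bf (V')} and summing over $j$ via \eqref{hyp-lambda} gives $|I_q|\le\eps^2 C$ with no case analysis in $k$ whatsoever; this is exactly the paper's computation. The same observation disposes of your worry about $q_\eps$ being ``reintroduced'': that term only needs to be bounded, since the factor $(\eps/2\pi\lambda_j)^2$ has already been extracted by the two integrations by parts.
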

\begin{Remark}
We shall apply this proposition to $\sigma^\eps(x)=\seff(x)$, for which ${C_\sigma=\mathcal{O}(\big\bracevert\hspace{-6pt}\big\bracevert V \big\bracevert\hspace{-6pt}\big\bracevert)}$. 
 \end{Remark}
\begin{proof}
Recall the identity~\eqref{IVW-def}, relating the transmission coefficients of {\em any potentials} $V,W\in \mathcal{L}^1_{2}$:
\begin{equation}
\frac{k}{t^V(k)}  =  \frac{k}{t^W(k)}  -  \frac{ I^{[V,W]}(k)}{2i},\  \text{with } \ I^{[V,W]}(k) \equiv \int_{-\infty}^\infty f^W_-(y;k)(V-W)(y)f^V_+(y;k)\dd y.
\label{tdiff}
\end{equation}
Since $t^{ q_{\rm av}+q_\eps } - t^{q_{\rm av}+\sigma^\eps}=\left[t^{ q_{\rm av}+q_\eps }-t^{q_{\rm av}}\right]\ +\ \left[t^{q_{\rm av}}- t^{q_{\rm av}+\sigma^\eps}\right] $, we estimate the two bracketed terms independently. We begin by comparing the transmission coefficients for $W\equiv q_{\rm av}$ and $V \ \equiv \ q_{\rm av}+\sigma^\eps$. We have by~\eqref{tdiff}
\begin{equation}
\frac{k}{t^{q_{\rm av}+\sigma^\eps}(k)}  -  \frac{k}{t^{q_{\rm av}}(k)}  =  -\frac{1}{2i} I^{[q_{\rm av}+\sigma^\eps,q_{\rm av}]}(k) = -\frac{1}{2i} \int_{-\infty}^\infty f^{q_{\rm av}}_-(y;k)\sigma^\eps(y)f^{q_{\rm av}+\sigma^\eps}_+(y;k)\dd y.
\label{recip-diff}
\end{equation}
Using the estimates of Lemma~\ref{Lem:f-gen}, we obtain
\begin{align}
\left|\ \int_{-\infty}^\infty f^{q_{\rm av}}_-(y;k)\ \sigma^\eps(y)\ f^{q_{\rm av}+\sigma^\eps}_+(y;k)\dd y\ \right| 
&\le \eps^2 C_\sigma. \label{sandwich}
\end{align}
From~\eqref{recip-diff} and~\eqref{sandwich} we have 
\begin{equation}
\left|\ t^{q_{\rm av}+\sigma^\eps}(k) - t^{q_{\rm av}}(k)\ \right|\ \le\ \eps^2\ |k|^{-1}\ C_\sigma\ \left| t^{q_{\rm av}}(k)\ t^{q_{\rm av}+\sigma^\eps}(k) \right|\ .
\label{tdiffs}\end{equation}
Using the general relation $|t^V(k)|\le1$, for any $k\in\RR$, (see~\eqref{tr-energy}), we obtain
\[
\left|\ t^{q_{\rm av}+\sigma^\eps}(k)-t^{q_{\rm av}}(k)\ \right|\ \le\ \eps^2\ |k|^{-1}\ C_\sigma\ .\]

We now turn to the comparison of the transmission coefficients of $V \ \equiv \ q_{\rm av}+q_\eps$
 and $W\ \equiv q_{\rm av}$. Proceeding similarly, we have
\begin{align}
&\frac{k}{t^{q_{\rm av}+q_\eps}(k)} \ - \ \frac{k}{t^{q_{\rm av}}(k)} \ = \ -\frac{1}{2i} I^{[q_{\rm av},q_{\rm av}+q_\eps]}(k),\ \ {\rm where}\ \nn\\
 & I^{[q_{\rm av},q_{\rm av}+q_\eps]}(k)\ \equiv\ \int_{-\infty}^\infty f^{q_{\rm av}}_-(y;k)\ q_\eps(y)\ f^{q_{\rm av}+q_\eps}_+(y;k)\dd y\ .\label{Idef}
 \end{align}
Two integrations by parts yield
\begin{align*}
I^{[q_{\rm av},q_{\rm av}+q_\eps]}(k) \ &= \ \sum_{j\neq0}\int_{-\infty}^\infty q_j(y)e^{2i\pi {\lambda_j}y/\eps}f^{q_{\rm av}}_-(y;k) f^{q_{\rm av}+q_\eps}_+(y;k)\dd y \\
&= \sum_{j\neq0} \left(\frac{-\eps}{2i\pi {\lambda_j}}\right)^2 \int_{-\infty}^\infty \partial_y^2 \big(q_j(y)f^{q_{\rm av}}_-(y;k) f^{q_{\rm av}+q_\eps}_+(y;k)\big)e^{2i\pi {\lambda_j}y/\eps}\dd y. 
\end{align*}
Using the estimates of Lemma~\ref{Lem:f-gen-k-real} and Hypotheses {\bf (V')}, one sees that the integrand is bounded. Indeed, one has
\begin{align*} 
&\left|\ I^{[q_{\rm av},q_{\rm av}+q_\eps]}(k)\ \right|\ \le\  \sum_{j\neq0} \left(\frac{\eps}{2\pi {\lambda_j}}\right)^2  \ \int_{-\infty}^\infty\big| \partial_y^2 \big(q_j(y)f^{q_{\rm av}}_-(y;k) f^{q_{\rm av}+q_\eps}_+(y;k)\big)\big| \dd y  \nn\\
 &\quad \leq \ \eps^2  C\big(\big|q_{\rm av}\big|_{\mathcal{L}^{1}_2}\big) \sum_{j\neq0} \left[ \int_{-\infty}^\infty\big|\partial_y^2 q_j(y)\big|\frac{(1+|y|)^2}{(1+|k|)^{2}} \dd y\ + \ \int_{-\infty}^\infty\big|\partial_y q_j(y)\big|\frac{(1+|y|)^2}{1+|k|} \dd y  \right.\\
 & \qquad \left. + \   \int_{-\infty}^\infty\big| q_j(y)\big|(1+|y|)^2 \dd y\right]\   \leq \ \eps^2 C\big(\big|q_{\rm av}\big|_{\mathcal{L}^{1}_2}\big)\sum_{j\neq0} \big|q_j\big|_{\mathcal{W}^{1,1}_2}.
\end{align*}
Arguing as in~\eqref{tdiffs}, we deduce
\[\left|\ t^{q_{\rm av}+q_\eps}(k)-t^{q_{\rm av}}(k)\ \right| \ \le\ \eps^2\ |k|^{-1}\ C(\big\bracevert\hspace{-6pt}\big\bracevert V \big\bracevert\hspace{-6pt}\big\bracevert)\ \left| t^{q_{\rm av}}(k)\ t^{q_{\rm av}+q_\eps}(k) \right| \ \le\ \eps^2\ |k|^{-1} \ C(\big\bracevert\hspace{-6pt}\big\bracevert V \big\bracevert\hspace{-6pt}\big\bracevert).\]

This completes the proof of Proposition~\ref{prop:large-k1}.
\end{proof}

The following corollary follows from Theorem~\ref{thm:t-conv-real} and Proposition~\ref{prop:large-k1}.

\begin{Corollary}\label{cor:k-real}
Let $V_\eps=q_\eps=q(x,x/\eps)$ ($q_{\rm}=0$) satisfy Hypotheses {\bf (V')}. Then
 \begin{equation}
\sup_{k\in \RR} \left| t^{\seff}(k) - t^{q_\eps}(k) \right| \ = \ \O(\eps), \quad \eps\to 0.
\label{compare-tsq0iszero4}
\end{equation}
\end{Corollary}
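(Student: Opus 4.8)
The goal is to establish the uniform-in-$k\in\RR$ estimate~\eqref{compare-tsq0iszero4} by patching together two complementary regimes: small real $k$ (say $|k|\le1$) and large real $k$ (say $|k|\ge1$). The key point is that Theorem~\ref{thm:t-conv-real} and Proposition~\ref{prop:large-k1} together already cover all of $\RR$, so the proof is essentially a matter of combining the two estimates and checking that the bounds glue without losing a power of $\eps$.

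My plan is as follows. First, for the small-$k$ regime, $|k|\le1$, I would invoke Theorem~\ref{thm:t-conv-real}, specialized to $q_{\rm av}\equiv0$. Part~(1) of that theorem gives the reciprocal bound $\sup_{|k|\le1}\big|\tfrac{k}{t^{\seff}(k)}-\tfrac{k}{t^{q_\eps}(k)}\big|\le\eps^3\,C(\big\bracevert\hspace{-6pt}\big\bracevert V \big\bracevert\hspace{-6pt}\big\bracevert)$. To convert this reciprocal estimate into a bound on $|t^{\seff}(k)-t^{q_\eps}(k)|$ itself, I would use the algebraic identity $t^{\seff}-t^{q_\eps}=\big(\tfrac{k}{t^{\seff}}-\tfrac{k}{t^{q_\eps}}\big)\,\tfrac{t^{\seff}t^{q_\eps}}{k}$ together with the scaled-limit information~\eqref{teps-real}, which shows $\tfrac{k}{t^{q_\eps}(k)}=k-\tfrac{i}{2}\eps^2\int_\RR\Lambda_{\rm eff}+\O(\eps^3)$ and similarly for $\seff$. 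The denominator $|k-\tfrac{i}{2}\eps^2\int\Lambda_{\rm eff}|$ is bounded below by $\tfrac{1}{2}\eps^2\int\Lambda_{\rm eff}$ (since the two quantities lie on orthogonal axes, $k\in\RR$ and the shift purely imaginary), so $|t^{q_\eps}(k)|=\big|\tfrac{k}{k-\tfrac{i}{2}\eps^2\int\Lambda_{\rm eff}+\O(\eps^3)}\big|\le 1$ and the factor $\big|\tfrac{t^{\seff}t^{q_\eps}}{k}\big|$ is controlled by $|t^{q_\eps}(k)|/|k|$ near worst-case $k$. A careful accounting near $k=0$, where both $t$'s vanish linearly (genericity of $q_\eps$, Corollary~\ref{cor:qeps-is-generic}), shows the ratio stays bounded and the net bound on $|t^{\seff}-t^{q_\eps}|$ is $\O(\eps)$ uniformly for $|k|\le1$.

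Second, for the large-$k$ regime, $|k|\ge1$, I would apply Proposition~\ref{prop:large-k1} with $\sigma^\eps=\seff$. By the Remark following that proposition, $\seff$ satisfies $\int|\seff(y)|(1+|y|)\dd y\le\eps^2 C_\sigma$ with $C_\sigma=\O(\big\bracevert\hspace{-6pt}\big\bracevert V \big\bracevert\hspace{-6pt}\big\bracevert)$, so estimate~\eqref{large-k-bound} gives $|t^{q_\eps}(k)-t^{\seff}(k)|\le\eps^2|k|^{-1}C(\big\bracevert\hspace{-6pt}\big\bracevert V \big\bracevert\hspace{-6pt}\big\bracevert)$. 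For $|k|\ge1$ this is bounded by $\eps^2 C$, which is even stronger than $\O(\eps)$ and is uniform in $k$ on the unbounded set. Taking the supremum over $|k|\ge1$ therefore yields an $\O(\eps^2)=\O(\eps)$ contribution. Combining the two regimes by taking the larger of the two bounds gives~\eqref{compare-tsq0iszero4}.

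The main obstacle is the small-$k$ conversion step, not the large-$k$ part: one must confirm that dividing the clean reciprocal estimate by $k$ does not blow up as $k\to0$. The danger is that $|t^{\seff}(k)t^{q_\eps}(k)/k|$ could be singular, but since both numerator factors vanish linearly at $k=0$ while the reciprocal difference is $\O(\eps^3)$ and the denominators are uniformly bounded below away from zero by $\O(\eps^2)$ (thanks to the imaginary shift $-\tfrac{i}{2}\eps^2\int\Lambda_{\rm eff}$ keeping $k-\tfrac{i}{2}\eps^2\int\Lambda_{\rm eff}$ off the origin for real $k$), the worst case occurs precisely at the scale $|k|\sim\eps^2$, where the ratio is $\O(\eps^{-2})$ and multiplies the $\O(\eps^3)$ reciprocal bound to give the stated $\O(\eps)$. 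Tracking this balance uniformly, and verifying that the transition at $|k|=1$ is consistent between the two estimates, is the only delicate bookkeeping; everything else is a direct citation of the two prior results.
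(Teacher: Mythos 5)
Your proposal is correct and follows essentially the same route as the paper's proof: split $\RR$ into $|k|\le 1$ and $|k|\ge 1$, use the reciprocal estimate of Theorem~\ref{thm:t-conv-real} together with the lower bound $\big|k-\tfrac{i}{2}\eps^2\int\Lambda_{\rm eff}\big|\gtrsim |k|+\eps^2$ to get the small-$k$ bound $C\eps^3/(\eps^2+|k|)\le C\eps$, and invoke Proposition~\ref{prop:large-k1} with $\sigma^\eps=\seff$ for $|k|\ge1$. Your identification of the worst-case balance at $|k|\sim\eps^2$ is exactly the mechanism behind the paper's stated intermediate bound.
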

\begin{proof}
The behavior for $k$ small is controlled as in Corollary~\ref{cor:compare-t-qav-is-0}. Conditions~\eqref{eq:condition1} and~\eqref{eq:condition2} hold in particular when we restrict to real wave numbers, $k\in \RR$. Therefore, one sees from~\eqref{eqnfork} and~\eqref{eqnfork1} that the difference between $t^{q_\eps}(k)$ and $t^{\seff}(k)$ is small, uniformly for $|k|\leq1$, $k\in\RR$:
\[ \sup_{k\in \RR, \ |k|\leq 1} \left| t^{\seff}(k) - t^{q_\eps}(k) \right| \ \leq \ C\frac{\eps^3}{\eps^2+|k|} , \]
where $C=C(M_K)$, and $M_K=\max(1,\sup_{k\in \RR}|t^0(k)|)=1$.
The difference is controlled for $|k|\geq 1$ by Proposition~\ref{prop:large-k1}, and Corollary~\ref{cor:k-real} follows.
\end{proof}

\section{Detailed dispersive time decay of $\exp\left(-iH_{q_\eps}t\right)\psi_0$;\\
 the effect of a pole of $t^{q_\eps}(k)$ near $k=0$}
\label{sec:time-decay}

In this section we use our detailed results on $t^{q_\eps}(k)$ to prove time decay estimates
 of the Schr\"odinger equation:
 \begin{equation}\label{eq:schrod}
i \partial_t \psi \ = \ H_{V}\psi \ \equiv \ -\ \partial_x^2 \psi \ + \ V(x)\psi,\ \ 
\psi(0,x) \ = \ \psi_0\ .
\end{equation}
for initial conditions $\psi_0$, which are orthogonal to the bound states of $H_{q_\eps}$.

Let $V\in \mathcal{L}^1_{1}$. Then, it is known that $H_V$ has no singular-continuous spectrum, no positive ({\it embedded}) eigenvalues and its absolutely-continuous spectrum is $[0, \infty)$; see, for example,~\cite{DeiftTrubowitz79}. In general, $H_V$ may have a finite number of negative eigenvalues that are simple: $E_N<\dots <E_0<0$. We denote by $u_j$ the eigenvector associated to the eigenvalue $E_j$, normalized to have $L^2$ norm equal to one. By the spectral theorem, the solution of~\eqref{eq:schrod} can be decomposed as follows:
\[\psi(x,t) \ = \ e^{-itH_V} \psi_0 \ = \ \sum_{j=0}^N e^{-itE_j} (\psi_0,u_j) u_j + e^{-itH_V} P_c\psi_0,\]
where $P_c$ denotes the projection onto the continuous spectral subspace of $H$.
 $\exp(-itH_V) P_c\psi_0$ is a {\it scattering state} which spatially spreads and temporally decays: 
$ \big\vert e^{-itH_V} P_c\psi_0 \big\vert_{L^\infty_x}\to0$ as $t\to\infty$. 
In the case $V(x)\equiv 0$, we have $\psi(x,t)=\exp(it\partial_x^2 )\psi_0=K_t\star\psi_0,$ where $\left| K_t(x) \right| \le (4\pi t)^{-1/2}$.
From this decay estimate it follows immediately that
$\big\vert e^{-itH_0} P_c\psi_0 \big\vert_{L^\infty_x} \ \le\ C\ |t|^{-1/2}\ \big\vert\psi_0\big\vert_{L^1}.$
This $|t|^{-1/2}$ decay-rate is associated with the potential $V\equiv0$ being non-generic. 
For generic potentials the decay estimate is more rapid:
$ \big\vert e^{-itH_V} P_c\psi_0 \big\vert_{L^\infty_x} \ = \ \O(t^{-3/2});$ 
see~\cite{Goldberg07},~\cite{Schlag07}. In~\cite{Weder00,ArtbazarYajima00} the time-decay of spatially weighted 
 $L^2$ norms is studied. 
\smallskip 

\noindent {\it Question:\ What is the precise behavior of the 
 $e^{-itH_{q_\eps}} P_c\psi_0$, when $q_\eps$ is a highly oscillatory potential: $q_\eps(x) \equiv q(x,x/\eps)$?\
In particular, what is the influence of the low-energy bound state induced by the effective potential well (equivalently, the complex pole of $t^{q_\eps}(k)$ near $k=0$) on the dispersive decay properties?}
\medskip

\noindent Using the preceding analysis we can prove:
\begin{theorem}[Dispersive decay estimate for $\exp(-iH_{q_\eps}t)$] \label{cor:schrod} ~\\
 Let $V_\eps=q_\eps(x)=q(x,x/\eps)$ satisfy Hypotheses~{\bf (V')} with ${q_{\rm av} \equiv 0}$, and $\psi_0\in \mathcal{L}^1_{3}$. There exists constants $C=C(\big\bracevert\hspace{-6pt}\big\bracevert V \big\bracevert\hspace{-6pt}\big\bracevert)>0$ and $\eps_0>0$ such that for $0<\eps<\eps_0$, 
\begin{align}
\big\vert (1+|x|)^{-3} \left(e^{-itH_{q_\eps}} P_c\psi_0\right)(t,x)\big\vert \ &\le\ C\ \frac{1}{t^{1/2}}\ 
\frac{1}{1+\eps^4\ \left(\int_\mathbb{R}\Lambda_{\rm eff}\right)^2\ t}\ \big|\psi_0\big|_{\mathcal{L}^1_3}\ .
\label{decay-eps} \end{align}
\end{theorem}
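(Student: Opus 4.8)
The plan is to represent the continuous-spectral propagator by its distorted-Fourier (Stone's formula) kernel, reduce the theorem to a weighted pointwise bound on that kernel, and then analyze the kernel by splitting the spectral integral into high- and low-energy pieces. Writing $x_>=\max(x,y)$, $x_<=\min(x,y)$, the identity \eqref{eq:Wronskian-vs-transmission} shows that the Green's function of $H_{q_\eps}-k^2$ is a constant multiple of $\tfrac{t^{q_\eps}(k)}{k}\,f^{q_\eps}_+(x_>;k)\,f^{q_\eps}_-(x_<;k)$, and Stone's formula (the factor $k$ from $d\lambda=2k\,dk$ cancelling the $1/k$) yields
\[
\big[e^{-itH_{q_\eps}}P_c\big](x,y)\ =\ \frac{1}{2\pi}\int_{\RR} e^{-itk^2}\,t^{q_\eps}(k)\,f^{q_\eps}_+(x_>;k)\,f^{q_\eps}_-(x_<;k)\ \dd k\ .
\]
Since this is by construction the continuous-spectral part, the single bound state of Corollary~\ref{cor:edge} is automatically absent. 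It then suffices to prove the uniform-in-$\eps$ kernel bound $\big|[e^{-itH_{q_\eps}}P_c](x,y)\big|\le C\,t^{-1/2}\big(1+\eps^4(\int_\RR\Lambda_{\rm eff})^2 t\big)^{-1}(1+|x|)^3(1+|y|)^3$; dividing by $(1+|x|)^3$ and integrating against $\psi_0\in\mathcal{L}^1_3$ gives \eqref{decay-eps}. The algebraic weights are present precisely to absorb the polynomial-in-position growth produced by $k$-differentiation of the Jost solutions; the bounds on $f^{q_\eps}_\pm$ and up to three of their $k$-derivatives are supplied by Lemma~\ref{Lem:f-gen-k-real}, and the $\mathcal{W}^{3,1}_3$ decay in Hypotheses {\bf (V')} is exactly what makes these bounds uniform in $\eps$.

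Fix a smooth cutoff $\chi(k)$ equal to $1$ for $|k|\le1$ and $0$ for $|k|\ge2$, and split the $k$-integral. On the high-energy piece $|k|\gtrsim1$ one has $t^{q_\eps}(k)\to1$ and $f^{q_\eps}_\pm(\cdot;k)\to e^{\pm ik\cdot}$ with symbol-type bounds, so the amplitude is smooth and the product of Jost solutions contributes the oscillation $e^{ik(x_>-x_<)}$. Repeated integration by parts in $k$, using $e^{-itk^2}=(-2itk)^{-1}\partial_k e^{-itk^2}$, together with the van der Corput estimate for the residual stationary point of $-tk^2+k(x_>-x_<)$ (whose second derivative is $-2t$), yields a contribution bounded by $C\,t^{-1/2}$ with the stated weights, uniformly in $\eps$ and with no enhancement factor. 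This reproduces the generic free-type $t^{-1/2}$ rate and is standard.

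The low-energy piece is the crux. Here I insert the sharp expansion of Theorem~\ref{thm:t-conv-real}/Corollary~\ref{cor:k-real}: with $\mu_\eps\equiv\frac{\eps^2}{2}\int_\RR\Lambda_{\rm eff}$ one has $\frac{k}{t^{q_\eps}(k)}=(k-i\mu_\eps)+\O(\eps^3)$ uniformly for real $|k|\le2$, and since $|k-i\mu_\eps|\ge\mu_\eps\gtrsim\eps^2$ on the real axis this gives $t^{q_\eps}(k)=\frac{k}{k-i\mu_\eps}(1+\O(\eps))$. The estimate thus reduces to the model oscillatory integral
\[
J(t)\ \equiv\ \int_{\RR} e^{-itk^2}\,\frac{k}{k-i\mu_\eps}\,b(k)\ \dd k\ ,
\]
where $b(k)=\chi(k)f^{q_\eps}_+(x_>;k)f^{q_\eps}_-(x_<;k)$ and its first few $k$-derivatives are bounded by the weights, uniformly in $\eps$. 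The decisive structural fact is that $q_\eps$ is generic for small $\eps$ (Corollary~\ref{cor:qeps-is-generic}), so $t^{q_\eps}(0)=0$ and the amplitude $a(k)=\frac{k}{k-i\mu_\eps}b(k)$ vanishes at the stationary point $k=0$. Consequently the naive $\O(t^{-1/2})$ contributions cancel (the surviving odd-order term integrates to zero by parity), and the leading surviving term is governed by $a''(0)\sim\mu_\eps^{-2}b(0)$. Estimating $J$ by van der Corput on the window $|k|\lesssim t^{-1/2}$ produces two regimes: for $t\lesssim\mu_\eps^{-2}$ one has $\frac{k}{k-i\mu_\eps}\approx1$ there and $|J|\lesssim t^{-1/2}$, whereas for $t\gtrsim\mu_\eps^{-2}$ one has $\frac{k}{k-i\mu_\eps}\approx ik/\mu_\eps$, the linear term cancels, and the next order yields $|J|\lesssim\mu_\eps^{-2}t^{-3/2}$. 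The two regimes combine to $|J(t)|\le C\,t^{-1/2}(1+\mu_\eps^2 t)^{-1}$, which is exactly the asserted factor. As a consistency check, deforming the contour past the pole $k=i\mu_\eps$ would pick up a residue proportional to $e^{-itE_\eps}$ with $E_\eps=-\mu_\eps^2$, i.e.\ the edge bound state of Corollary~\ref{cor:edge}; it does not appear here precisely because we represent $P_c$.

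The main obstacle is making this low-energy analysis sharp and uniform in $\eps$. The amplitude $\frac{k}{k-i\mu_\eps}b(k)$ has a transition layer of width $\mu_\eps\sim\eps^2$ about $k=0$, so one must track the two scales $|k|\sim\mu_\eps$ and $|k|\sim t^{-1/2}$ simultaneously and prove the model bound $|J(t)|\le C\,t^{-1/2}(1+\mu_\eps^2 t)^{-1}$ with $C$ independent of $\mu_\eps$ (hence of $\eps$); this is best isolated as a stand-alone lemma proved by van der Corput with the explicit pole structure. One must also verify that the relative $\O(\eps)$ correction to $t^{q_\eps}$ and the spatial (Jost-solution) dependence do not spoil the uniformity, which follows from Lemma~\ref{Lem:f-gen-k-real} and the fact that the correction is controlled on the decisive window $|k|\gtrsim\mu_\eps$. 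Combining the high- and low-energy bounds gives the weighted kernel estimate, and integrating against $\psi_0\in\mathcal{L}^1_3$ completes the proof of \eqref{decay-eps}.
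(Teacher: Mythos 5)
Your overall strategy mirrors the paper's: a distorted-Fourier/Stone's-formula representation of $e^{-itH_{q_\eps}}P_c$, a high/low energy splitting, and a low-energy enhancement driven by the genericity of $q_\eps$ together with the pole of $t^{q_\eps}$ near $k= i\mu_\eps$, $\mu_\eps\equiv\frac{\eps^2}{2}\int_\RR\Lambda_{\rm eff}$. Your model bound $|J(t)|\le C\,t^{-1/2}(1+\mu_\eps^2 t)^{-1}$ is exactly the mechanism the paper exploits: there it enters through the bounds $|G(x;k)|\lesssim (k^2+\eps^4(\int\Lambda_{\rm eff})^2)^{-1}$ and $|\partial_k G(x;k)|\lesssim k^{-1}(k^2+\eps^4(\int\Lambda_{\rm eff})^2)^{-1}$ after one and then two integrations by parts, and the observation that genericity of $q_\eps$ kills the boundary term at $k=0$ is the same as in the paper.

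There is, however, a genuine gap in your high-energy piece. You assert that the region $|k|\gtrsim 1$ contributes only $C\,t^{-1/2}$ ``with no enhancement factor.'' That is not enough to conclude the theorem: for $t\gg\eps^{-4}$ the right-hand side of \eqref{decay-eps} is of size $t^{-3/2}\eps^{-4}\big(\int\Lambda_{\rm eff}\big)^{-2}\ll t^{-1/2}$, so a high-energy contribution that genuinely decays only like $t^{-1/2}$ would overwhelm the claimed bound. What is actually needed, and what the paper uses (Proposition 3 of Goldberg and Theorem 3.1 of Schlag; see \eqref{high-est}), is the weighted estimate $O(t^{-3/2})$ for the high-energy part, uniformly in $\eps$; since $t^{-3/2}\le C\,t^{-1/2}\big(1+\eps^4(\int\Lambda_{\rm eff})^2 t\big)^{-1}$ for $t\ge1$ and $\eps$ small, this is compatible with \eqref{decay-eps}. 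Your integration-by-parts/van der Corput sketch can in fact deliver $t^{-3/2}$ there, but as written the claimed exponent is wrong and the argument does not close. Two smaller points: the $k$-derivative bounds you need for the Jost solutions are not in Lemma~\ref{Lem:f-gen-k-real} (which controls $x$-derivatives); they are the Artbazar--Yajima estimates \eqref{Jost-derivatives}. And to run the low-energy integration by parts uniformly in $\eps$ you must control $\partial_k t^{q_\eps}$ and $\partial_k^2 t^{q_\eps}$, not only the leading profile $k/(k-i\mu_\eps)$; this is precisely Lemma~\ref{t-bound} and should be invoked explicitly rather than deferred to a remark about the window $|k|\gtrsim\mu_\eps$.
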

\begin{Remark} We expect that an analogous result holds with $V_\eps=q_{\rm av}(x)+q(x,x/\eps)$, where $q_{\rm av}$ is any non-generic potential.
\end{Remark}
\begin{Remark}
As a consequence of our proof, a decay estimate like~\eqref{decay-eps} holds in the case of small potentials: $V\equiv \lambda Q$, with $\int Q\neq0$ and $\lambda$ sufficiently small:
\[
\big\vert (1+|x|)^{-3} \left(e^{-itH_{\lambda Q}} P_c\psi_0\right)(t,x)\big\vert \ \le\ C\ \frac{1}{t^{1/2}}\ 
 \frac{1}{1+\lambda^2\ \left(\int_\mathbb{R}Q\right)^2\ t}\ \big|\psi_0\big|_{\mathcal{L}^1_3}.
 \]
\end{Remark}

\noindent {\em Proof of Theorem~\ref{cor:schrod}.} We follow the method of~\cite{Goldberg07,Schlag07}. In particular, the starting point of our analysis is the spectral theorem for $H$:
$
P_c \phi = \mathcal{F}^\star \mathcal{F} \phi,
$
with $\mathcal{F}$ and $\mathcal{F}^\star$ the distorted Fourier transform and its adjoint, bounded operators on $L^2$:
\[\begin{array}{cl}
 \mathcal{F} & :\ \phi \mapsto \mathcal{F}[\phi](k) \equiv \displaystyle \int_\mathbb{R} \phi (x) \overline{\Psi (x,k)}\ \dd x,\\
 \mathcal{F}^\star & :\ \Phi \mapsto \displaystyle \int_{-\infty}^{+\infty} \Phi(k) \Psi (x,k)\ \dd k
 \end{array}\]
and
\[
\Psi (x;k) \ \equiv \ \frac{1}{\sqrt{2 \pi}} \left\{ \begin{array}{cc}
t(k) f^{q_\eps}_+(x;k) & k \geq 0, \\
t(-k) f^{q_\eps}_-(x;-k) & k < 0.
\end{array} \right. 
\]
The role of the transmission coefficient, $t^{q_\eps}(k)$ on the time-evolution on the continuous spectral part of $H_{q_\eps}$ is made explicit via the representation of ${\psi_c(x,t)=P_c\psi(x,t)}$:
\begin{align*}\psi_c(t,x) \ &\equiv \ e^{-itH_{q_\eps}} P_c\psi_0 \ = \ \mathcal{F}^\star e^{-itk^2} \mathcal{F} \psi_0\\
& =\frac1{2\pi} \int_0^\infty e^{-ik^2t} |t^{q_\eps}(k)|^2 F(x;k) \dd k,\end{align*}
with
\[F(x;k) \ = \ \int_{-\infty}^\infty \big[ f^{q_\eps}_+(x;k) \overline{f^{q_\eps}_+ (y,k)}+f^{q_\eps}_-(x;k) \overline{f^{q_\eps}_- (y,k)}\big] \psi_0(y) \dd y.\]
We next decompose $\psi_{c}(x,t)$ into its high and low frequency components, respectively, {\em i.e.} components respectively near and far away from the edge of the continuous spectrum. Introduce the smooth cutoff function
 $\chi$ defined by
\[\chi(k) \ \equiv \ 0 \quad \text{ for } \quad |k|\geq 2k_0 \ ,\qquad \qquad \chi(k) \ \equiv \ 1\quad \text{ for } \quad |k|\leq k_0.\]
Here, we set $k_0=1+\big\bracevert\hspace{-6pt}\big\bracevert V \big\bracevert\hspace{-6pt}\big\bracevert$, motivated by the high frequency analysis of~\cite{Schlag07}. 
Using $\chi(k)$, we decompose into high and low energy components $\psi_{\rm high}$ and $\psi_{\rm low}$:
\begin{align} 
\psi_c(t,x)  &= \ \psi_{\rm low}(t,x)+\psi_{\rm high}(t,x) \nn \\
& = \  \int_0^\infty \chi\ e^{-ik^2t} |t^{q_\eps}(k)|^2\ F(x;k) \frac{\dd k}{2\pi}  +   \int_0^\infty (1-\chi )e^{-ik^2t}\ |t^{q_\eps}(k)|^2\ F(x;k) \frac{\dd k}{2\pi}  .\label{lh-frequencies} 
\end{align}

\noindent $\psi_{\rm high}$, can be estimated without regard to whether or not $V$ is generic. We refer to Proposition 3 of~\cite{Goldberg07} and Theorem 3.1 of~\cite{Schlag07}, for the following estimate:
\begin{equation}\label{high-est}
\big\vert (1+|x|)^{-1}\psi_{\rm high} \big\vert_{L^\infty_x} \ = \ \big\vert (1+|x|)^{-1} e^{-itH_{q_\eps}} (1-\chi(H) ) P_c\psi_0\big\vert_{L^\infty_x} \ \leq \ C\ |t|^{-3/2}\big\vert\psi_0\big\vert_{\mathcal{L}^1_{1}},
\end{equation} 
where $C$ depends on $\big| q_\eps \big|_{L^1_1}$ and is bounded, independent of $\eps$.

To estimate the low energy component, $\psi_{\rm low}$, we make use of estimates on the Jost solutions, $f^{q_\eps}_\pm(x;k)$ and use the precise behavior of $t^{q_\eps}(k)$ obtained in Corollary~\ref{cor:k-real}. 
 We first obtain $\O(t^{-1/2})$- decay, uniformly for $\eps$. In a second step, we obtain the precise behavior in the statement of Theorem~\ref{cor:schrod}, for $\eps$ small.

 Let us decompose $\psi_{\rm low}$ into contributions from frequencies in the ranges:
 \begin{equation} 
 0\le k\le \frac{k_0}{\sqrt{t}}\ \ {\rm and}\ \ \frac{k_0}{\sqrt{t}}\le k\ \le 2k_0. 
 \nn\end{equation}
 In terms of the cutoff function, $\chi$, we have:
 \begin{align}
 \psi_{\rm low} \ 
 &=\ \frac1{2\pi} \int_0^\infty \chi(k\sqrt t) \chi(k)\ e^{-ik^2t} |t^{q_\eps}(k)|^2\ F(x;k)\ \dd k \nn\\
 &\quad + \ \frac1{2\pi} \int_0^\infty (1-\chi(k\sqrt t)) \chi(k)\ e^{-ik^2t} |t^{q_\eps}(k)|^2\ F(x;k)\ \dd k \label{decompose1-low}\nn\\
 &=\ \psi_{\rm low}^{(i)}(x,t)\ +\ \psi_{\rm low}^{(ii)}(x,t)
\end{align}

 Straightforward estimate of $\psi_{\rm low}^{(i)}$ gives:
\begin{equation}\label{low1}
\left| \psi_{\rm low}^{(i)}(x,t) \right| \le \frac{1}{2\pi } \int_0^{2k_0/\sqrt t} |t^{q_\eps}(k)|^2\ F(x;k) \dd k \ \leq \ \frac{k_0}{\pi }\ \frac{1}{ t^{1/2}}\ \sup_{k\in\RR} \left| F(x,k)\right| .\end{equation}
To estimate $\psi_{\rm low}^{(ii)}$, we integrate by parts:
 \[ \psi_{\rm low}^{(ii)}(x,t)\ = \ \frac{-1}{4\pi i t} \int_0^\infty e^{-ik^2t} \partial_k\left((1-\chi(k\sqrt t))\chi(k)k^{-1} |t^{q_\eps}(k)|^2 F(x;k) \right)\dd k. \]
 Note that there is no boundary contribution from $k=\infty$, since $\chi(k)$ is compactly supported, and no boundary contribution from $k=0$, since $|t^{q_\eps}(0)|=0$; $q_\eps$ is generic if $\eps$ is small enough, by Corollary~\ref{cor:qeps-is-generic}.
 
 Since $\chi(x,k)\equiv0$ for $k\geq 2 k_0$ and $1-\chi(k\sqrt t)\equiv0$ for $k\leq k_0/\sqrt t$, it follows that
 \begin{align*}
\left|\psi_{\rm low}^{(ii)}(x,t)\right| &\leq  \frac{C}{ t}\ \int_{k_0/\sqrt t}^{2 k_0} \left|\ |t^{q_\eps}(k)|^2 F(x;k) {\partial_k}\left[\chi(k)\frac{1-\chi(k\sqrt t)}{2ik} \right]\ \right| \ + \ \left| \frac{\partial_k \left[ |t^{q_\eps}(k)|^2 F(x;k)\right]}{k}\right| \dd k\\
 & \leq  \frac{C}{t} \sup_{k\in\RR} \left| F(x,k)\right| \int_{k_0/\sqrt t}^{2 k_0}
 \sqrt{t} \frac{|\chi'(k\sqrt{t})|}{k}+\frac{1}{k^2}\dd k  +  \frac{C}{t} \int_{k_0/\sqrt t}^{2 k_0}\left| \frac{\partial_k \left[ |t^{q_\eps}(k)|^2 F(x;k)\right]}{k} \right| \dd k .
 \end{align*} 
 Note that 
 \[ \sqrt{t}\ \int_{k_0/\sqrt t}^{2 k_0}\ \frac{|\chi'(k\sqrt{t})|}{k} \dd k = \sqrt{t}\ \int_{k_0}^{2k_0\sqrt{t}} \frac{|\chi'(z)|}{z}\ dz\ = \mathcal{O}(\sqrt{t}), \]
 since $\chi'(z)$ vanishes near $0$ and is of compact support. Therefore,
 \begin{align}
 \left|\psi_{\rm low}^{(ii)}(x,t)\right|\ &\leq \ \frac{C(1+k_0^{-1})}{t^{1/2}} \sup_{k\in\RR} \left| F(x,k)\right|
 \ +\ \frac{C}{t} \int_{k_0/\sqrt t}^{2 k_0}\left|\ \frac{\partial_k \left[ |t^{q_\eps}(k)|^2 F(x;k)\right]}{k}\ \right| \dd k.
 \label{psi-low2-est1}
 \end{align}
 
 The estimates~\eqref{low1} and~\eqref{psi-low2-est1} are bounded thanks to uniform (in $\eps$) control of $t^{q_\eps}(k)$, $F(x;k)$ and their $k-$derivatives, which are given in~\eqref{eq:est-F} and Lemma~\ref{t-bound}, below. It follows then from~\eqref{decompose1-low} that 
\begin{equation}\label{uniform-estimate}  \big\vert (1+|x|)^{-3} \psi_{\rm low}(x,t) \big\vert \ \le\ C(\big\bracevert\hspace{-6pt}\big\bracevert V \big\bracevert\hspace{-6pt}\big\bracevert)\ \frac{1}{t^{1/2}}\ \big|\psi_0\big|_{\mathcal{L}^1_3}.\end{equation}

We now refine~\eqref{uniform-estimate} by carefully considering the $\eps$- dependence for $\eps$ small at $t\gg1$. In order to achieve a $\O(t^{-3/2})$ estimate, we first integrate by parts:
 \[
 \psi_{\rm low} \ = \ \frac{-1}{4\pi i t} \int_0^\infty e^{-ik^2t} \partial_k\left(\chi(k)k^{-1} |t^{q_\eps}(k)|^2 F(x;k) \right)\dd k\ \equiv \ \frac{-1}{4\pi i t} \int_0^\infty e^{-ik^2t} G(x;k) \dd k.
\]
 Note again, as above, that there are no boundary contributions from $k=\infty$ or, for $\eps$ small, from $k=0$, by genericity of $q_\eps$.
 We now decompose $\psi_{\rm low}$ further into contributions from frequencies in the ranges:
 $
 0\le k\le \frac{k_0}{\sqrt{t}}\ \ {\rm and}\ \ \frac{k_0}{\sqrt{t}}\le k\ \le 2k_0. 
 $
 In terms of the cutoff function, $\chi$, we have:
 \begin{align}
 \psi_{\rm low} \ 
 &=\ \frac{-1}{4\pi i t} \int_0^\infty \chi(k\sqrt t) e^{-ik^2t} G(x;k) \dd k \ + \ \frac{-1}{4\pi i t} \int_0^\infty (1-\chi(k\sqrt t)) e^{-ik^2t} G(x;k) \dd k \label{decompose-low}\\
 &=\ \psi_{\rm low}^{(1)}(x,t)\ +\ \psi_{\rm low}^{(2)}(x,t)\nn
\end{align}
Estimation of $\psi_{\rm low}^{(1)}$ gives:
\begin{equation}\label{low}
\left| \psi_{\rm low}^{(1)}(x,t) \right| \le \frac{1}{4\pi t} \int_0^{2k_0/\sqrt t} \left| G(x;k)\right| \dd k \ \leq \ \frac{k_0}{\pi }\ \frac{1}{ t^{3/2}}\ \sup_{k\in\RR} \left| G(x;k)\right| .\end{equation}

To estimate $\psi_{\rm low}^{(2)}$, we subject it to one further integration by parts:
 \[ \psi_{\rm low}^{(2)}(x,t)\ = \ \frac{1}{4\pi t^2}\ \int_0^\infty e^{-ik^2t} \frac{\partial}{\partial k}\left[\frac{1-\chi(k\sqrt t)}{2ik} G(x;k)\right] \dd k.\]
 Since $G(x;k)\equiv0$ for $k\geq 2 k_0$, it follows that
 \begin{align*}
\left|\psi_{\rm low}^{(2)}(x,t)\right|\ &\leq \ \frac{C}{ t^2}\ \int_{k_0/\sqrt t}^{2 k_0} \left|\ G(x;k) \frac{\partial}{\partial k}\left[\frac{1-\chi(k\sqrt t)}{2ik} \right]\ \right| \ + \ \left| \frac{\partial_k G(x;k)}{k}\right| \dd k \nn \\
 & \leq \ \frac{C}{t^{2}} \sup_{k\in\RR} \left|\ G(x;k)\right| \int_{k_0/\sqrt t}^{2 k_0}\
 \sqrt{t}\ \frac{|\chi'(k\sqrt{t})|}{k}+\frac{1}{k^2}\dd k \ + \ \frac{C}{t^2} \int_{k_0/\sqrt t}^{2 k_0}\left|\ \frac{\partial_k G(x;k)}{k}\ \right| \dd k
 \end{align*}
 
 Note again that 
 \[ \sqrt{t}\ \int_{k_0/\sqrt t}^{2 k_0}\ \frac{|\chi'(k\sqrt{t})|}{k} \dd k = \sqrt{t}\ \int_{k_0}^{2k_0\sqrt{t}} \frac{|\chi'(z)|}{z}\ dz\ = \mathcal{O}(\sqrt{t}), \]
 since $\chi'(z)$ vanishes near $0$ and is of compact support. Therefore,
 \begin{align}
 \left|\psi_{\rm low}^{(2)}(x,t)\right|\ &\leq \ \frac{C(1+k_0^{-1})}{t^{3/2}} \sup_{k\in\RR} \left|\ G(x;k)\right|
 \ +\ \frac{C}{t^2} \int_{k_0/\sqrt t}^{2 k_0}\left|\ \frac{\partial_k G(x;k)}{k}\ \right| \dd k
 \label{psi-low2-est}
 \end{align}

 We now use the following two bounds, proved below, to complete our estimation of 
 $\psi_{\rm low}^{(1)}(x,t)$ and $\psi_{\rm low}^{(2)}(x,t)$:
 \begin{gather}
 \left| G(x;k)\right|  \leq  C(\big\bracevert\hspace{-6pt}\big\bracevert V \big\bracevert\hspace{-6pt}\big\bracevert)\frac{1+|x|^2}{k^2+\eps^4(\int\Lambda_{\rm eff})^2}  \leq  C(\big\bracevert\hspace{-6pt}\big\bracevert V \big\bracevert\hspace{-6pt}\big\bracevert)\frac{1+|x|^2}{\eps^4(\int\Lambda_{\rm eff})^2}\big\vert\psi_0\big\vert_{\mathcal{L}^1_{2}}, \label{est-F}\\
 \left|\partial_k G(x;k)\right| \ \leq \ C(\big\bracevert\hspace{-6pt}\big\bracevert V \big\bracevert\hspace{-6pt}\big\bracevert)\frac{1+|x|^3}{k(k^2+\eps^4(\int\Lambda_{\rm eff})^2)}\big\vert\psi_0\big\vert_{\mathcal{L}^1_{3}}.\label{est-dF}
 \end{gather}
 
Using these bounds in~\eqref{low} and~\eqref{psi-low2-est}, we obtain:
\begin{align}\label{low-ld-first}
&(1+|x|^2)^{-1}\ \left| \psi^{(1)}_{\rm low}(x,t) \right|\ \le\ C(\big\bracevert\hspace{-6pt}\big\bracevert V \big\bracevert\hspace{-6pt}\big\bracevert)\ t^{-{3/2}}\ \frac{1}{\eps^4\left(\int_\mathbb{R}\Lambda_{\rm eff}\right)^2}\ \left|\psi_0\right|_{\mathcal{L}^1_2} \ ;
\end{align}
and
\begin{align}
(1+|x|^3)^{-1}\ \left| \psi^{(2)}_{\rm low}(x,t) \right|\ &\le\ C(\big\bracevert\hspace{-6pt}\big\bracevert V \big\bracevert\hspace{-6pt}\big\bracevert)\ t^{-2}\ \int_{k_0/\sqrt{t}}^{2k_0}\frac{1}{k^2(k^2+\eps^4\left(\int\Lambda_{\rm eff}\right)^2 }\ \dd k \left|\psi_0\right|_{\mathcal{L}^1_3}\nn\\
&\le\ C(\big\bracevert\hspace{-6pt}\big\bracevert V \big\bracevert\hspace{-6pt}\big\bracevert)\ 
\frac{1}{k_0\ t^{1/2}}\ \int_1^{2\sqrt{t}}\ \frac{1}{l^2}\ \frac{dl}{k_0^2 l^2+
 \eps^4\left(\int\Lambda_{\rm eff}\right)^2\ t}\
 \left|\psi_0\right|_{\mathcal{L}^1_3}\nn\\
&\le\ C(\big\bracevert\hspace{-6pt}\big\bracevert V \big\bracevert\hspace{-6pt}\big\bracevert)\ 
\frac{1}{k_0\ t^{1/2}}\ \frac{1}{k_0^2+ \eps^4\left(\int\Lambda_{\rm eff}\right)^2\ t}\ \int_1^{2\sqrt{t}}\ \frac{1}{l^2}\ \dd l
\ \left|\psi_0\right|_{\mathcal{L}^1_3}\nn\\
&\le\ C(\big\bracevert\hspace{-6pt}\big\bracevert V \big\bracevert\hspace{-6pt}\big\bracevert)\ 
\frac{1}{k_0\ t^{1/2}}\ \frac{1}{k_0^2+ \eps^4\left(\int\Lambda_{\rm eff}\right)^2\ t}\ \left|\psi_0\right|_{\mathcal{L}^1_3} \ .
\label{low-ld-final}\end{align}

Finally, one has from~\eqref{decompose-low},~\eqref{low-ld-first} and~\eqref{low-ld-final} the estimate 
\begin{equation}\label{nonuniform-estimate} \big\vert (1+|x|)^{-3} \psi_{\rm low}(x,t) \big\vert \ \le\ C(\big\bracevert\hspace{-6pt}\big\bracevert V \big\bracevert\hspace{-6pt}\big\bracevert)\ \frac{t^{-3/2}}{\eps^4\left(\int\Lambda_{\rm eff}\right)^2}\ \big|\psi_0\big|_{\mathcal{L}^1_3}.\end{equation}
Theorem~\ref{cor:schrod} is a consequence of~\eqref{high-est},~\eqref{uniform-estimate} and~\eqref{nonuniform-estimate}.

We conclude the proof by establishing~\eqref{est-F}-\eqref{est-dF}. This requires sharp estimates on the transmission coefficient and the Jost solutions, as well as their derivatives. These estimates are given in Lemmata~3.6 and~3.9 of~\cite{ArtbazarYajima00} for any generic $V$ sufficiently decreasing at infinity. We shall adapt the estimates to $V_\eps\equiv V(x,x/\eps)$. 

The estimates concerning the Jost solutions are uniform with respect to $\eps$. In particular, one has from Lemma~3.6 of~\cite{ArtbazarYajima00}:
 \begin{align}
& \sup_{k\in\RR} \left| \partial_k^j \left(e^{-ikx} f_+^{V_\eps}(x;k)\right) \right| \ \leq \ C\big(\big| V_\eps \big|_{\L^1_3}\big)(1+\max(0,-x))^{j+1},\nn \\
 & \sup_{k\in\RR} \left| \partial_k^j \left(e^{ikx} f_-^{V_\eps}(x;k)\right) \right| \ \leq \ C\big(\big| V_\eps \big|_{\L^1_3}\big)(1+\max(0,x))^{j+1}, \ \ j=0,1,2. \label{Jost-derivatives}
 \end{align}
 Therefore, 
 \begin{equation}\label{eq:est-F}
|\partial_k^j F(x;k)| \ \leq \ C(\big| V_\eps \big|_{\L^1_3})(1+|x|^{j+1})\big\vert\psi_0\big\vert_{\mathcal{L}^1_{j+1}}, \qquad j=0,1,2.\end{equation}
Estimates~\eqref{est-F}-\eqref{est-dF} are now a direct consequence of the following Lemma, together with~\eqref{eq:est-F}.
\begin{Lemma} \label{t-bound} Let $V_\eps=V(x,x/\eps)$ satisfy Hypotheses {\bf (V')}, with ${q_{\rm av} \equiv 0}$. Then for $\eps$ small enough, one has
\[ \left| \partial_k^j t^{V_\eps}(k) \right| \ \leq \ C(\big\bracevert\hspace{-6pt}\big\bracevert V \big\bracevert\hspace{-6pt}\big\bracevert)\left|\frac{k^{1-j}}{k+\eps^2\int\Lambda_{\rm eff}}\right| \ ,\]
with $j=0,1,2$.
\end{Lemma}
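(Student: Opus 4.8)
The plan is to reduce everything to an expansion of the reciprocal transmission coefficient $k/t^{V_\eps}(k)$, controlled together with its first two $k$–derivatives, and then to recover the bounds on $t^{V_\eps}$ and its derivatives by the quotient rule. Write $m\equiv\int_\RR\Lambda_{\rm eff}>0$ and $D(k)\equiv k/t^{V_\eps}(k)$, so that $t^{V_\eps}=k/D$. By \eqref{teps-real} (that is, part~(2) of Theorem~\ref{thm:t-conv-real} together with Corollary~\ref{cor:tseff}) one has, uniformly for $0<k\le1$ — the low-energy range in which the pole sits —
\[ D(k)\ =\ k\ -\ \tfrac{i\eps^2}{2}m\ +\ E(k),\qquad E(k)=\O(\eps^3). \]
What remains is (i) the lower bound $|D(k)|\ge c\,(k+\eps^2 m)$ and (ii) the derivative bounds $E'(k)=\O(\eps^2)$, $E''(k)=\O(\eps^2)$; granting these, the three estimates follow by elementary algebra.

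First I would establish the derivative control on $E$. Starting from the exact comparison identity \eqref{IVW-def} with $V=q_\eps$ and $W=\seff$,
\[ D(k)\ =\ \frac{k}{t^{\seff}(k)}\ -\ \frac{1}{2i}\,I^{[q_\eps,\seff]}(k),\qquad I^{[q_\eps,\seff]}(k)=\int_\RR f^{\seff}_-(y;k)\,(q_\eps+\eps^2\Lambda_{\rm eff})(y)\,f^{q_\eps}_+(y;k)\dd y, \]
I treat the two terms separately. Since $\seff=\O(\eps^2)$ with the decay of Hypotheses~{\bf (V')}, the Jost bounds \eqref{Jost-derivatives} give $\partial_k^j\big(k/t^{\seff}(k)-k\big)=\O(\eps^2)$ for $j=0,1,2$, which together with Corollary~\ref{cor:tseff} accounts for the leading $-\tfrac{i\eps^2}{2}m$ and an $\O(\eps^2)$ contribution to the derivatives. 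For $I^{[q_\eps,\seff]}$ I integrate by parts twice in $y$ in each term $\int q_j(y)e^{2\pi i\lambda_j y/\eps}f^{\seff}_-f^{q_\eps}_+\dd y$, exactly as in the proof of Proposition~\ref{prop:large-k1}; each pair of integrations produces a factor $(\eps/2\pi\lambda_j)^2$, and after differentiating under the integral sign the $k$–derivatives fall on the Jost solutions, bounded uniformly in $\eps$ by \eqref{Jost-derivatives}, the spatial weights $(1+|y|)^{j}$ so generated being absorbed by the $\mathcal{W}^{3,1}_3$–decay of the $q_j$ encoded in $\big\bracevert\hspace{-6pt}\big\bracevert V\big\bracevert\hspace{-6pt}\big\bracevert$. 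This yields $\partial_k^j I^{[q_\eps,\seff]}(k)=\O(\eps^2)$, $j=0,1,2$. The important point is that only the \emph{value} $E(k)=\O(\eps^3)$ requires the cancellation built into the choice of $\seff$, supplied by Theorem~\ref{thm:t-conv-real}; for the derivatives the cruder bound $E'=\O(\eps^2)$, $E''=\O(\eps^2)$ is all the subsequent algebra needs.

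For the lower bound on $D$, observe that for $0<k\le1$ and $\eps$ small, $|E(k)|\le C\eps^3\le\tfrac12\sqrt{k^2+\tfrac14\eps^4m^2}$ (using $\sqrt{k^2+\tfrac14\eps^4m^2}\ge\tfrac{\eps^2m}{2}$), so that
\[ |D(k)|\ \ge\ \Big|\,k-\tfrac{i\eps^2}{2}m\,\Big|-|E(k)|\ \ge\ \tfrac12\sqrt{k^2+\tfrac14\eps^4m^2}\ \ge\ c\,(k+\eps^2 m), \]
the last step by $\sqrt{a^2+b^2}\ge\tfrac1{\sqrt2}(a+b)$. This non-degeneracy is precisely the imaginary part furnished by the effective well $\seff$, which keeps the pole off the real axis. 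Writing $t^{V_\eps}=k/D$, the quotient rule gives $(t^{V_\eps})'=(D-kD')/D^2$ and $(t^{V_\eps})''=-kD''/D^2-2D'(D-kD')/D^3$, with $D'=1+\O(\eps^2)$, $D''=\O(\eps^2)$ and $D-kD'=-\tfrac{i\eps^2}{2}m+E-kE'=\O(\eps^2)$ on $0<k\le1$. The $j=0$ bound is immediate from the lower bound. For $j=1$, $|(t^{V_\eps})'|\le C\eps^2/(k+\eps^2m)^2\le C/(k+\eps^2m)$, using $\eps^2\le(k+\eps^2m)/m$. For $j=2$, the two terms are bounded by $Ck\eps^2/(k+\eps^2m)^2$ and $C\eps^2/(k+\eps^2m)^3$, each of which is $\le C\,k^{-1}(k+\eps^2m)^{-1}$ after using $k\le1$, $k/(k+\eps^2m)\le1$ and $\eps^2/(k+\eps^2m)\le1/m$. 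The complementary range $1\le k\le2k_0$ reduces to uniform boundedness of $t^{V_\eps}$ and its first two $k$–derivatives, which follows from the standard Jost estimates of Lemma~\ref{Lem:f-gen-k-real} since $k$ is then bounded away from $0$; the case $k<0$ follows from the reality relation \eqref{reality-condition}.

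The main obstacle is the uniform-in-$\eps$ control of $E'$ and $E''$ down to $k=0$: one must verify that differentiating the comparison identity in $k$ does not destroy the $\eps$–gain obtained from integrating by parts in $y$. Concretely, the two $y$–integrations by parts and the (up to two) $k$–differentiations have to be carried out simultaneously on $f^{\seff}_-(y;k)\,f^{q_\eps}_+(y;k)$, with all resulting spatial weights controlled by three $y$–derivatives against a cubic weight — which is exactly the strength of $\mathcal{W}^{3,1}_3$ in Hypotheses~{\bf (V')} and of the Jost estimates \eqref{Jost-derivatives}. Everything downstream of that is the elementary algebra above.
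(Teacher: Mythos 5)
Your argument reaches the right conclusion and your $j=0$ step (lower bound $|k/t^{V_\eps}(k)|\ge c(k+\eps^2\int\Lambda_{\rm eff})$ from Theorem~\ref{thm:t-conv-real} and~\eqref{eqnfork-app}) is exactly the paper's, but for $j=1,2$ you take a genuinely different and more laborious route. The paper does not compare with $\seff$ at all for the derivatives: it differentiates the \emph{free}-comparison identity~\eqref{tsig-Isig}, $t=2ik/(2ik-I^{V_\eps}(k))$, obtaining
\[
\partial_k t^{V_\eps}(k)\ =\ \frac{t^{V_\eps}(k)}{k}\ -\ \frac{(t^{V_\eps}(k))^2\,(2i-\partial_k I^{V_\eps}(k))}{2ik},
\]
in which every term already carries a factor $t/k$ or $t^2/k$; combined with the $j=0$ bound $|t|\le C|k|/(k+\eps^2\!\int\!\Lambda_{\rm eff})$ this gives the claim using only a \emph{uniform} (in $\eps$ and $k$) bound on $\partial_k I^{V_\eps}$, which follows directly from~\eqref{Jost-derivatives} and $V_\eps\in\mathcal{L}^1_3$ — no integration by parts in $y$, no $\eps$-gain needed. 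Your route instead requires $\partial_k^jI^{[q_\eps,\seff]}=\O(\eps^2)$ for $j=1,2$, i.e.\ you must propagate the oscillatory gain through $k$-differentiation; as you yourself flag, this forces you to control mixed derivatives $\partial_k^j\partial_y^l f_\pm$ of the Jost solutions, which are not stated in~\eqref{Jost-derivatives} or Lemma~\ref{Lem:f-gen-k-real} and would have to be derived from the Volterra equation. That is doable but is real additional work that the paper's algebraic rearrangement avoids entirely. Two smaller points: your quotient-rule algebra ($D-kD'=-\tfrac{i\eps^2}{2}m+E-kE'=\O(\eps^2)$, then $\eps^2\le(k+\eps^2m)/m$) is correct; but your treatment of the complementary range by "uniform boundedness of $t$ and its first two derivatives" only covers a compact interval such as $[1,2k_0]$ — for the lemma as stated (all $k\in\RR$) the $j=1,2$ bounds require decay in $k$, which the paper's formula delivers for free via the prefactors $t/k$, $t^2/k$, whereas uniform boundedness does not.
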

\begin{proof}[Proof of the Lemma.]
The estimate for $j=0$ is a consequence of Corollary~\ref{cor:k-real} with the estimate~\eqref{eqnfork-app}. Estimates on the derivatives are obtained by deriving identity~\eqref{tsig-Isig} with respect to $k$. We recall
\[ t^{V_\eps}(k) \ =\ \frac{2ik}{2ik-I^{V_\eps}(k)}, \qquad \text{where} \quad I^{V_\eps}(k) \ \equiv \ \int_{-\infty}^\infty V_\eps(y)e^{-iky}f^{V_\eps}_+(y;k)\dd y ,\]
so that
\[ \partial_k t^{V_\eps}(k) \ =\ \frac{2i}{2ik-I^{V_\eps}(k)}  -  \frac{2ik(2i-\partial_k I^{V_\eps}(k))}{(2ik-I^{V_\eps}(k))^2} \ = \ \frac{t^{V_\eps}(k)}{k}  -  \frac{(t^{V_\eps}(k))^2(2i-\partial_k I^{V_\eps}(k))}{2ik}.\]
 Using~\eqref{Jost-derivatives}, one controls uniformly $\partial_k I^{V_\eps}(k)$, so that 
 \[ \left| \partial_k t^{V_\eps}(k) \right| \ \leq \ \ \frac{|t^{V_\eps}(k)|}{k}(1+C|t^{V_\eps}(k)|) \ \leq \ C(\big\bracevert\hspace{-6pt}\big\bracevert V \big\bracevert\hspace{-6pt}\big\bracevert)\left|\frac{1}{k+\eps^2\int\Lambda_{\rm eff}}\right|.\]
 The second derivative in $k$ follows in the same way. \end{proof}

\section{The effective potential, $\seff(x)$; proof of Theorem~\ref{thm:t-conv} }\label{sec:hard}

As discussed in the introduction, for small $|k|$, $t^{q_{\rm av}+q_\eps}(k)$ is \underline{not} uniformly approximated by the transmission coefficient of the homogenized (averaged) potential $q^{\rm av}(x)=\int_0^1V(x,y) \dd y$, for $\eps$ small.
 In this section we prove for $k$ bounded that a uniform approximation can be achieved comparing $t^{q_{\rm av}+q_\eps}(k)$ to the transmission coefficient of an appropriate {\it effective potential well}:
 \begin{align}
 V^{\rm eff}_\eps(x)\ &=\ q_{\rm av}(x)\ +\ \seff(x),\ \ {\rm where}\ 
 \nn\\
 \seff(x)\ &\equiv\ -\eps^2\ \Lambda_{\rm eff}(x)\ \equiv\ -\frac{\eps^2}{(2\pi)^2} \sum_{j\neq 0} \frac{|q_j(x)|^2}{{\lambda_j}^2}.
 \label{seff-def1}
 \end{align}
 
The point of departure for the analysis is the identity~\eqref{IVW-def}, with the choices 
$V=q_{\rm av}+q_\eps$ and $W=q_{\rm av}+\sigma$:
\begin{align}
&\frac{k}{t^{q_{\rm av}+q_\eps}(k)} \ - \ \frac{k}{t^{q_{\rm av}+\sigma}(k)}\ =\ \ 
\frac{i}{2}\ I^{[q_{\rm av}+q_\eps,q_{\rm av}+\sigma]}(k),\quad \text{with }\label{trans-diff}\\
& \ I^{[q_{\rm av}+q_\eps,q_{\rm av}+\sigma]}(k) \equiv \int_{-\infty}^\infty f^{q_{\rm av}+\sigma}_-(y;k)\left(\ q_\eps(y)-\sigma(y)\right)\ f^{q_{\rm av}+q_\eps}_+(y;k)\dd y.
\label{Iqs}
\end{align}
 Here, $\sigma(x)$ is unspecified and to be chosen so that $I^{[q_{\rm av}+q_\eps,q_{\rm av}+\sigma]}$ is sufficiently high order in $\eps$. 
 The main step in the proof is:
 
\begin{Proposition}\label{prop:k-small}
Let $V_\eps \ \equiv \ q_{\rm av}(x)+q(x,x/\eps)$ satisfy Hypotheses {\bf (V)}, and $k\in K$ satisfy Hypotheses {\bf (K)}.
Define the effective potential $\seff \in L^\infty_\beta$, by the expression in ~\eqref{seff-def1}. Then, there exists $\eps_0>0$ such that the following bound holds uniformly for $(\eps,k)\in[0,\eps_0)\times K$:
\begin{equation}
I^{[q_{\rm av}+\seff,q_{\rm av}+q_\eps]}(k)\ \leq \ \eps^3 \ C\left(\big\bracevert V \big\bracevert,\sup_{k\in K}|k|\right) \ \max\left(1 , \sup_{k\in K}|t^{q_{\rm av}}(k)|\right)
\label{Isw-bound}
\end{equation}

\end{Proposition}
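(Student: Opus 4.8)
The plan is to start from the identity~\eqref{trans-diff}--\eqref{Iqs} specialized to $\sigma=\seff$ and, recalling the definition~\eqref{IVW-def} together with $(q_{\rm av}+\seff)-(q_{\rm av}+q_\eps)=\seff-q_\eps$, to estimate
\[
I^{[q_{\rm av}+\seff,q_{\rm av}+q_\eps]}(k)=\int_{-\infty}^\infty f^{q_{\rm av}+q_\eps}_-(y;k)\,\big(\seff(y)-q_\eps(y)\big)\,f^{q_{\rm av}+\seff}_+(y;k)\dd y .
\]
I would split this into a \emph{smooth} contribution (from $\seff$) and an \emph{oscillatory} contribution (from $q_\eps$), and show that \emph{each} of these equals the same quantity $\int_{\RR}\seff\, f^{q_{\rm av}}_- f^{q_{\rm av}}_+\dd y=\O(\eps^2)$ up to an $\O(\eps^3)$ error, so that their difference defining $I$ is $\O(\eps^3)$. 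The precise form $\seff=-\eps^2\Lambda_{\rm eff}$ in~\eqref{seff-def1},~\eqref{sigma-eff} is dictated exactly by this matching. Throughout, uniform (in $\eps$ and $k\in K$) bounds on $f^{q_{\rm av}+q_\eps}_\pm$, $f^{q_{\rm av}+\seff}_\pm$ and their $y$-derivatives are supplied by Lemma~\ref{Lem:f-gen}, and the factor $M_K$ enters through the Green's kernel below, whose denominator $\mathcal{W}[f^{q_{\rm av}}_+,f^{q_{\rm av}}_-]=-2ik/t^{q_{\rm av}}(k)$ is pole-free on $K$ by Hypotheses~{\bf (K)}.

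For the smooth piece, $\seff=\O(\eps^2)$ together with $f^{q_{\rm av}+q_\eps}_-=f^{q_{\rm av}}_-+\O(\eps^2)$ and $f^{q_{\rm av}+\seff}_+=f^{q_{\rm av}}_++\O(\eps^2)$ yields at once $\int f^{q_{\rm av}+q_\eps}_-\,\seff\, f^{q_{\rm av}+\seff}_+\dd y=\int_{\RR}\seff\,f^{q_{\rm av}}_-f^{q_{\rm av}}_+\dd y+\O(\eps^4)$. The oscillatory piece is the heart of the matter. I would insert the Volterra representation~\eqref{eq:Volterra1} (Proposition~\ref{prop:q2s}) of $f^{q_{\rm av}+q_\eps}_-$ as a perturbation of $f^{q_{\rm av}}_-$, with kernel $K(y,y')=\big(f^{q_{\rm av}}_+(y)f^{q_{\rm av}}_-(y')-f^{q_{\rm av}}_-(y)f^{q_{\rm av}}_+(y')\big)/\mathcal{W}[f^{q_{\rm av}}_+,f^{q_{\rm av}}_-]$. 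The decisive structural fact is that $K(y,y)=0$ while $\partial_{y'}K(y,y')\big|_{y'=y}=1$: integrating each fast phase $e^{2\pi i\lambda_l y'/\eps}$ by parts twice, the first (diagonal) boundary term vanishes, so the leading correction is $\O(\eps^2)$ rather than $\O(\eps)$,
\[
f^{q_{\rm av}+q_\eps}_-(y;k)=f^{q_{\rm av}}_-(y;k)-\frac{\eps^2}{(2\pi)^2}\sum_{l\ne0}\frac{q_l(y)}{\lambda_l^2}\,f^{q_{\rm av}}_-(y;k)\,e^{2\pi i\lambda_l y/\eps}+\O(\eps^3),
\]
uniformly on $K$, the remainder being controlled by iterating~\eqref{eq:Volterra1} in the weighted norms of Lemma~\ref{Lem:f-gen}.

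Substituting this expansion into $\int f^{q_{\rm av}+q_\eps}_-\,q_\eps\,f^{q_{\rm av}+\seff}_+\dd y$ produces, besides the term $\int f^{q_{\rm av}}_-\,q_\eps\,f^{q_{\rm av}+\seff}_+\dd y$ — which is $\O(\eps^3)$ since $q_\eps$ is mean-zero oscillatory and three integrations by parts, afforded by the $W^{3,\infty}_\beta$ regularity in Hypotheses~{\bf (V)}, each gain a power of $\eps$ — the double sum
\[
-\frac{\eps^2}{(2\pi)^2}\sum_{l\ne0}\sum_{j\ne0}\frac{1}{\lambda_l^2}\int_{\RR}q_l(y)q_j(y)\,f^{q_{\rm av}}_-(y;k)f^{q_{\rm av}+\seff}_+(y;k)\,e^{2\pi i(\lambda_l+\lambda_j)y/\eps}\dd y .
\]
The resonance mechanism now enters: the fast phase disappears precisely when $\lambda_j=-\lambda_l$, i.e. $j=-l$, whence $q_lq_j=|q_l|^2$ by the reality relation $q_{-l}=\overline{q_l}$; these non-oscillatory terms sum, via~\eqref{seff-def1},~\eqref{sigma-eff}, to $\int_{\RR}\seff\,f^{q_{\rm av}}_-f^{q_{\rm av}}_+\dd y+\O(\eps^4)$. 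For every remaining pair $\lambda_l+\lambda_j\ne0$, and since $-\lambda_l$ is itself a frequency, the gap hypothesis~\eqref{hyp-lambda} gives $|\lambda_l+\lambda_j|\ge\theta$, so one more integration by parts renders each such term $\O(\eps^3)$, the double sum converging thanks to $\sum_j|q_j|_{W^{3,\infty}_\beta}<\infty$ and $\sum_j\lambda_j^{-2}<\infty$. Hence the oscillatory piece equals $\int_{\RR}\seff\,f^{q_{\rm av}}_-f^{q_{\rm av}}_+\dd y+\O(\eps^3)$, matching the smooth piece; the two cancel in $I$, leaving $\O(\eps^3)$.

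The main obstacle is the \emph{uniform} bookkeeping of the remainders, i.e. showing that the formal $\O(\eps^3)$ contributions are genuinely bounded by $\eps^3\,M_K\,C(\big\bracevert V \big\bracevert,\sup_{k\in K}|k|)$. This demands: (i) quantitative control of the remainder in the expansion of $f^{q_{\rm av}+q_\eps}_-$, obtained by estimating the Neumann series for~\eqref{eq:Volterra1} in the exponentially weighted spaces of Lemma~\ref{Lem:f-gen}; (ii) checking that the repeated integrations by parts never exceed the three available derivatives of the $q_l$ and at most three derivatives of the Jost factors, the latter furnished by $q_{\rm av}\in W^{1,\infty}_\beta$, $\seff\in W^{3,\infty}_\beta$ through the Schr\"odinger ODE; and (iii) controlling the sums over $\{\lambda_j\}$ uniformly, for which~\eqref{hyp-lambda} is essential, both to bound the non-resonant denominators below by $\theta$ and to secure summability. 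The factor $M_K$ tracks the appearance of $t^{q_{\rm av}}(k)$ through $K$ and through Lemma~\ref{Lem:f-gen}, and reduces to $M_K=1$ when $K\subset\RR$.
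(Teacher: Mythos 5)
Your overall strategy is the paper's own: start from the identity~\eqref{IVW-def}/\eqref{Iqs}, expand the exact Jost solution of the oscillatory potential via the two-potential Volterra equation~\eqref{eq:Volterra1}, use $\m(x,x)=0$, $\partial_y\m|_{y=x}=1$ to gain $\eps^2$ from the boundary terms, identify the resonant pairs $\lambda_j=-\lambda_l$ as the source of the non-oscillatory $\O(\eps^2)$ contribution $\sum_j |q_j|^2/(2\pi\lambda_j)^2$, cancel it by the choice~\eqref{seff-def1}, and kill everything else by further integrations by parts using~\eqref{hyp-lambda} and the $W^{3,\infty}_\beta$ regularity. (That you expand $f^{q_{\rm av}+q_\eps}_-$ against $f^{q_{\rm av}+\seff}_+$ rather than, as in the paper, $f^{q_{\rm av}+q_\eps}_+$ against $f^{q_{\rm av}+\seff}_-$ is immaterial.)

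There is, however, one concretely wrong step: the displayed expansion
$f^{q_{\rm av}+q_\eps}_-=f^{q_{\rm av}}_--\frac{\eps^2}{(2\pi)^2}\sum_l\lambda_l^{-2}q_l\,f^{q_{\rm av}}_-\,e^{2\pi i\lambda_l y/\eps}+\O(\eps^3)$
does not hold. After two integrations by parts the remainder is
$\sum_l\big(\tfrac{\eps}{2\pi i\lambda_l}\big)^2\int_{-\infty}^y\partial_{y'}^2\big(\m\,q_l\,f^{q_{\rm av}+q_\eps}_-\big)e^{2\pi i\lambda_l y'/\eps}\dd y'$, which is only $\O(\eps^2)$, and you cannot integrate by parts a third time at this stage: $\partial_{y'}^3 f^{q_{\rm av}+q_\eps}_-\sim q_\eps'\,f^{q_{\rm av}+q_\eps}_-=\O(\eps^{-1})$, so a further integration by parts gains nothing. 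One must instead substitute $\partial_{y'}^2f^{q_{\rm av}+q_\eps}_-=(q_{\rm av}+q_\eps-k^2)f^{q_{\rm av}+q_\eps}_-$ and separate the self-resonance of $q_\eps$ with the phase $e^{2\pi i\lambda_l y'/\eps}$: this leaves a genuinely non-oscillatory $\O(\eps^2)$ term, $\frac{\eps^2}{(2\pi)^2}\sum_l\lambda_l^{-2}\int_{-\infty}^{y}\m(y,y')\,q_l q_{-l}(y')\,f^{q_{\rm av}+q_\eps}_-(y')\dd y'$, which your expansion silently discards. This is exactly the term $\t q_l$ that the paper's Cancellation Lemma~\ref{Lem:cancellation} retains at order $\eps^2$ and disposes of only \emph{after} multiplication by the oscillatory factor $q_\eps$, via one more integration by parts in Lemma~\ref{Lem:oscillations}. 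The same rescue works in your setup (the omitted term is smooth in $y$ and meets $q_\eps(y)$ in the outer integral, so it contributes $\O(\eps^3)$ to $I$), so the Proposition is not endangered; but as written your decomposition of the oscillatory piece into ``three-fold integrated term $+$ double sum $+$ $\O(\eps^3)$'' misses this contribution, and the claimed $\O(\eps^3)$ control of the Jost remainder is false. You should restate the expansion with an explicit $\eps^2$ non-oscillatory remainder and add the extra integration-by-parts step that eliminates it from $I$.
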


Theorem~\ref{thm:t-conv} is then a consequence of the bound~\eqref{Isw-bound}, applied to the right hand side of~\eqref{trans-diff}.
 We now turn to derivation of the {\em effective potential well} $\seff$, and the proof of Proposition~\ref{prop:k-small}.

\subsection{The heart of the matter; derivation of the effective potential well, $\seff(x)$, 
 and the proof of Proposition~\ref{prop:k-small} }\label{sec:potential-well}

To prove Proposition~\ref{prop:k-small} we need to bound $I^{[q_{\rm av}+\seff,q_{\rm av}+q_\eps]}$,
given by the integral expression in~\eqref{Iqs}. 
 We seek a decomposition of the integrand into oscillatory and non-oscillatory terms.
 Oscillatory terms can be integrated by parts to obtain bounds of high order in $\eps$. Non-oscillatory terms are removed by appropriate choice of~$\sigma(x)$.

We begin with $f^{q_{\rm av}+q_\eps }_+$. Using the Volterra equation~\eqref{eq:Volterra1} with $V=q_{\rm av}+q_\eps$ and $W=q_{\rm av}$, one has 
\begin{align}
f^{q_{\rm av}+q_\eps }_+(x;k) \ 
 &= \ f^{q_{\rm av}}_+(x;k) \ + \ J[q_{\rm av},q_\eps](x;k)\ ,\label{eq:Volterra1b}
\end{align}
where 
\begin{equation}
\label{eq:defI} J[q_{\rm av},q_\eps](\zeta;k) \ \equiv \ \int_\zeta^\infty q_\eps(y) \frac{f^{q_{\rm av}}_+(\zeta;k)f^{q_{\rm av}}_-(y;k)-f^{q_{\rm av}}_-(\zeta;k)f^{q_{\rm av}}_+(y;k)}{\mathcal{W}[f^{q_{\rm av}}_+ ,f^{q_{\rm av}}_-]} f^{q_{\rm av}+q_\eps}_+(y;k) \dd y,
\end{equation}
Therefore,
\[
\left(\ q_\eps(\zeta) -\sigma(\zeta)\ \right)\ f^{q_{\rm av}+q_\eps}_+(\zeta;k) \ = \ q_\eps(\zeta) f^{q_{\rm av}}_+(\zeta;k) \ - \ \sigma(\zeta) f^{q_{\rm av}+q_\eps}_+(\zeta;k) + q_\eps(\zeta) J[q_{\rm av},q_\eps](\zeta;k) ,
\]
implying that $I^{[q_{\rm av}+\sigma,q_{\rm av}+q_\eps]}$, given by~\eqref{Iqs}, can be written as
\begin{equation}\label{eq:Iqsigma2}
I^{[q_{\rm av}+\sigma,q_{\rm av}+q_\eps]} \ = \ \int_{-\infty}^\infty f^{q_{\rm av}+\sigma}_-(\zeta;k)\Big(q_\eps(\zeta) f^{q_{\rm av}}_+(\zeta;k) \ - \ \sigma(\zeta) f^{q_{\rm av}+q_\eps}_+(\zeta;k) + q_\eps(\zeta) J[q_{\rm av},q_\eps](\zeta;k)\Big)\dd \zeta.
\end{equation}
 We next show that there exists a natural choice, $\sigma=\seff(x)=\mathcal{O}(\eps^2)$ such that the contribution of 
\[- \sigma(\zeta) f^{q_{\rm av}+q_\eps}_+(\zeta;k) + q_\eps(\zeta) J[q_{\rm av},q_\eps](\zeta;k)\]
 to the integral~\eqref{eq:Iqsigma2} is of order $\mathcal{O}(\eps^3)$, for $\eps$ sufficiently small.
\begin{Lemma}[Cancellation Lemma] \label{Lem:cancellation}
Let $V(x,y)$ satisfy Hypotheses {\bf (V)}, and $k\in K$ satisfy Hypotheses {\bf (K)}. Define
\begin{equation}
\seff(x) \ = \ -\frac{\eps^2}{(2\pi)^2} \sum_{j\neq 0} \frac{|q_j(x)|^2}{{\lambda_j}^2}\ =\ -\eps^2 \Lambda_{\rm eff}(x).
\label{sigma-def}
\end{equation}
Then, there exists $\eps_0>0$ and $C(V,K)=C(\big\bracevert V \big\bracevert,\sup_{k\in K}|k|)$ such that 
\begin{align*}
&-\seff(\zeta) f^{q_{\rm av}+q_\eps}_+(\zeta;k) + q_\eps(\zeta) J[q_{\rm av},q_\eps](\zeta;k) \\
& \qquad\qquad = \ \eps^2\ \sum_{j\neq0} \t q_j(\zeta)e^{2i\pi {\lambda_j}\zeta/\eps}\ +\ \eps^2\sum_{{\substack{j,l\neq0\\ j+l\ne0 } }} \t q_{j,l}(\zeta)e^{2i\pi {(\lambda_j+\lambda_l)}\zeta/\eps} \ + \ \eps^3 q_\eps(\zeta) R^\eps(\zeta;k),
\end{align*} 
where the following estimate holds for any ${(\eps,k)\in [0,\eps_0)\times K}$:
\begin{align*}
\sum_{{\substack{j,l\neq0\\ j+l\ne0 } }} \big( |\t q_{j,l}(\zeta) e^{\beta|\zeta|}|+|\t q_{j,l}'(\zeta) e^{\beta|\zeta|}|+|\t q_{j,l}''(\zeta) e^{\beta|\zeta|}|\big)  &\leq  C(V,K) ,  \\
|R^\eps(\zeta;k)| \ + \ \sum_{j\neq0}\big( |\t q_j(\zeta) e^{\beta|\zeta|}|+|\t q_j'(\zeta) e^{\beta|\zeta|}|+|\t q_j''(\zeta) e^{\beta|\zeta|}|\big)
&\leq  C(V,K) M_K (1+|\zeta|^2) e^{\alpha |\zeta|},& 
\end{align*} 
\end{Lemma}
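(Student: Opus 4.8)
The plan is to read off the decomposition, and with it the very definition \eqref{sigma-def} of $\seff$, from the Volterra representation \eqref{eq:Volterra1b}--\eqref{eq:defI}. Write $J[q_{\rm av},q_\eps](\zeta;k)=\int_\zeta^\infty q_\eps(y)\,G(\zeta,y;k)\,f^{q_{\rm av}+q_\eps}_+(y;k)\dd y$ with
\[
G(\zeta,y;k)\ \equiv\ \frac{f^{q_{\rm av}}_+(\zeta;k)f^{q_{\rm av}}_-(y;k)-f^{q_{\rm av}}_-(\zeta;k)f^{q_{\rm av}}_+(y;k)}{\mathcal{W}[f^{q_{\rm av}}_+,f^{q_{\rm av}}_-]},
\]
insert the Fourier series $q_\eps(y)=\sum_{l\neq0}q_l(y)e^{2\pi i\lambda_l y/\eps}$, and integrate by parts twice in $y$ against each fast phase. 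Everything is driven by two identities for the kernel: $G(\zeta,\zeta;k)=0$, which annihilates the $y=\zeta$ boundary term after the first integration by parts, and $\partial_yG(\zeta,y;k)\big|_{y=\zeta}=1$ (since a Wronskian of solutions is $x$-independent, $\partial_yG|_{y=\zeta}=\mathcal{W}[f^{q_{\rm av}}_+,f^{q_{\rm av}}_-]/\mathcal{W}[f^{q_{\rm av}}_+,f^{q_{\rm av}}_-]$), which, after the second integration by parts, produces a boundary term $q_l(\zeta)f^{q_{\rm av}+q_\eps}_+(\zeta;k)$ in which the factor $1/\mathcal{W}$ has cancelled. The boundary terms at $y=+\infty$ vanish since $q_l\in W^{3,\infty}_\beta$ while, by the Jost bounds of Lemma~\ref{Lem:f-gen}, $G\,f^{q_{\rm av}+q_\eps}_+$ grows no faster than $e^{\alpha|\zeta|}$, and $|\Im(k)|<\alpha<\beta/2$ makes the product integrable.

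This gives $J=-\tfrac{\eps^2}{(2\pi)^2}\sum_{l\neq0}\lambda_l^{-2}q_l(\zeta)f^{q_{\rm av}+q_\eps}_+(\zeta;k)e^{2\pi i\lambda_l\zeta/\eps}+J_{\rm rem}$, where $J_{\rm rem}=-\tfrac{\eps^2}{(2\pi)^2}\sum_l\lambda_l^{-2}\int_\zeta^\infty\partial_y^2\big(q_l\,G\,f^{q_{\rm av}+q_\eps}_+\big)e^{2\pi i\lambda_l y/\eps}\dd y$. A third integration by parts in this last integral --- the step that consumes the third derivative in Hypotheses {\bf (V)} --- shows $J_{\rm rem}=\mathcal{O}(\eps^3)$, so that $q_\eps J_{\rm rem}=\eps^3 q_\eps R^\eps$ with $R^\eps$ bounded; since $\partial_y^2 G$ still carries the factor $1/\mathcal{W}=-t^{q_{\rm av}}/(2ik)$, this is where $M_K$ enters the estimate for $R^\eps$. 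Multiplying the displayed leading part of $J$ by $q_\eps(\zeta)=\sum_{j\neq0}q_j(\zeta)e^{2\pi i\lambda_j\zeta/\eps}$ and sorting the double sum by the combined fast phase $\lambda_j+\lambda_l$, the gap condition \eqref{hyp-lambda} together with $\lambda_{-j}=-\lambda_j$ shows $\lambda_j+\lambda_l=0$ exactly when $l=-j$. Using $q_{-j}=\overline{q_j}$ and $\lambda_{-j}^2=\lambda_j^2$, this single \emph{resonant} (phase-free) slice equals $-\tfrac{\eps^2}{(2\pi)^2}\sum_{j\neq0}\lambda_j^{-2}|q_j(\zeta)|^2 f^{q_{\rm av}+q_\eps}_+(\zeta;k)=\seff(\zeta)f^{q_{\rm av}+q_\eps}_+(\zeta;k)$, and is therefore cancelled identically by $-\seff f^{q_{\rm av}+q_\eps}_+$ --- this is precisely what dictates the choice \eqref{sigma-def}.

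What remains is purely oscillatory and is recorded, by fast frequency, as the single-phase family $\eps^2\sum_j\tilde q_j e^{2\pi i\lambda_j\zeta/\eps}$ and the two-phase family $\eps^2\sum_{j+l\neq0}\tilde q_{j,l}e^{2\pi i(\lambda_j+\lambda_l)\zeta/\eps}$, whose frequencies satisfy $|\lambda_j+\lambda_l|=|\lambda_j-\lambda_{-l}|\ge\theta$, plus the remainder $\eps^3 q_\eps R^\eps$. I would then verify the stated weighted bounds term by term. Because $1/\mathcal{W}$ cancels in the leading boundary term, the two-phase coefficients $\tilde q_{j,l}\sim q_jq_l\lambda_l^{-2}f^{q_{\rm av}+q_\eps}_+$ contain no $t^{q_{\rm av}}$ and hence no $M_K$; the exponential decay of $q_jq_l$ beats the $e^{\alpha|\zeta|}$ growth of $f^{q_{\rm av}+q_\eps}_+$ (as $\alpha<\beta/2$), yielding the clean weight $e^{-\beta|\zeta|}$. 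The coefficients $\tilde q_j$ and $R^\eps$, which retain the kernel $G$ and its $1/\mathcal{W}$, instead carry both $M_K$ and the polynomial-times-$e^{\alpha|\zeta|}$ growth coming from the Jost products, matching the bound $C(V,K)M_K(1+|\zeta|^2)e^{\alpha|\zeta|}$. Summability over $j,l$ follows from $\big\bracevert V\big\bracevert<\infty$ after the gains $\lambda_l^{-2}$ and $(\lambda_j+\lambda_l)^{-1}\le\theta^{-1}$ from \eqref{hyp-lambda}, and the bounds on the first and second derivatives of $\tilde q_j,\tilde q_{j,l}$ stay within $W^{3,\infty}_\beta$.

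The main obstacle is the uniform control, simultaneously in $\eps\in[0,\eps_0)$ and in the complex band $k\in K$, of the Jost solutions $f^{q_{\rm av}}_\pm$, $f^{q_{\rm av}+q_\eps}_+$ and two of their $x$- and $k$-derivatives in the exponentially weighted norms --- in particular the delicate behaviour of $G$ as $k\to0$, where numerator and $\mathcal{W}[f^{q_{\rm av}}_+,f^{q_{\rm av}}_-]=-2ik/t^{q_{\rm av}}(k)$ vanish together in the non-generic case. This is exactly what the appendix estimate Lemma~\ref{Lem:f-gen} is built to supply, and it is the condition $\alpha<\beta/2$ that makes the boundary terms at $+\infty$, all the $j,l$-sums, and the eventual $\zeta$-integration of $\eps^3 q_\eps R^\eps$ against $f^{q_{\rm av}+\seff}_-$ simultaneously convergent.
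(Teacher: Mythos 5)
Your overall architecture is the paper's: the Volterra representation \eqref{eq:Volterra1b}--\eqref{eq:defI}, the kernel identities $\m(\zeta,\zeta;k)=0$, $\partial_y\m|_{y=\zeta}=1$, two integrations by parts against each fast phase, and the identification of the resonant slice $l=-j$ of the boundary term with $\seff f^{q_{\rm av}+q_\eps}_+$, which dictates \eqref{sigma-def}. That part is correct. But there is a genuine gap in your treatment of the remainder $J_{\rm rem}=-\tfrac{\eps^2}{(2\pi)^2}\sum_l\lambda_l^{-2}\int_\zeta^\infty\partial_y^2\bigl(q_l\,\m\,f^{q_{\rm av}+q_\eps}_+\bigr)e^{2\pi i\lambda_l y/\eps}\,\dd y$: the claim that ``a third integration by parts shows $J_{\rm rem}=\mathcal{O}(\eps^3)$'' is false. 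Expanding $\partial_y^2(q_l\,\m\,f)$ produces the term $q_l\,\m\,\partial_y^2 f$, and by the Schr\"odinger equation $\partial_y^2 f=(q_{\rm av}+q_\eps-k^2)f$; the component $q_{-l}(y)e^{-2\pi i\lambda_l y/\eps}$ of $q_\eps(y)$ exactly cancels the phase $e^{2\pi i\lambda_l y/\eps}$ under the integral, leaving the non-oscillatory integral $\int_\zeta^\infty \m\,q_l q_{-l}\,f\,\dd y$ from which no further $\eps$ can be extracted. So $J_{\rm rem}$ retains a genuine $\mathcal{O}(\eps^2)$ piece; after multiplication by $q_\eps(\zeta)$ this is precisely the single-phase family $\eps^2\sum_j\t q_j(\zeta)e^{2\pi i\lambda_j\zeta/\eps}$ of the statement, with $\t q_j(\zeta)=q_j(\zeta)\sum_l(2\pi i\lambda_l)^{-2}\int_\zeta^\infty\m\,q_lq_{-l}\,f^{q_{\rm av}+q_\eps}_+\,\dd y$. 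You list this family in your final decomposition, but your derivation never produces it --- indeed it asserts the part of the calculation that generates it is already absorbed into $\eps^3R^\eps$, which is internally inconsistent and, as an estimate, wrong.

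A second, related problem: even for the non-resonant part, a blanket third integration by parts of $\partial_y^2(q_l\,\m\,f)$ would require $\partial_y^3 f=\partial_y\bigl[(q_{\rm av}+q_\eps-k^2)f\bigr]$, and $\partial_y q_\eps$ carries a factor $\eps^{-1}$ from the fast variable, so the naive gain is destroyed. The paper avoids both pitfalls by splitting $\partial_y^2(\m q_j f)=\partial_y^2(\m q_j)f+2\partial_y(\m q_j)\partial_y f+\m q_j\partial_y^2 f$, integrating by parts once more \emph{only} the first two terms (this is where the third derivative of $q_j$ in Hypotheses {\bf (V)} is consumed), and substituting the equation into the third term \emph{before} any further integration by parts, which is exactly how the resonance $\lambda_l+\lambda_j=0$ is exposed and the $\t q_j$ family isolated. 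Your proof needs this extra layer of bookkeeping; as written, the remainder estimate fails at order $\eps^2$.
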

for $\beta>2\alpha$. Therefore, one has
\begin{align}
 I^{[q_{\rm av}+\seff,q_{\rm av}+q_\eps]}(k) \ = \ \int_{-\infty}^\infty f^{q_{\rm av}+\seff}_-(\zeta;k) \Big(q_\eps(\zeta) 
f_+^{q_{\rm av}}+ \eps^2\sum_{j\neq0} \t q_j(\zeta)e^{2i\pi {\lambda_j}\zeta/\eps} &\nonumber \\
 \qquad +\eps^2\sum_{{\substack{j,l\neq0\\ j+l\ne0 } }} \t q_{j,l}(\zeta)e^{2i\pi {(\lambda_j+\lambda_l)}\zeta/\eps} + \eps^3 q_\eps(\zeta) R^\eps(\zeta;k)\Big) \dd y. & \label{eq:Iqsigma3} 
\end{align}
Lemma~\ref{Lem:cancellation} is proved in the next section. We first apply it to conclude the proof of
Theorem~\ref{thm:t-conv}. In succession, each term in~\eqref{eq:Iqsigma3} is controlled, for $k\in K$, by the bounds of the following: 
\begin{Lemma}\label{Lem:oscillations}
Let $V(x,y)$ satisfy Hypotheses {\bf (V)}, and $k\in K$ satisfy Hypotheses {\bf (K)}, then one has
\begin{align*}
 \left|\ \int_{-\infty}^\infty f^{q_{\rm av}+\seff}_-(\zeta;k)\ q_\eps(\zeta)\ f^{q_{\rm av}}_+(\zeta;k) \dd \zeta \ \right| \ &\leq \ \eps^3\ C(\big\bracevert V \big\bracevert,\sup_{k\in K}|k|),\\
 \sum_{j\neq0} \left|\ \int_{-\infty}^\infty f^{q_{\rm av}+\seff}_-(\zeta;k)\ \t q_j(\zeta)\ e^{2i\pi \lambda_j \zeta/\eps} \dd \zeta\ \right| \ &\leq \ \eps^2\ M_K\ C(\big\bracevert V \big\bracevert,\sup_{k\in K}|k|) ,\\
\sum_{\substack{j,l\neq0\\ j+l\ne0 } } \left|\ \int_{-\infty}^\infty f^{q_{\rm av}+\seff}_-(\zeta;k)\ \t q_{j,l}(\zeta)\ e^{2i\pi {(\lambda_j+\lambda_l)}\zeta/\eps} \dd \zeta\ \right| \ &\leq \ \eps^2\ C(\big\bracevert V \big\bracevert,\sup_{k\in K}|k|), \\ 
 \left|\ \int_{-\infty}^\infty f^{q_{\rm av}+\seff}_-(\zeta;k)\ q_\eps(\zeta)\ R^\eps(\zeta;k) \dd \zeta \ \right| \ &\leq \ M_K\ C(\big\bracevert V \big\bracevert,\sup_{k\in K}|k|)\ ,
\end{align*}
where $C(\big\bracevert V \big\bracevert,\sup_{k\in K}|k|)$ and $M_K=\max(1,\sup_{k\in K}|t^{q_{\rm av}}(k)|)$ are independent of $\eps\in [0,\eps_0)$.
\end{Lemma}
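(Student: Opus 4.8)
The plan is to reduce all four estimates to two mechanisms: integration by parts against the fast phase $e^{2\pi i\lambda\zeta/\eps}$, which applies to the first three integrals since their integrands each carry such a factor, and a direct absolute bound for the fourth, where the gain of $\eps^3$ has already been extracted in the decomposition~\eqref{eq:Iqsigma3}. Two external inputs are used throughout. The first is Lemma~\ref{Lem:f-gen}, which supplies pointwise bounds on $f^{q_{\rm av}}_\pm$ and $f^{q_{\rm av}+\seff}_\pm$ and on their $\zeta$-derivatives up to third order, uniformly in $k\in K$ and in $\eps\in[0,\eps_0)$; here one uses that $\big|\seff\big|_{W^{3,\infty}_\beta}\le\eps^2 C(\big\bracevert V \big\bracevert)$, so that $q_{\rm av}+\seff$ has $\eps$-uniform norm, and one notes that these Jost bounds depend only on the weighted potential norms and \emph{not} on $t^{q_{\rm av}}$, hence are $M_K$-free. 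The second is the Cancellation Lemma~\ref{Lem:cancellation} together with Hypotheses~{\bf (V)}, which furnish the amplitude bounds on $\tilde q_j$, $\tilde q_{j,l}$ and $R^\eps$.

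A preliminary frequency-separation remark makes the oscillatory arguments uniform. Since $\lambda_{-l}=-\lambda_l$ and the frequencies are $\theta$-separated by~\eqref{hyp-lambda}, one has $|\lambda_j|\ge\theta$ for $j\ne0$ and $|\lambda_j+\lambda_l|=|\lambda_j-\lambda_{-l}|\ge\theta$ whenever $j+l\ne0$. Thus every phase in the first three integrals has frequency at least $\theta/\eps$, and each integration by parts contributes a factor $\eps/(2\pi|\lambda|)\le\eps/(2\pi\theta)$. For the first integral I expand $q_\eps=\sum_{j\ne0}q_j e^{2\pi i\lambda_j\zeta/\eps}$ and integrate by parts three times in each summand, with smooth amplitude $A_j(\zeta)=f^{q_{\rm av}+\seff}_-(\zeta;k)\,q_j(\zeta)\,f^{q_{\rm av}}_+(\zeta;k)$. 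The boundary terms vanish, and $A_j,\dots,A_j'''$ are absolutely integrable: differentiating the Jost factors through $f''=(W-k^2)f$ costs only one derivative of $W\in W^{1,\infty}_\beta$ and keeps the product of two Jost solutions growing at most like $e^{2\alpha|\zeta|}$ in the strip $|\Im k|<\alpha$, while $q_j\in W^{3,\infty}_\beta$ supplies the three derivatives and the exponential localization; since $\beta>2\alpha$ the product is integrable. Summing yields $(\eps/2\pi)^3\sum_j|\lambda_j|^{-3}\|A_j'''\|_{L^1}\lesssim\eps^3\theta^{-3}\sum_j\big|q_j\big|_{W^{3,\infty}_\beta}$, which is the stated $\eps^3$ bound, with no $M_K$.

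The third integral is identical but with frequency $\lambda_j+\lambda_l$ and only two integrations by parts; the amplitude is $f^{q_{\rm av}+\seff}_-\,\tilde q_{j,l}$, and Lemma~\ref{Lem:cancellation} controls $\tilde q_{j,l}$ and its first two derivatives by $Ce^{-\beta|\zeta|}$, summably in $(j,l)$, so that $|\lambda_j+\lambda_l|\ge\theta$ produces the $\eps^2$ gain, again $M_K$-free. The second integral is two integrations by parts against $e^{2\pi i\lambda_j\zeta/\eps}$ with amplitude $f^{q_{\rm av}+\seff}_-\,\tilde q_j$; now the Cancellation Lemma bounds $\tilde q_j$ and its first two derivatives by $CM_K(1+|\zeta|^2)e^{(\alpha-\beta)|\zeta|}$, so its product with the $e^{\alpha|\zeta|}$-growth of $f^{q_{\rm av}+\seff}_-$ is the integrable $M_K(1+|\zeta|^2)e^{(2\alpha-\beta)|\zeta|}$ (using $\beta>2\alpha$), giving the $\eps^2 M_K$ bound. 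Finally, the fourth integral needs no oscillatory gain: $q_\eps$ decays exponentially, $|R^\eps|\le CM_K(1+|\zeta|^2)e^{\alpha|\zeta|}$, and $|f^{q_{\rm av}+\seff}_-|\lesssim e^{\alpha|\zeta|}$, so the integrand is absolutely integrable and bounded by $M_K\,C(\big\bracevert V \big\bracevert,\sup_{k\in K}|k|)$.

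The step I expect to be the main obstacle is the simultaneous, uniform (in both $k\in K$ and $\eps$) control of products of two Jost solutions and their derivatives up to third order, and the verification that after differentiation every amplitude remains in $L^1$. This is exactly where the weight hierarchy $\beta>2\alpha$ is essential: it must dominate the at-most-$e^{2\alpha|\zeta|}$ growth of a product of two Jost solutions in the strip $|\Im k|<\alpha$ by the exponential localization supplied by the potential coefficients and by the Cancellation Lemma, and it is what forces the boundary contributions in each integration by parts to vanish. The aperiodic summation is the second delicate point: the separation bounds $|\lambda_j|\ge\theta$ and $|\lambda_j+\lambda_l|\ge\theta$ for $j+l\ne0$ are precisely what keep the integration-by-parts factors $|\lambda_j|^{-n}$ and $|\lambda_j+\lambda_l|^{-n}$ uniformly bounded, reducing every sum to the finite quantities $\sum_j\big|q_j\big|_{W^{3,\infty}_\beta}$ and the summed amplitude bounds of Lemma~\ref{Lem:cancellation}.
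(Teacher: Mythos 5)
Your proposal is correct and follows essentially the same route as the paper: three integrations by parts against $e^{2\pi i\lambda_j\zeta/\eps}$ for the first integral, two against $e^{2\pi i\lambda_j\zeta/\eps}$ and $e^{2\pi i(\lambda_j+\lambda_l)\zeta/\eps}$ for the second and third, and a direct absolute bound for the fourth, all fed by the Jost estimates of Lemma~\ref{Lem:f-gen}, the amplitude bounds of the Cancellation Lemma~\ref{Lem:cancellation}, and the frequency separation~\eqref{hyp-lambda}. Your explicit remarks on the weight hierarchy $\beta>2\alpha$, the vanishing of boundary terms, and why the first and third estimates are $M_K$-free are consistent with (and slightly more detailed than) the paper's argument.
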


Applying Lemma~\ref{Lem:oscillations} to~\eqref{eq:Iqsigma3} yields the desired $\O(\eps^3)$ bound on $I^{[q_{\rm av}+\seff,q_{\rm av}+q_\eps]}(k)$. Proposition~\ref{prop:k-small} and therefore Theorem~\ref{thm:t-conv} follow. We now turn to the proofs of Lemmata~\ref{Lem:cancellation} and~\ref{Lem:oscillations}, in Sections~\ref{sec:ProofLemmaCancellation} and~\ref{sec:ProofLemmaOscillation}.

\subsection{Proof of Lemma~\ref{Lem:cancellation}}\label{sec:ProofLemmaCancellation}
For ease of presentation, we will use the simplified notation for the expression in~\eqref{eq:defI}:
 \begin{align} J[q_{\rm av},q_\eps](\zeta;k)  
 &\equiv \sum_{j\ne0}\ \int_\zeta^\infty \m(\zeta,y;k)\ q_j(y)\ e^{c{\lambda_j}y/\eps} \ f(y)\ \dd z,\label{Jrep}
 \end{align}
where  $c=2\pi i$, $f(y)=f^{q_{\rm av}+q_\eps}_+(y;k)$ and
\[ \m(\zeta,y;k) \ =\ \frac{f^{q_{\rm av}}_+(\zeta;k)f^{q_{\rm av}}_-(y;k)-f^{q_{\rm av}}_-(\zeta;k)f^{q_{\rm av}}_+(y;k)}{\mathcal{W}[f^{q_{\rm av}}_+ ,f^{q_{\rm av}}_-]}.\]
To make explicit the smallness of certain terms due to cancellations, we shall integrate by parts, keeping in mind that we do not control more than two derivatives of $f\equiv f^{q_{\rm av}+q_\eps}_+$. To evaluate boundary terms which arise, we shall use that 
\[\left.\{\m(\zeta,y;k),\partial_y \m(\zeta,y;k),\partial_y^2\m(\zeta,y;k)\}\right|_{y=\zeta}=\{0,1,0\}.\]

We now embark on the detailed expansion. From~\eqref{Jrep}, using integration by parts, one has
 \[ J[q_{\rm av},q_\eps](\zeta;k) \ \equiv \ \sum_j \Big(\frac\eps{c{\lambda_j}}\Big)^2\Big[q_j\ f\ e^{c{\lambda_j}\zeta/\eps} \ + \ \int_\zeta^\infty \partial_y^2(\m\ q_j\ f)\ e^{c{\lambda_j}y/\eps} \dd y\Big] .\]
 Decompose the integrand by using: $\partial_y^2(\m\ q_j\ f)=\partial_y^2(\m\ q_j)\ f  +  2\partial_y(\m\ q_j)\ \partial_y f  +  \m\ q_j\ \partial_y^2 f$.
The first two terms can be integrated by parts once more. This gives for~${j\ne0}$:
\begin{align*}
\int_\zeta^\infty \partial_y^2(\m\ q_j)\ f e^{c{\lambda_j}y/\eps} \dd y \ &= \ -\frac\eps{c{\lambda_j}}\int_\zeta^\infty \partial_y\big(\partial_y^2(\m\ q_j)\ f \big)\ e^{c{\lambda_j}y/\eps} \dd y \ - \ 2\frac\eps{c{\lambda_j}} q_j'(\zeta) f(\zeta) e^{c{\lambda_j}\zeta/\eps}, \\
\int_\zeta^\infty \partial_y(\m\ q_j)\ \partial_y f e^{c{\lambda_j}y/\eps} \dd y \ &= \ -\frac\eps{c{\lambda_j}}\int_\zeta^\infty \partial_y\big(\partial_y(\m\ q_j)\ \partial_y f \big)\ e^{c{\lambda_j}y/\eps} \dd y \ - \ \frac\eps{c{\lambda_j}} q_j(\zeta) \ f'(\zeta) e^{c{\lambda_j}\zeta/\eps}.
\end{align*}
As for the last term, we use the equation for the Jost solution, $f$, to express $\partial_y^2 f $ in terms of $f$:
$ \partial_y^2 f = \partial_y^2
 f^{ q_{\rm av}+q_\eps }_+ = (q_{\rm av}+q_\eps-k^2) f^{q_{\rm av}+q_\eps}_+$.
 Thus we eventually obtain:
\begin{align}\label{eq:devI}
 J[q_{\rm av},q_\eps](\zeta;k) \ &= \ \sum_{j\ne0} \big(\frac\eps{c{\lambda_j}}\big)^2\Big[q_j\ f \ e^{c{\lambda_j}\zeta/\eps} \ + \ \int_\zeta^\infty \m q_j (q_{\rm av}+q_\eps-k^2) f\ e^{c{\lambda_j}y/\eps} \dd y \nn\\
&\quad \ + \ \frac\eps{c{\lambda_j}}\big\{\sum_{l,m,n}c_{lmn}\int_\zeta^\infty \left(\ \partial^l \m\ \partial^m q_j\partial^n f\ \right) e^{c{\lambda_j}y/\eps} \dd y \ -2 \ ( q_j f)' \ e^{c{\lambda_j}\zeta/\eps}\big\} \Big] ,\end{align}
with $0\leq l,m\leq 3,\ 0\leq n\leq 2$, and $c_{lmn}\in\NN$. 

We now study each of the terms of~\eqref{eq:devI} separately, beginning with an $\mathcal{O}(\eps^3)$ bound on the curly bracket terms in~\eqref{eq:devI}. Using the estimates of Lemmata~\ref{Lem:f-gen} and~\ref{Lem:m}, one has for any $0\leq l,m\leq 3,\ 0\leq n\leq 2$,
%
\begin{align*} 
 \left|\partial_y^l \m(\zeta,y;k)\partial_y^m q_j(y) \partial_y^n f^{q_{\rm av}+q_\eps}_+(y;k)\right| \ \leq \ M_K\ C (1+|k|^l)\big(1+ |y-\zeta|(1+|y|)(1+|\zeta|) e^{\alpha|\zeta|}e^{\alpha|y|}\big) &\\
\times (1+|k|^n)(1+|y|)e^{\alpha|y|}\big\vert\partial_y^m q_j(y)\big\vert. &
\end{align*}
Therefore, the contribution to $J[q_{\rm av},q_\eps]$ of the sum over all integrals in curly brackets in~\eqref{eq:devI} is bounded by $\eps^3 M_K C\left(\big\bracevert V \big\bracevert,\sup_{k\in K}|k|\right)(1+|\zeta|)^2e^{\alpha|\zeta|}$,
%
uniformly for $k\in K$. The boundary term in curly brackets satisfy a similar bound. Its contribution is bounded by 
 $\eps^3 M_K C\left(\big\bracevert V \big\bracevert,\sup_{k\in K}|k|\right)$.

We now turn to the first two terms, in square brackets, of~\eqref{eq:devI}. Using the Fourier decomposition of $q_\eps(x)$,~\eqref{q-eps-per}, one sees that there are two types of terms: (a) terms where $\lambda_l=-\lambda_j$ ($l=-j$), $q_{-j} e^{-2i\pi {\lambda_j}y/\eps}$, where no oscillations remain due to phase-cancellation,
 and (b) contributions from terms where $\lambda_l+\lambda_j\ne0$, which are highly oscillatory for $\eps$ small.
In these latter terms, an additional factor of $\eps$ is gained via one more integration by parts.
 Precisely, one has 
\begin{multline*}
\int_\zeta^\infty \m q_j (q_{\rm av}+q_\eps-k^2) f\ e^{c{\lambda_j}y/\eps} \dd y \ = \ \int_\zeta^\infty \m q_j q_{-j} f\ \dd y \\  + \ \int_\zeta^\infty \m q_j f\ \big( (q_{\rm av}-k^2) e^{c{\lambda_j}y/\eps}+\sum_{l\notin \{0,-j\}}q_i e^{c{({\lambda_l}+{\lambda_j})}y/\eps} \big)\dd y.
\end{multline*}
The last terms can be integrated by parts; the resulting integral and boundary terms are estimated as above. Finally, recalling that $f=f^{q_{\rm av}+q_\eps}$, we obtain
 \begin{align}\label{eq:Ij2}
 J[q_{\rm av},q_\eps](\zeta;k) \ &= \ \sum_{j\ne0} \big(\frac\eps{c{\lambda_j}}\big)^2\left[q_j\ f^{q_{\rm av}+q_\eps}(\zeta;k)\ e^{c{\lambda_j}\zeta/\eps} \right. \nn\\
 & + \left. \int_\zeta^\infty \m(\zeta,y;k)\ q_j(y) q_{-j}(y) f^{q_{\rm av}+q_\eps}(y;k) \dd y\right]\ 
 + \ \eps^3 R^\eps(\zeta;k),
 \end{align}
 with $\big\vert R^\eps(\zeta;k)\big\vert \ \leq \ M_K\ C\Big(\big|q\big|_{W^{3,\infty}_\beta},\sup_{k\in K}|k|\Big)\ (1+|\zeta|^2)\ e^{\alpha|\zeta|}$. 
 
Now multiply~\eqref{eq:Ij2} by $q_\eps(\zeta)=\sum_{l\ne0}q_l(\zeta)\exp(2\pi i {\lambda_l}\zeta/\eps)$ and then add the result to $-\sigma f_+^{q_{\rm av}+q_\eps}$ to obtain (decomposing again into non-oscillatory and highly oscillatory terms and using the notation $c=2\pi i$):
\begin{align}
&-\sigma(\zeta) f^{q_{\rm av}+q_\eps}_+(\zeta;k) + q_\eps(\zeta)\ J[q_{\rm av},q_\eps](\zeta;k)\label{cancel} \\ 
&\quad\quad= \left( -\sigma(\zeta)\ +\ \sum_{j\ne0} \Big(\frac{\eps}{c{\lambda_j}}\Big)^2\ q_j(\zeta)q_{-j}(\zeta)\ \right)\ f^{q_{\rm av}+q_\eps}_+(\zeta;k) \nn\\
&\quad\qquad +\ \sum_{l\notin \{0,-j\}}\sum_{j\ne0}\ \left(\frac{\eps}{c{\lambda_j}}\right)^2\ \left[ q_lq_j e^{c({\lambda_l}+{\lambda_j})\zeta/\eps} f^{q_{\rm av}+q_\eps}_+\right] \nn \\
&\quad\qquad +\ 
\sum_{l\neq0}\sum_{j\ne0}\left(\frac{\eps}{c{\lambda_j}}\right)^2 \left[ q_le^{c{\lambda_l}\zeta/\eps}\int_\zeta^\infty \m(\zeta,y)\ q_j(y) q_{-j}(y) f^{q_{\rm av}+q_\eps}(y;k) \dd y
\right]\nn \\
&\quad\qquad  + \ \eps^3 q_\eps(\zeta) R^\eps(\zeta;k) .
\nn\end{align}
The first term on the right hand side of~\eqref{cancel} is non-oscillatory in $\zeta$ for small~$\eps$.
We remove it by choosing 
\begin{equation}\label{eq:defsigma}
\sigma(\zeta) \ =\ \seff(\zeta)\ \equiv \ \sum_{j\neq 0}\ \Big(\frac{\eps}{2i\pi{\lambda_j}}\Big)^2 q_{-j}(\zeta) q_j(\zeta) \ = \ -\frac{\eps^2 }{4\pi^2}\sum_{j\neq 0}\frac{|q_{j}(\zeta)|^2}{{\lambda_j}^2}\ .
\end{equation}
Then 
\begin{multline*}
-\seff(\zeta) f^{q_{\rm av}+q_\eps}_+(\zeta;k) + q_\eps(\zeta) J[q_{\rm av},q_\eps](\zeta;k) \\
 \ = \ \eps^2\ \sum_{l\neq0} \t q_l(\zeta)e^{2i\pi {\lambda_l}\zeta/\eps}\ +\ \eps^2\sum_{{\substack{j,l\neq0\\ j+l\ne0 } }} \t q_{j,l}(\zeta)e^{2i\pi {(\lambda_j+\lambda_l)}\zeta/\eps} \ + \ \eps^3 q_\eps(\zeta) R^\eps(\zeta;k),\end{multline*}
which we've written in the form of the statement of Lemma~\ref{Lem:cancellation}. 
 Here, $\t q_j(\zeta)$ and $\t q_{j,l}(\zeta)$ are given by
\begin{align}
\t q_l(\zeta) \ & \equiv \ q_l(\zeta)\sum_{j\neq 0}\Big(\frac{1}{2i\pi {\lambda_j}}\Big)^2\int_\zeta^\infty \m(\zeta,y;k) q_j q_{-j}(y) f^{q_{\rm av}+q_\eps}_+(y;k) \dd y,
\label{first}\\
\t q_{j,l}(\zeta) \ &\equiv \ \Big(\frac{1}{2i\pi {\lambda_j}}\Big)^2q_{l}(\zeta)q_{j}(\zeta) 
f^{q_{\rm av}+q_\eps}_+(\zeta;k). \label{second}
\end{align}

To conclude, we verify the necessary estimates on $\t q_j$ and $\t q_{j,l}(\zeta)$, and their first and second derivatives. 

As for~\eqref{first}, we use Lemmata~\ref{Lem:f-gen} and~\ref{Lem:m}, and obtain
\[\left| \int_\zeta^\infty \m(\zeta,y;k) q_j q_{-j}(y) f^{q_{\rm av}+q_\eps}_+(y;k) \dd y \right| \ \leq \ M_K C(\big\bracevert V \big\bracevert,\sup_{k\in K}|k|)(1+|\zeta|^2)e^{\alpha|\zeta|}.\]
For the derivatives, we use
\begin{align*}
 \partial_\zeta \int_\zeta^\infty\!\! \m(\zeta,y;k) q_j q_{-j}(y) f^{q_{\rm av}+q_\eps}_+(y;k) \dd y & =  \int_\zeta^\infty\!\! \partial_\zeta^2 \m(\zeta,y;k) q_j q_{-j}(y) f^{q_{\rm av}+q_\eps}_+(y;k) \dd y , \\
\partial_\zeta^2 \int_\zeta^\infty \m(\zeta,y;k) q_j q_{-j}(y) f^{q_{\rm av}+q_\eps}_+(y;k) \dd y & 
 =  \int_\zeta^\infty\!\! \partial_\zeta^2\m(\zeta,y;k) q_j q_{-j}(y) f^{q_{\rm av}+q_\eps}_+(y;k) \dd y \\
 & \quad -q_j q_{-j}(\zeta) f^{q_{\rm av}+q_\eps}_+(\zeta;k) ,
\end{align*}
so that the integrals are uniformly bounded in the same way.
As these objects are multiplied by $q_l,\ q_l'$ or $q_l''$, and since $q_l\in W^{2,\infty}_{\beta}$, it follows
\[|\t q_l(\zeta) e^{\beta|\zeta|}|+|\t q_l'(\zeta) e^{\beta|\zeta|}|+|\t q_l''(\zeta) e^{\beta|\zeta|}| \ \leq \ M_K C\big(|q_l|_{W^{2,\infty}_\beta},\sup_{k\in K}|k|\big)(1+|\zeta|^2) e^{\alpha |\zeta|}, \]
uniformly for $k\in K$.

As for~\eqref{second}, one has 
\begin{align*} \left|q_{l}(\zeta)q_{j}(\zeta) f^{q_{\rm av}+q_\eps}_+(\zeta;k)\right| &\leq  |q_{l}(\zeta)||q_{j}f^{q_{\rm av}+q_\eps}_+(\zeta;k)|  \leq  e^{-\beta|\zeta|}|q_{l}|_{L^\infty_\beta} |q_{j}f^{q_{\rm av}+q_\eps}_+(\cdot;k)|_{L^\infty} \\ & \leq  C(\big\bracevert V \big\bracevert,\sup_{k\in K}|k|)|q_{j}|_{L^\infty_\beta} |q_{l}|_{L^\infty_\beta} e^{-\beta|\zeta|},
\end{align*}
where we used Lemma~\ref{Lem:f-gen} to estimate $f^{q_{\rm av}+q_\eps}_+$. The first and second derivatives are bounded in the same way, and the double series converge.

 This concludes the proof of the Cancellation Lemma~\ref{Lem:cancellation}.

\subsection{Proof of Lemma~\ref{Lem:oscillations}}\label{sec:ProofLemmaOscillation}

The last estimate of Lemma~\ref{Lem:oscillations} follows from bounds on $R^\eps$ (see Lemma~\ref{Lem:cancellation}) and $f^{q_{\rm av}+\seff}_-(y;k)$ (see Lemma~\ref{Lem:f-gen}), and the decay Hypotheses {\bf (V)} on $q_\eps$. One has 
\begin{align*}
&\Big| \int_{-\infty}^\infty \ f_-^{q_{\rm av}+\seff}(y;k) q_\eps(y) R^\eps(y;k)\ \dd y \Big| \\
 &\le\ M_K \ C(\big\bracevert V \big\bracevert,\sup_{k\in K}|k|)\ \int_{-\infty}^\infty (1+|y|)^3 e^{2\alpha|y|}|q_\eps(y)|\ \dd y
\ \ \le\ M_K\ C(\big\bracevert V \big\bracevert,\sup_{k\in K}|k|\big).
\end{align*}

To prove the $\eps^2$-smallness of the second estimate of Lemma~\ref{Lem:oscillations}, we integrate by parts: 
\[ \int_{-\infty}^\infty f^{q_{\rm av}+\seff}_-(y;k) \t q_j e^{2i\pi {\lambda_j}/\eps} \dd y = \ \left(\frac{\eps}{2i\pi {\lambda_j}}\right)^2\int_{-\infty}^\infty (f^{q_{\rm av}+\seff}_-(\cdot ;k)\t q_j)''(y)e^{2i\pi {\lambda_j}y/\eps}\dd y.
\]
The estimate follows as previously from the bounds on $\t q_j$ (Lemma~\ref{Lem:cancellation}) and the ones on $f^{q_{\rm av}+\seff}_-(y;k)$ (Lemma~\ref{Lem:f-gen}), as well as the hypotheses on $\lambda_j$:~\eqref{hyp-lambda} in Hypotheses {\bf (V)}.

The third estimate follows as previously, as 
\begin{multline*}
 \int_{-\infty}^\infty f^{q_{\rm av}+\seff}_-(y;k) \t q_{j,l} e^{2i\pi ({\lambda_j+\lambda_l})/\eps} \dd y\\
= \ \left(\frac{\eps}{2i\pi ({\lambda_j+\lambda_l})}\right)^2\int_{-\infty}^\infty (f^{q_{\rm av}+\seff}_-(\cdot ;k)\t q_{j,l})''(y)e^{2i\pi {\lambda_j}y/\eps}\dd y.
\end{multline*}
 The estimate follows, using now the bounds on $\t q_{j,l}$ (Lemma~\ref{Lem:cancellation}).
Finally, we use three integration by parts for the first estimate of Lemma~\ref{Lem:oscillations}:
\begin{multline*}
\int_{-\infty}^\infty f^{q_{\rm av}+\seff}_-(y;k) q_j(y) f^{q_{\rm av}}_+(\cdot ;k) e^{\frac{2i\pi {\lambda_j}}{\eps}} \dd y\\
= 
\left(\frac{i\eps}{2\pi {\lambda_j}}\right)^3\int_{-\infty}^\infty (f^{q_{\rm av}+\seff}_-(\cdot ;k)q_j f^{q_{\rm av}}_+(\cdot ;k))'''(y)e^{\frac{2i\pi {\lambda_j}y}{\eps}}\dd y,\end{multline*}
which is estimated using the third item of Lemma~\ref{Lem:f-gen}, and Hypotheses {\bf (V)}.

\appendix

\section{Some useful estimates used throughout the paper}\label{sec:jost-tools}
We recall that the Jost solution is defined through the Volterra equation
\begin{equation}\label{eq:Volterra-in-appendix}
f^{V}_+(x;k) -e^{ikx} \ = \ \int_x^\infty \frac{\sin(k(y-x))}{2ik}V(y)f^{V}_+(y;k) \dd y.
\end{equation}
 A detailed discussion of Jost solutions, $f_\pm(x;k)$, applying to $\Im(k)\ge0$ can be found in~\cite{DeiftTrubowitz79}, where it is assumed that $V\in \mathcal{L}^1_{2}$. We present in the following Lemma the results holding when $k\in \RR$, and deal with the analytic continuation in a complex strip around the real axis afterwards.
 \begin{Lemma} \label{Lem:f-gen-k-real}
If $k\in\RR$ and $V\in \mathcal{L}^1_{2}$, then one has
\begin{align}
| f^V_\pm(x;k) | \ &\leq \ C(1+|k|)^{-1}(1+|x|),\label{fplus-allx-k-real} \\
|\partial_x f^V_\pm(x;k)| \ &\leq \ C\frac{1+|k|(1+|x|)}{1+|k|} \ \leq \ C(1+|x|),\label{Dfplus-allx-k-real} \\ 
|\partial_x^2 f^V_\pm(x;k)| \ &\leq \ |V(x)-k^2| |f^V_+(x;k)| \ \leq \ \ C(1+|k|)(1+|x|),\label{D2fplus-allx-k-real}
\end{align}
where $C=C\big(\big|V\big|_{\mathcal{L}^1_{2}}\big)$. Moreover, if $\partial_x V\in \mathcal{L}^1_{2}$, then 
\[ \left| \partial_x^3 f^V_\pm(x;k) \right| \ \leq \ C(1+|k|^2)(1+|x|),
\ \ 
{\rm with}\ C=C(\big|V\big|_{\mathcal{W}^{1,1}_2}\big).\]


 \end{Lemma}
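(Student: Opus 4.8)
The plan is to estimate the Jost solution and its derivatives directly from the Volterra integral equation~\eqref{eq:Volterra-in-appendix} via a standard iteration/Gronwall argument, and then obtain the derivative bounds by differentiating that equation. Throughout I will use the elementary inequality $|\sin(k(y-x))/k|\le C(1+|x|)(1+|y|)/(1+|k|)$ valid for $y\ge x$ and $k\in\RR$, together with $|\sin(k(y-x))|\le 1$ and $|\cos(k(y-x))|\le 1$; these are the only kernel facts needed.

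First I would establish~\eqref{fplus-allx-k-real}. Setting $g(x;k)=(1+|k|)(1+|x|)^{-1}|f^V_+(x;k)|$ and inserting the kernel bound into~\eqref{eq:Volterra-in-appendix}, the unit-modulus of $e^{ikx}$ and the weight estimate give a closed inequality of the form $g(x;k)\le 1 + \int_x^\infty (1+|y|)^2|V(y)|\,g(y;k)\,\dd y$, where I have used $(1+|x|)^{-1}(1+|x|)(1+|y|)(1+|y|)=(1+|y|)^2$ to absorb the $x$-weights. Since $V\in\mathcal{L}^1_2$ means $(1+|y|)^2 V(y)\in L^1$, Gronwall's inequality yields $g(x;k)\le \exp(|V|_{\mathcal{L}^1_2})$, which is exactly~\eqref{fplus-allx-k-real} with $C=C(|V|_{\mathcal{L}^1_2})$; the estimate for $f^V_-$ is identical after reflecting $x\mapsto -x$. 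Next, for~\eqref{Dfplus-allx-k-real} I would differentiate~\eqref{eq:Volterra-in-appendix} in $x$, which produces $\partial_x f^V_+(x;k)=ike^{ikx}-\int_x^\infty \cos(k(y-x))V(y)f^V_+(y;k)\,\dd y$ (the boundary term vanishes because $\sin(k\cdot 0)=0$). Bounding $|\cos|\le 1$ and using the already-proved bound~\eqref{fplus-allx-k-real} on $f^V_+$ under the integral, the integral contributes $O(1+|x|)$ uniformly, while the free term contributes $|k|$; collecting these gives the numerator $1+|k|(1+|x|)$ over the denominator $1+|k|$, hence~\eqref{Dfplus-allx-k-real}. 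The second-derivative bound~\eqref{D2fplus-allx-k-real} is immediate from the ODE itself: $\partial_x^2 f^V_+=(V-k^2)f^V_+$, so $|\partial_x^2 f^V_+|\le|V(x)-k^2|\,|f^V_+|$, and substituting~\eqref{fplus-allx-k-real} produces the stated $(1+|k|)(1+|x|)$ growth after noting $|V(x)-k^2|\le|V(x)|+k^2$ and that $|V(x)|$ is controlled pointwise only in an $L^1$-averaged sense—here the bound is understood with the explicit factor $|V(x)-k^2|$ retained, exactly as written in the statement.

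For the third-derivative estimate under the extra hypothesis $\partial_x V\in\mathcal{L}^1_2$, I would differentiate the ODE once more: $\partial_x^3 f^V_+=(\partial_x V)f^V_+ + (V-k^2)\partial_x f^V_+$. The first summand is controlled by $|\partial_x V(x)|(1+|k|)^{-1}(1+|x|)$ using~\eqref{fplus-allx-k-real}, and the second by $|V(x)-k^2|$ times the bound~\eqref{Dfplus-allx-k-real}; the worst power of $k$ comes from $k^2\cdot\partial_x f$, which is $O(k^2(1+|x|))$, giving the factor $(1+|k|^2)(1+|x|)$. The constant now depends on both $|V|_{\mathcal{L}^1_2}$ and $|\partial_x V|_{\mathcal{L}^1_2}$, i.e.\ on $|V|_{\mathcal{W}^{1,1}_2}$, as claimed.

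The main obstacle, and the only place requiring care, is the $k$-uniformity of the Gronwall step: the kernel $\sin(k(y-x))/k$ degenerates as $k\to\infty$ unless one extracts the decay $(1+|k|)^{-1}$ cleanly, and it must be checked that the constant in the exponential depends on $|V|_{\mathcal{L}^1_2}$ alone and not on $k$. I expect this to work precisely because the weight $(1+|y|)^2$ is integrable against $V$ and the kernel bound already carries the full $(1+|k|)^{-1}$ factor, so the iterated Neumann series converges with a $k$-independent bound. The derivative estimates then follow mechanically, the subtlety being only the bookkeeping of which weight and which power of $k$ is picked up at each differentiation; once~\eqref{fplus-allx-k-real} is in hand the remaining bounds are algebraic consequences of the differentiated Volterra equation and the Schr\"odinger ODE.
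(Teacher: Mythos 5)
Your overall strategy (iteration/Gronwall on the Volterra equation, then differentiating the integral equation and the ODE) is the natural elementary route, and it is essentially what lies behind the reference the paper actually invokes (Deift--Trubowitz, Lemma~1, stated for $m_\pm(x;k)=f_\pm(x;k)e^{\mp ikx}$). However, there is a genuine gap at your very first Gronwall step, and it is not a repairable technicality. With $g(x;k)=(1+|k|)(1+|x|)^{-1}|f^V_+(x;k)|$, multiplying the Volterra equation~\eqref{eq:Volterra-in-appendix} by $(1+|k|)(1+|x|)^{-1}$ makes the inhomogeneous term contribute $(1+|k|)(1+|x|)^{-1}|e^{ikx}|=(1+|k|)/(1+|x|)$, which is of size $1+|k|$ at $x=0$, not $1$. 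Your kernel bound $|\sin(k(y-x))/k|\le C(1+|x|)(1+|y|)/(1+|k|)$ is correct and does put the integral term in the form you claim, but the closed inequality $g\le 1+\int_x^\infty(1+|y|)^2|V|\,g$ is not what the computation produces: the $(1+|k|)^{-1}$ gain attaches only to the perturbative part of $f^V_+$, never to the free wave. What your argument actually yields is $|f^V_\pm(x;k)|\le C(1+|x|)$ together with $|f^V_\pm(x;k)-e^{\pm ikx}|\le C(1+|k|)^{-1}(1+|x|)$ (equivalently, $|m_\pm-1|\le C(1+|k|)^{-1}(1+|x|)$, which is the Deift--Trubowitz statement). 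Indeed~\eqref{fplus-allx-k-real} in the form you set out to prove already fails for $V\equiv 0$, since $|e^{ikx}|=1$ at $x=0$ is not $O((1+|k|)^{-1})$. The same defect recurs in your derivation of~\eqref{Dfplus-allx-k-real}: the free term $ike^{ikx}$ contributes $|k|$, which is not dominated by $C\,(1+|k|(1+|x|))/(1+|k|)$ (the right-hand side is $O(1)$ at $x=0$); collecting $|k|+C(1+|x|)$ gives $C(1+|k|)(1+|x|)$, not the stated quotient.

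The remaining steps are sound modulo the above: the identity $\partial_x^2 f^V_+=(V-k^2)f^V_+$ and the once-more-differentiated ODE do reduce~\eqref{D2fplus-allx-k-real} and the third-derivative bound to whatever zeroth- and first-order estimates one has actually established, and your bookkeeping of the powers of $k$ there is consistent. But a correct write-up must either work with $m_\pm-1$ and $\partial_x m_\pm$ and then convert back to $f_\pm$ (which is how the paper proceeds, by citation), or state the conclusions for $f_\pm$ without the spurious $(1+|k|)^{-1}$ gain on the leading free-wave contribution. As written, your proof asserts a normalized inequality at the Gronwall stage that your own kernel estimate does not deliver, and the subsequent conclusions inherit that error.
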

 \begin{proof}
As for the first two estimates, equivalent bounds are given in~\cite{DeiftTrubowitz79}, Lemma 1, for the function $m_\pm(x;k) \ \equiv \ f_\pm(x;k)e^{\pm ikx}$ . The results for $f_\pm(x;k)$ follow straightforwardly. The last two estimates are a direct consequence of~\eqref{eq:Volterra-in-appendix}.
 \end{proof}

 If $e^{2\alpha|x|}V\in L^1$, then $f_\pm(x;k)$ has an analytic continuation to $\Im(k) > -\alpha$. Some results are presented in~\cite{ReedSimonIII}.
In this section we review and obtain the required extensions of these results. In order to simplify the results, we also restrict $k$ to the complex strip $|\Im(k)| < \alpha$.
\begin{Lemma}\label{Lem:f-gen}
If $|\Im(k)|<\alpha$ and $V\in L^\infty_\beta$, with $\beta>2\alpha\geq0$, then one has
\begin{align}
| f^V_\pm(x;k) | \ &\leq \ C (1+|x|)e^{\alpha |x|},\label{fplus-allx} \\
|\partial_x f^V_\pm(x;k)| \ &\leq \ C(1+|k|)(1+|x|)e^{\alpha |x|},\label{Dfplus-allx} \\ 
|\partial_x^2 f^V_\pm(x;k)| \ &\leq \ |V(x)-k^2| |f^V_+(x;k)| \ \leq \ \ C(1+|k|^2)(1+|x|)e^{\alpha|x|},\label{D2fplus-allx}
\end{align}
where $C=C\big(\big|V\big|_{L^\infty_\beta}\big)$. Moreover, if $V\in W^{1,\infty}_\beta$, then 
\[ \left| \partial_x^3 f^V_\pm(x;k) \right| \ \leq \ C(1+|k|^3)(1+|x|)e^{\alpha |x|},\ \ 
{\rm with}\ C=C(\big|V\big|_{W^{1,\infty}_\beta}\big)\ .\]
\end{Lemma}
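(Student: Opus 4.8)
The plan is to follow the real-$k$ argument behind Lemma~\ref{Lem:f-gen-k-real}, now tracking the analytic continuation into the strip $|\Im(k)|<\alpha$ together with the exponential spatial weight $e^{\alpha|x|}$. By uniqueness of the Jost solutions and the reflection $x\mapsto -x$, $V\mapsto V(-\cdot)$, which interchanges $f^V_+$ and $f^V_-$, it suffices to treat $f^V_+$. That $f^V_\pm(\cdot;k)$ admit an analytic continuation to $\Im(k)>-\alpha$ when $V\in L^\infty_\beta$ is classical (see~\cite{ReedSimonIII}); what I must supply are the explicit bounds, with the stated powers of $k$ and the weight $e^{\alpha|x|}$, valid uniformly up to the edge $k=0$.

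First I would record the key kernel estimate: writing $k^{-1}\sin(k(y-x))=\int_0^{y-x}\cos(kt)\,\dd t$ and using $|\cos(kt)|\le\cosh(\alpha t)\le e^{\alpha t}$ for $|\Im(k)|\le\alpha$, I obtain, for $y\ge x$, the bound $|k^{-1}\sin(k(y-x))|\le (y-x)\,e^{\alpha(y-x)}$, which is uniform as $k\to0$ (no spurious $1/k$ singularity). I would then run a weighted Volterra/Gronwall estimate for $f^V_+$ directly. Setting $g(x)\equiv (1+|x|)^{-1}e^{-\alpha|x|}|f^V_+(x;k)|$ and inserting the weight into~\eqref{eq:Volterra-in-appendix}, I would use $|e^{ikx}|=e^{-\Im(k)x}\le e^{\alpha|x|}$, the kernel bound above, the elementary inequality $(y-x)\le(1+|x|)(1+|y|)$ for $y\ge x$, and $|V(y)|\le |V|_{L^\infty_\beta}e^{-\beta|y|}$ to reach an inequality of the form
\[
g(x)\ \le\ 1\ +\ \int_x^\infty K(y)\,g(y)\,\dd y,\qquad K(y)\ \equiv\ C\,(1+|y|)^2\,e^{2\alpha\max(y,0)}\,|V(y)|.
\]
Here the hypothesis $\beta>2\alpha$ is exactly what makes $K\in L^1(\RR)$: the kernel growth $e^{\alpha(y-x)}$ and the weight combine into $e^{2\alpha y}$ as $y\to+\infty$, which is beaten by $e^{-\beta y}$. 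Since $\int_x^\infty K\le\|K\|_{L^1}$ uniformly in $x$, the Volterra form of Gronwall's inequality gives $g(x)\le\exp(\|K\|_{L^1})\le C(|V|_{L^\infty_\beta})$, which is~\eqref{fplus-allx}; analyticity in $k$ follows from the locally uniform convergence of the underlying Neumann series, whose terms are analytic in $k$ and dominated by the same $L^1$ kernel.

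The derivative bounds will then follow as in the real case. Differentiating~\eqref{eq:Volterra-in-appendix} in $x$ yields $\partial_x f^V_+=ik\,e^{ikx}-\int_x^\infty\cos(k(y-x))\,V(y)\,f^V_+(y;k)\,\dd y$, with vanishing boundary term; the free term contributes $|k|$ and the integral is controlled as above, producing~\eqref{Dfplus-allx}. Estimate~\eqref{D2fplus-allx} is immediate from $\partial_x^2 f^V_+=(V-k^2)f^V_+$ together with $|V(x)|\le|V|_{L^\infty_\beta}$, so that $|V(x)-k^2|\le C(1+|k|^2)$. Differentiating once more gives $\partial_x^3 f^V_+=V'f^V_++(V-k^2)\partial_x f^V_+$; bounding $|V'|\le|V|_{W^{1,\infty}_\beta}$ and feeding in~\eqref{fplus-allx}--\eqref{Dfplus-allx} yields the factor $(1+|k|)(1+|k|^2)$, hence the advertised $(1+|k|^3)$, with constant $C=C(|V|_{W^{1,\infty}_\beta})$.

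I expect the main obstacle to be the base estimate~\eqref{fplus-allx}: obtaining a bound that is at once sharp in its spatial growth—$(1+|x|)e^{\alpha|x|}$, rather than the doubly-exponential growth a careless Gronwall estimate would produce—and uniform as $k\to0$. Both features rest on the two observations highlighted above, namely the $k$-uniform, non-singular kernel bound and the $L^1$ character of the weighted Volterra kernel $K$, the latter demanding precisely $\beta>2\alpha$. Once~\eqref{fplus-allx} is secured, the derivative estimates are routine differentiations of the integral equation and the differential equation.
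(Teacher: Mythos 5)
Your proof is correct and follows essentially the same route as the paper: the Volterra equation~\eqref{eq:Volterra-in-appendix}, the $k$-uniform kernel bound $|k^{-1}\sin(k(y-x))|\le (y-x)e^{\alpha(y-x)}$, and a weighted Volterra/Gronwall (Neumann-series) estimate whose kernel is integrable precisely because $\beta>2\alpha$, followed by differentiation of the integral equation and of the ODE for the higher derivatives. The only cosmetic difference is that you run a single weighted Gronwall over all of $\RR$ with weight $(1+|x|)e^{\alpha|x|}$, whereas the paper treats $x\ge0$ (via the iterates of the Volterra equation, citing Reed--Simon) and $x\le0$ (via a separate Gronwall with weight $(C_0-x)e^{\alpha|x|}$) in two steps.
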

\begin{proof} We prove bounds for $f_+^V$. Analogous bounds $f^V_-(x;k)$ are similarly proved and are obtained from the above by replacing $x$ by $-x$, and $x\ge0$ by $-x\ge0$ {\em etc}.

The estimates follow from the Volterra equation~\eqref{eq:Volterra-in-appendix} satisfied by the Jost solutions, and make use of the following bounds: for $k\in\CC$, and for $y\geq x$, one has
\begin{gather}
|\cos(k(y-x))| + |\sin(k(y-x))|\  \leq \ C e^{|\Im(k)|(y-x)} \ \leq \ C e^{\alpha|x|}e^{\alpha |y|} \ ,\\
\label{eq:sin} \frac{|\sin(k(y-x))|}{|k|}\  \leq \ C\frac{y-x}{1+|k|(y-x)}e^{|\Im(k)|(y-x)} \ \leq \ C(y-x)e^{\alpha|x|}e^{\alpha |y|}\ .
\end{gather}
By Theorem XI.57 of~\cite{ReedSimonIII}, one deduces from a careful study of the iterates of the Volterra equation~\eqref{eq:Volterra-in-appendix}, that for $x\geq 0$, one has
\begin{equation}
|f^V_+(x;k)-e^{ikx}| \ \leq \ e^{\alpha|x|}|e^{Q_k(x)}-1| \ \leq \ Ce^{\alpha|x|}, \label{eq:xpos}\
\end{equation}
with $Q_k(x) \ \equiv \ \int_x^\infty \frac{4y}{1+|k|y}\ |V(y)|\ e^{2\alpha|y|} \dd y$. Equation~\eqref{fplus-allx} follows for $x\geq 0$.

As for the case $x\leq 0$,~\eqref{eq:Volterra-in-appendix} yields
\begin{align*}
|f^V_+(x;k)| \ &= \ \left|e^{ikx}+ \int_x^\infty \frac{\sin(k(y-x))}{k}V(y)f^V_+(y;k) \dd y \right| \\
& \leq \ e^{\alpha|x|} \ + \ \int_x^\infty (y-x)e^{\alpha|x|}e^{\alpha |y|} |V(y)||f^V_+(y;k)| \dd y \\
& \leq \ e^{\alpha|x|}\big[1 \ + \ \int_0^\infty y e^{\alpha |y|}|V(y)||f^V_+(y;k)| \dd y \ + \ (-x)\int_x^\infty e^{\alpha |y|}|V(y)||f^V_+(y;k)| \dd y \big]\\
& \leq \ e^{\alpha|x|}\big[C_0 \ + \ (-x)\int_x^\infty e^{\alpha |y|}|V(y)||f^V_+(y;k)| \dd y \big].
\end{align*}
We used~\eqref{eq:sin} for the first inequality; the last inequality follows from~\eqref{eq:xpos}, with $x=0$.
 Therefore, one has with $g(x) \ \equiv\ \frac{|f^V_+(x;k)|}{(C_0+(-x)) e^{\alpha|x|}}$,
\[
|g(x)| \ \leq \ 1 \ + \ \int_x^\infty e^{\alpha|y|}|V(y)||g(y;k)|(C_0+(-y)) e^{\alpha|y|} \dd y\ .
\]
By Gronwall's inequality 
\[ g(x) \ \leq \ \exp\big(\int_x^\infty (C_0+(-y))e^{2\alpha|y|}|V(y)|\dd y\big) \ \leq \ C\big(\big|V\big|_{L^\infty_\beta}\big|\big).\]
Finally, one has
\[f(x;k) \ \leq \ C\big(\big|V\big|_{L^\infty_\beta}\big|\big) (C_0+(-x)) e^{\alpha|x|} \ \leq \ C(1+|x|)e^{\alpha |x|},\]
with $C=C\big(\big|V\big|_{L^\infty_\beta}\big|\big)$. This completes the proof of~\eqref{fplus-allx}.

The proof of~\eqref{Dfplus-allx} is similar, and obtained by differentiation and estimation of the Volterra integral equation~\eqref{eq:Volterra-in-appendix}. The bound~\eqref{D2fplus-allx} is a direct consequence of $\partial_x^2f^V_+=(V -k^2)f^V_+$ and the above bounds.
\end{proof}

\begin{Lemma}\label{Lem:m}
Let $q_{\rm av}\in W^{1,\infty}_\beta$ and $k\in K$, satisfy Hypotheses {\bf (K)}. Define 
\[\m(x,y;k) \ \equiv \ \frac{f^{q_{\rm av}}_+(x;k)f^{q_{\rm av}}_-(y;k)-f^{q_{\rm av}}_-(x;k)f^{q_{\rm av}}_+(y;k)}{W[f^{q_{\rm av}}_+ ,f^{q_{\rm av}}_-]}. \]
Then one has, for $0\leq l\leq 3$,
\begin{align}
| \partial_y^l \m(x,y;k) | + | \partial_x^l \m(x,y;k) | & \leq C\ M_K \ (1+|k|)^l\left(1+ |y-x|(1+|y|)(1+|x|)e^{\alpha |x|}e^{\alpha |y|}\right), \label{est-diffm} 
\end{align}
where $C=C\big(\big|q_{\rm av}\big|_{W^{1,\infty}_\beta}\big)$, and $M_K=\max(1,\sup_{k\in K}|t^{q_{\rm av}}(k)|)<\infty$.

Restricting to $k\in\RR$, and assuming only $q_{\rm av}\in \mathcal{W}^{1,1}_{2}$, one has for $0\leq l\leq 3$
\begin{align*}
 | \partial_y^l \m(x,y;k) | \ +\ | \partial_x^l \m(x,y;k) |\ & \leq \ C(1+|k|)^{l-2}\Big(1+ |y-x|(1+|y|)(1+|x|)\Big),
\end{align*}
where $C=C\big(\big|q_{\rm av}\big|_{\mathcal{W}^{1,1}_2}\big)$.
\end{Lemma}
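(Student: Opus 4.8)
The plan is to combine two representations of $\m$: an \emph{explicit} one, which cleanly exposes the spatial decay, the exponential weights and the powers of $(1+|k|)$, and a \emph{Cauchy/Volterra} one, which is manifestly regular as $k\to0$ and supplies the crucial factor $|y-x|$. For the explicit representation, write $N(x,y;k)\equiv f^{q_{\rm av}}_+(x;k)f^{q_{\rm av}}_-(y;k)-f^{q_{\rm av}}_-(x;k)f^{q_{\rm av}}_+(y;k)$, so that by~\eqref{eq:Wronskian-vs-transmission},
\[
\m(x,y;k)\ =\ -\frac{t^{q_{\rm av}}(k)}{2ik}\,N(x,y;k).
\]
This is where the factor $M_K$ originates: on $K$ one has $|t^{q_{\rm av}}(k)|\le M_K$. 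For the second representation, observe (as recorded by $\{\m,\partial_y\m,\partial_y^2\m\}|_{y=\zeta}=\{0,1,0\}$) that for fixed $x$ the map $y\mapsto\m(x,y;k)$ solves $(-\partial_y^2+q_{\rm av}-k^2)u=0$ with Cauchy data $u(x)=0,\ \partial_y u(x)=1$, equivalently the Volterra equation
\[
\m(x,y;k)\ =\ \frac{\sin(k(y-x))}{k}\ +\ \int_x^y \frac{\sin(k(y-z))}{k}\,q_{\rm av}(z)\,\m(x,z;k)\,\dd z .
\]
Finally I would record the antisymmetry $\m(y,x;k)=-\m(x,y;k)$, which lets the $\partial_x^l$ bounds be deduced from the $\partial_y^l$ bounds.

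Next, the spatial and $k$-structure. Differentiating $N$ up to order $l$ in $x$ and $y$ and applying Leibniz' rule together with the Jost bounds of Lemma~\ref{Lem:f-gen} — which give $|\partial^a f^{q_{\rm av}}_\pm(x;k)|\le C(1+|k|)^a(1+|x|)e^{\alpha|x|}$ for $0\le a\le 3$ — yields at once
\[
|\partial^l N(x,y;k)|\ \le\ C\,(1+|k|)^l\,(1+|x|)(1+|y|)\,e^{\alpha|x|}e^{\alpha|y|},\qquad 0\le l\le3 .
\]
Multiplying by $|t^{q_{\rm av}}(k)/(2ik)|\le M_K/(2|k|)$ reproduces the claimed polynomial-exponential weights and the factor $(1+|k|)^l$, up to the troublesome $1/|k|$.

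The main obstacle is precisely this $k^{-1}$: it is singular as $k\to0$, whereas $K$ may contain $0$ (indeed in the principal case $q_{\rm av}\equiv0$ one has $t^{q_{\rm av}}\equiv1$, so $W^{-1}=(-2ik)^{-1}$ blows up), yet the asserted bound is uniform on $K$. The resolution is that the $k^{-1}$ is always absorbed into a factor $|y-x|$ coming from the numerator. I would make this quantitative through the Volterra representation, using the uniform estimate $|\sin(k(y-x))/k|\le C|y-x|e^{\alpha|y-x|}$ (compare~\eqref{eq:sin}), which carries no $k^{-1}$; a Gronwall argument on the Volterra equation then gives $|\m(x,y;k)|\le C\big(|q_{\rm av}|_{W^{1,\infty}_\beta}\big)\,|y-x|\,e^{\alpha|y-x|}$ uniformly down to $k=0$ (the weighted integral converging because $\beta>2\alpha$), and differentiating the Volterra equation transfers the same $|y-x|$ gain to $\partial_y^l\m$. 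Matching this against the explicit formula to fix the $M_K$ and $(1+|k|)^l$ dependence, and using $e^{\alpha|y-x|}\le e^{\alpha|x|}e^{\alpha|y|}$, produces the stated bound $C\,M_K\,(1+|k|)^l\big(1+|y-x|(1+|x|)(1+|y|)e^{\alpha|x|}e^{\alpha|y|}\big)$, the free ``$1$'' absorbing the near-diagonal and lower-order remainders. Reconciling the $k\to0$ behaviour in this way — guaranteeing that the $k^{-1}$ in $W^{-1}$ is everywhere paired with a $|y-x|$ — is the one genuinely delicate point; everything else is bookkeeping with the Jost estimates.

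For the second (real-$k$) assertion I would run the identical argument with Lemma~\ref{Lem:f-gen-k-real} in place of Lemma~\ref{Lem:f-gen}. Two changes occur: the weights become purely algebraic (no $e^{\alpha|\cdot|}$), and since the real-axis Jost bounds carry a prefactor $(1+|k|)^{-1}$ on each of the two factors of $N$, the power of $(1+|k|)$ drops by two, giving $(1+|k|)^{l-2}$; moreover $M_K=1$ because $|t^{q_{\rm av}}(k)|\le1$ for $k\in\RR$ by~\eqref{tr-energy}. This yields the stated algebraic bound with $C=C\big(|q_{\rm av}|_{\mathcal{W}^{1,1}_2}\big)$.
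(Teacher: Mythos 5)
Your proposal is correct and, at bottom, follows the same strategy as the paper: the explicit Wronskian formula $\m=-\tfrac{t^{q_{\rm av}}(k)}{2ik}\,N(x,y;k)$ combined with the Jost bounds of Lemma~\ref{Lem:f-gen} (resp.\ Lemma~\ref{Lem:f-gen-k-real} on the real axis) supplies the spatial weights, the factor $M_K$ and the powers of $(1+|k|)$; a Volterra--Gronwall argument exploiting the Cauchy data $\{0,1,0\}$ at $y=x$ supplies the factor $|y-x|$ that neutralizes the $k^{-1}$ coming from the Wronskian; antisymmetry gives the $\partial_x^l$ bounds; and the ODE $\partial_y^2\m=(q_{\rm av}-k^2)\m$ handles $l=2,3$ (this is where $q_{\rm av}\in W^{1,\infty}_\beta$ is used). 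The one structural difference is your choice of Duhamel kernel: the paper uses Taylor's theorem with integral remainder, i.e.\ $\m(x,y;k)=(y-x)+\int_x^y\m(x,t;k)\,(q_{\rm av}(t)-k^2)\,(y-t)\,\dd t$ (Duhamel against $-\partial_y^2$), whose Gronwall constant contains $e^{k^2(y-x)^2/4}$ and therefore forces a case split into $|k|\,|x-y|\le1$ (Gronwall) and $|k|\,|x-y|\ge1$ (explicit formula plus $|k|^{-1}\le|x-y|$), whereas your kernel $\sin(k(y-z))/k$ (Duhamel against $-\partial_y^2-k^2$) is bounded by $C|y-z|e^{\alpha|y-z|}$ uniformly on the strip, so the Gronwall constant is uniform in $k\in K$ and no case split is needed for $l=0$ --- a mild but genuine simplification, at the price of re-extracting the $(1+|k|)^l$ growth from the differentiated kernel rather than from the explicit formula. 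One subtlety that you and the paper share: the naive Gronwall exponent $\int_x^y|z-x|\,|q_{\rm av}(z)|\,\dd z$ grows like $|x|$ as $x\to-\infty$, so the resulting constant is not literally uniform in $x$; the repair is to build the target weight $|y-x|(1+|x|)(1+|y|)e^{\alpha|x|}e^{\alpha|y|}$ into the Gronwall ansatz and use $\beta>2\alpha$ to make the resulting kernel integrable --- precisely the convergence condition you flag in passing.
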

\begin{proof} Let us start with the estimate~\eqref{est-diffm} when $l=0$. One can always assume that $y>x$, since $\m(x,y;k)=-\m(y,x;k)$.
Using Taylor's theorem with remainder in the integral form, one has
\[f^{q_{\rm av}}_\pm(y;k) \ = \ f^{q_{\rm av}}_\pm(x;k) \ + \ (y-x) \big(\partial_y f^{q_{\rm av}}_\pm(y;k)\big)\big\vert_{y=x} \ + \frac12\ \int_x^y \big(\partial_y^2 f^{q_{\rm av}}_\pm(y;k)\big)\big\vert_{y=t} (y-t) \dd t.\]
It follows that
\begin{align*}\m(x,y;k) \ &= \ (y-x) + \frac12\int_x^y \frac{f^{q_{\rm av}}_+(x;k)f^{q_{\rm av}}_-(t;k)-f^{q_{\rm av}}_-(x;k)f^{q_{\rm av}}_+(t;k)}{W[f^{q_{\rm av}}_+ ,f^{q_{\rm av}}_-]} (q_{\rm av}(t)-k^2) (y-t) \dd t\\
&= \ (y-x) + \frac12\int_x^y \m(x,t;k) (q_{\rm av}(t)-k^2) (y-t) \dd t. \end{align*}
 Therefore, one has with $g_x(y) \ \equiv\ \frac{|\m(x,y;k)|}{|x-y|}$,
\[
g_x(y) \ \leq \ 1 \ + \ \frac1{2|x-y|} \int_x^y g_x(t) |x-t| |q_{\rm av}(t)-k^2| |y-t| \dd t\ \leq \ 1 \ + \ \frac1{2} \int_x^y g_x(t) |x-t| |q_{\rm av}(t)-k^2| \dd t,
\]
since $|y-t|\leq|y-x|$ for $t\in[x,y]$.
By Gronwall's inequality, one has
\[ g_x(y) \ \leq \ \exp\big(\frac12\int_x^y |x-t| |q_{\rm av}(t)-k^2|\big) \dd t \ \leq \ C\big(\big|q_{\rm av}\big|_{L^\infty_\beta}\big)e^{\frac14 k^2 (y-x)^2}.\]
Therefore, we have an estimate on $|\m(x,y;k)|$, uniformly for $k$ such that ${|k||x-y|\leq 1}$.

When $|k||x-y|\geq 1$, one has from Lemma~\ref{Lem:f-gen}
\begin{align*}
|\m(x,y;k)|  &\leq  C\frac{(1+|x|)e^{\alpha|x|}(1+|y|)e^{\alpha|y|}}{W[f^{q_{\rm av}}_+ ,f^{q_{\rm av}}_-]} \\
& \leq  CM_K(1+|x|)(1+|y|)\frac{e^{\alpha|x|}e^{\alpha|y|}}{|k|}  \leq  CM_K(1+|x|)(1+|y|)|x-y| e^{\alpha|x|}e^{\alpha|y|}, 
\end{align*}
where we used that $\frac{1}{W[f^{q_{\rm av}}_+ ,f^{q_{\rm av}}_-](k)} \ = \ \frac{t^{q_{\rm av}}(k)}{-2ik}$ from~\eqref{eq:Wronskian-vs-transmission}, and $|t^{q_{\rm av}}(k)|\ \leq M_K$, from Hypotheses~{\bf (K)}. The estimate~\eqref{est-diffm}, when $l=0$, is now straightforward.

Let us now look at $\partial_y \m(x,y;k)$. Using 
\[\partial_y f^{q_{\rm av}}_\pm(y;k) \ = \ \big(\partial_y f^{q_{\rm av}}_\pm(y;k)\big)\big\vert_{y=x} \ + \ \int_x^y \big(\partial_y^2 f^{q_{\rm av}}_\pm(y;k)\big)\big\vert_{y=t} \dd t,\]
one has the identity
\[\partial_y \m(x,y;k) \ = \ 1+ \int_x^y \m(x,t;k) (q_{\rm av}(t)-k^2) \dd t. \]
If $|k||x-y|\leq 1$, we use that $\m(x,y;k)$ is uniformly bounded, and obtain
\[\big\vert \partial_y \m(x,y;k) \big\vert  \leq  1+ \int_x^y |\m(x,t;k)| |q_{\rm av}(t)-k^2| \dd t  \leq  C(1+ |x-y|+ |k|^2|x-y|)  \leq  C(1+ |x-y|)(1+|k|) . \]
When $|k||x-y|\geq 1$, one uses the definition of $\m$ with Lemma~\ref{Lem:f-gen}, and one obtains as previously
\begin{align*}
|\partial_y \m(x,y;k)|  \leq  CM_K (1+|k|)(1+|x|)(1+|y|)|x-y| e^{\alpha|x|}e^{\alpha|y|}. 
\end{align*}
Estimate~\eqref{est-diffm} follows for $l=1$, using the symmetry $\m(x,y;k)=-\m(y,x;k)$.

Estimate~\eqref{est-diffm} for $l=2$ is straightforward when remarking that
\[ \partial_y^2 \m(x,y;k) \ = \ (q_{\rm av}(y)-k^2)\m(x,y;k),\]
and the case $l=3$ follows in the same way.
\medskip

The proof when $k\in\RR$ and $q_{\rm av},\ \partial_x q_{\rm av}\in \mathcal{L}^1_{2}$ is identical, using the estimates of Lemma~\ref{Lem:f-gen-k-real} instead of Lemma~\ref{Lem:f-gen}. Note that $M_K=1$ for $k\in\RR$, using~\eqref{tr-energy}.
\end{proof}

\section{Transmission coefficient of $\sigma(x) \ \equiv \ -\eps^2\Lambda(x)$}\label{sec:seff-tools}
In this section, we study the transmission coefficient of potentials of the form $\sigma(x) \ \equiv \ -\eps^2 \Lambda(x)$, where $\Lambda\in L^{\infty}_\beta$, is independent of $\eps$. We are particularly interested in the special case where $\sigma(x)$ is the effective potential 
\[ \seff(x) \ \equiv \ \ -\frac{\eps^2 }{4\pi^2}\sum_{j\neq 0}\frac{|q_{j}(x)|^2}{{\lambda_j}^2},\]
derived earlier.


\begin{Lemma}[Transmission coefficient $t^{q_{\rm av}-\eps^2\Lambda}(k)$] \label{Lem:tsigma} 
Let $q_{\rm av}$ and $ \Lambda$ be any functions in $L^\infty_\beta$. Then, for $k\in K$ satisfying Hypotheses {\bf (K)}, one has
\begin{equation}
\frac{k}{t^{q_{\rm av}-\eps^2\Lambda}(k)} \ = \ \left(\ \frac{k}{t^{q_{\rm av}}(k)} \ -\ \frac{i\eps^2}{2}\int_{-\infty}^\infty f^{q_{\rm av}}_-(y;k)\Lambda(y)f^{q_{\rm av}}_+(y;k)\dd y \ \right)\ + \ \O\big(\eps^4\big).
\label{tavLam}
\end{equation}
\end{Lemma}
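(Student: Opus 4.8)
The point of departure is the exact comparison identity~\eqref{IVW-def}, applied with $V=q_{\rm av}-\eps^2\Lambda$ and $W=q_{\rm av}$, so that $V-W=-\eps^2\Lambda$. Since $q_{\rm av},\Lambda\in L^\infty_\beta$ we have $q_{\rm av}-\eps^2\Lambda\in L^\infty_\beta$, and since $K\subset\{|\Im(k)|<\alpha\}$ with $\alpha<\beta/2$ avoids the poles of $t^{q_{\rm av}}$ by Hypotheses~{\bf (K)}, all quantities below are well-defined and bounded uniformly for $k\in K$. Using $\tfrac{1}{2i}=-\tfrac{i}{2}$, identity~\eqref{IVW-def} becomes
\begin{equation}
\frac{k}{t^{q_{\rm av}-\eps^2\Lambda}(k)} \ = \ \frac{k}{t^{q_{\rm av}}(k)} \ - \ \frac{i\eps^2}{2}\int_{-\infty}^\infty f^{q_{\rm av}}_-(y;k)\,\Lambda(y)\,f^{q_{\rm av}-\eps^2\Lambda}_+(y;k)\dd y.
\label{plan:exact}
\end{equation}
The right-hand side already matches the claimed expression~\eqref{tavLam}, \emph{except} that the second Jost factor is $f^{q_{\rm av}-\eps^2\Lambda}_+$ rather than $f^{q_{\rm av}}_+$. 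The plan is therefore to replace the former by the latter at the price of only an $\O(\eps^4)$ error: since the correction term in~\eqref{plan:exact} already carries a prefactor $\eps^2$, it suffices to show that the difference $\delta_+(y;k)\equiv f^{q_{\rm av}-\eps^2\Lambda}_+(y;k)-f^{q_{\rm av}}_+(y;k)$ contributes at order $\eps^2$ to the integral.

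To control $\delta_+$, I would use the Volterra representation~\eqref{eq:Volterra1} of Proposition~\ref{prop:q2s}, again with $V=q_{\rm av}-\eps^2\Lambda$ and $W=q_{\rm av}$, which expresses $f^{q_{\rm av}-\eps^2\Lambda}_+$ as a perturbation of $f^{q_{\rm av}}_+$ with kernel proportional both to $(V-W)=-\eps^2\Lambda$ and to $\m(x,y;k)$. Subtracting $f^{q_{\rm av}}_+$ yields a Volterra equation for $\delta_+$ whose inhomogeneous term is $\O(\eps^2)$ and whose kernel is $\O(\eps^2)$. Feeding in the Jost-solution bounds of Lemma~\ref{Lem:f-gen} (growth at most $(1+|y|)e^{\alpha|y|}$), the bounds on $\m$ of Lemma~\ref{Lem:m} (controlled by $M_K$ on $K$), and using that $\Lambda\in L^\infty_\beta$ with $\beta>2\alpha$ so that the weights $e^{2\alpha|y|}|\Lambda(y)|$ are integrable, a Gronwall/Neumann-series argument gives, uniformly for $k\in K$ and $\eps$ small enough for the Volterra operator to be a contraction,
\begin{equation}
|\delta_+(y;k)| \ \le \ \eps^2\ C\big(|\Lambda|_{L^\infty_\beta},|q_{\rm av}|_{W^{1,\infty}_\beta},M_K,\sup_{k\in K}|k|\big)\ (1+|y|)\,e^{\alpha|y|}.
\label{plan:delta}
\end{equation}

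Finally, substituting $f^{q_{\rm av}-\eps^2\Lambda}_+=f^{q_{\rm av}}_++\delta_+$ into~\eqref{plan:exact} splits the integral into the desired leading term plus the remainder $-\tfrac{i\eps^2}{2}\int f^{q_{\rm av}}_-\,\Lambda\,\delta_+\dd y$. Bounding this remainder with $|f^{q_{\rm av}}_-(y;k)|\lesssim(1+|y|)e^{\alpha|y|}$ (Lemma~\ref{Lem:f-gen}), the estimate~\eqref{plan:delta}, and $|\Lambda(y)|\le|\Lambda|_{L^\infty_\beta}e^{-\beta|y|}$, one obtains an integrand dominated by $\eps^2(1+|y|)^2 e^{(2\alpha-\beta)|y|}$, which is integrable since $\beta>2\alpha$; hence the remainder is $\O(\eps^4)$, uniformly in $k\in K$, which establishes~\eqref{tavLam}. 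The main obstacle is the uniform-in-$k$ difference estimate~\eqref{plan:delta}: one must keep the Wronskian $\mathcal{W}[f^{q_{\rm av}}_+,f^{q_{\rm av}}_-]=-2ik/t^{q_{\rm av}}(k)$ bounded away from zero on $K$ (precisely what Hypotheses~{\bf (K)} and the quantity $M_K$ of~\eqref{def-Malpha} provide), so that the Volterra kernel stays $\O(\eps^2)$ throughout the strip, while ensuring the exponential weights balance so that every integral converges uniformly.
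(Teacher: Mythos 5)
Your proposal is correct and follows essentially the same route as the paper: both start from identity~\eqref{IVW-def} with $V=q_{\rm av}-\eps^2\Lambda$, $W=q_{\rm av}$, then use the Volterra equation~\eqref{eq:Volterra1} together with the bounds of Lemmata~\ref{Lem:f-gen} and~\ref{Lem:m} to replace $f^{q_{\rm av}-\eps^2\Lambda}_+$ by $f^{q_{\rm av}}_+$ at cost $\O(\eps^2)$ inside the already $\O(\eps^2)$ integral. The only cosmetic difference is that the paper substitutes the Volterra representation directly into $I^\eps(k)$ and bounds the resulting double integral (using that $q_{\rm av}-\eps^2\Lambda\in L^\infty_\beta$ so Lemma~\ref{Lem:f-gen} applies to the perturbed Jost solution), whereas you first extract a pointwise Gronwall bound on the difference $\delta_+$; both are valid and equivalent in substance.
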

\begin{proof}

We recall the identity~\eqref{IVW-def}, satisfied by the transmission coefficient related to {\em any potential} $V,W\in L^\infty_\beta$:
\[\frac{k}{t^V(k)} \ = \ \frac{k}{t^W(k)}  -  \frac{ I^{[V,W]}(k)}{2i},\ \  \text{with } \ I^{[V,W]}(k) \equiv \int_{-\infty}^\infty f^W_-(y;k)(V-W)(y)f^V_+(y;k)\dd y.\]

Now, in the case where $W\equiv q_{\rm av}$ and $V \ \equiv \ q_{\rm av}-\eps^2\Lambda(x)$, one has
\[\frac{k}{t^{q_{\rm av}-\eps^2\Lambda}(k)} \ - \ \frac{k}{t^{q_{\rm av}}(k)} \ =\ - \frac{i\eps^2}{2} I^{\eps}(k), \quad I^{\eps}(k) \ \equiv \ \int_{-\infty}^\infty f^{q_{\rm av}}_-(y;k)\Lambda(y)f^{q_{\rm av}-\eps^2\Lambda}_+(y;k)\dd y.\]

Then, the Volterra equation~\eqref{eq:Volterra1} with $V=q_{\rm av}-\eps^2\Lambda$ and $W=q_{\rm av}$, leads to 
\[
f^{q_{\rm av}-\eps^2\Lambda}_+(x;k)  =  f^{q_{\rm av}}_+(x;k)  - \eps^2 \ \int_x^\infty \Lambda(y) \frac{f^{q_{\rm av}}_+(x;k)f^{q_{\rm av}}_-(y;k)-f^{q_{\rm av}}_-(x;k)f^{q_{\rm av}}_+(y;k)}{W[f^{q_{\rm av}}_+ ,f^{q_{\rm av}}_-]} f^{q_{\rm av}-\eps^2\Lambda}_+(y;k) \dd y.\]
We can then use the estimates of Lemmata~\ref{Lem:f-gen} and~\ref{Lem:m}, so that 
\begin{align*}
& \left|\ I^{\eps}(k)-\int_{-\infty}^\infty f^{q_{\rm av}}_-(y;k)\Lambda(y)f^{q_{\rm av}}_+(y;k)\dd y\ \right| \\
 &\qquad \leq \ 
C\eps^2 \int_{-\infty}^\infty f^{q_{\rm av}}_-(y;k) \Lambda(y)\int_y^\infty \Lambda(z) \m(y,z;k) f^{q_{\rm av}-\eps^2\Lambda}_+(z;k) \dd z \dd y\\
 &\qquad \le \eps^2 M_K C , \qquad \text{ uniformly for } k\in K.\end{align*}
 This concludes the proof. \end{proof}

A simple consequence is the following
\begin{Corollary}\label{cor:tseff} Let $q_{\rm av}$ and $ \Lambda$ be functions in $L^\infty_\beta$. Then,
\begin{enumerate}[(1)]
\item If $q_{\rm av}$ is generic, in the sense of Definition~\ref{def:generic}, then $q_{\rm av}-\eps^2\Lambda$ is generic for $\eps$ sufficiently small.
\item If $q_{\rm av}$ is non-generic, and $\int_{-\infty}^\infty \Lambda(y)(f^{q_{\rm av}}_+(y;0))^2\dd y\neq 0$, then $q_{\rm av}-\eps^2\Lambda$ is generic for $\eps$ sufficiently small.
\item
 If $q_{\rm av}\equiv0$, and $k\in K$ satisfy Hypotheses {\bf (K)}. Then, 
\begin{equation}
\frac{k}{t^{-\eps^2\Lambda}(k)} \ = \ \ k\ -\ 
\frac{i\eps^2}{2}\int_{-\infty}^\infty\ \Lambda(y)\ \dd y\ + \ \O\big(\eps^4\big),
\label{eqnfork-app}
\end{equation}
uniformly in $k\in K$. 
  It follows that if
 \[
 \left| k - \frac{i\eps^2}{2}\int_{-\infty}^\infty \Lambda\right|\ \geq\ C\ \max( \eps^\tau, |k|), \qquad \text{for}\ \ \ \tau<4,\ \ k\in K, 
 \]
then one has
 \begin{equation}
\left| t^{-\eps^2\Lambda}(k) \ - \ \frac{k}{k-\frac{i\eps^2}{2}\int_{-\infty}^\infty \Lambda} \right| \ = \ \O\big(\eps^{4-\tau}\big).
\label{compare-tseffq0iszero}
\end{equation}
\end{enumerate}
\end{Corollary}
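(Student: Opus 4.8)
The entire statement is a corollary of Lemma~\ref{Lem:tsigma}, which records that
\[
\frac{k}{t^{q_{\rm av}-\eps^2\Lambda}(k)} = \frac{k}{t^{q_{\rm av}}(k)} - \frac{i\eps^2}{2}\int_{\RR} f^{q_{\rm av}}_-(y;k)\,\Lambda(y)\,f^{q_{\rm av}}_+(y;k)\dd y + \O(\eps^4),
\]
uniformly for $k\in K$. The plan is to combine this identity with the reformulation of genericity from Definition~\ref{def:generic}: a potential $V$ is generic precisely when $\lim_{k\to0} k/t^V(k)\neq0$, and non-generic exactly when this limit vanishes (since then $t^V(0)$ is finite and nonzero, forcing $k/t^V(k)\to0$). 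Throughout I apply the Lemma with $K$ taken to be a small closed disc about $0$, which contains no pole of $t^{q_{\rm av}}$ in either case.

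For parts (1) and (2) I would pass to the limit $k\to0$ in the displayed identity, which is legitimate because the remainder is uniform in $k\in K$ and the two leading terms are continuous at $k=0$. In case (1) set $c_0\equiv\lim_{k\to0}k/t^{q_{\rm av}}(k)$; genericity of $q_{\rm av}$ gives $c_0\neq0$, while the integral contributes only an $\O(\eps^2)$ correction, so $\lim_{k\to0}k/t^{q_{\rm av}-\eps^2\Lambda}(k)=c_0+\O(\eps^2)\neq0$ for $\eps$ small, i.e. $q_{\rm av}-\eps^2\Lambda$ is generic. In case (2) non-genericity forces $c_0=0$, so the limit reduces to $-\tfrac{i\eps^2}{2}\int_\RR f^{q_{\rm av}}_-(y;0)\Lambda(y)f^{q_{\rm av}}_+(y;0)\dd y + \O(\eps^4)$; here I would invoke the fact recorded in Section~\ref{sec:generic} that at zero energy the non-generic Jost solutions $f^{q_{\rm av}}_\pm(\cdot;0)$ are real (by~\eqref{reality-condition}) and proportional with a finite nonzero constant, so that $\int_\RR f^{q_{\rm av}}_-\Lambda f^{q_{\rm av}}_+$ and $\int_\RR\Lambda (f^{q_{\rm av}}_+)^2$ vanish together. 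The hypothesis $\int_\RR\Lambda (f^{q_{\rm av}}_+(\cdot;0))^2\neq0$ then makes the $\O(\eps^2)$ term the nonzero leading contribution to the limit, establishing genericity.

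For part (3) I would specialize $q_{\rm av}\equiv0$, where $f^0_\pm(y;k)=e^{\pm iky}$ and $t^0\equiv1$, so that $k/t^0(k)=k$ and the integral collapses to $\int_\RR e^{-iky}\Lambda(y)e^{iky}\dd y=\int_\RR\Lambda$; this yields~\eqref{eqnfork-app} directly. To pass from this reciprocal estimate to the bound~\eqref{compare-tseffq0iszero} on $t^{-\eps^2\Lambda}$ itself, write $f(k)\equiv k-\tfrac{i\eps^2}{2}\int_\RR\Lambda$, so that $k/t^{-\eps^2\Lambda}(k)=f(k)+\O(\eps^4)$ and
\[
t^{-\eps^2\Lambda}(k) - \frac{k}{f(k)} = \frac{-k\,\O(\eps^4)}{\big(f(k)+\O(\eps^4)\big)\,f(k)}.
\]
The hypothesis $|f(k)|\geq C\max(\eps^\tau,|k|)$ with $\tau<4$ supplies two things at once: $|k|/|f(k)|\leq C^{-1}$, and $|f(k)|\geq C\eps^\tau\gg\eps^4$, whence $|f(k)+\O(\eps^4)|\geq\tfrac12|f(k)|\geq\tfrac{C}{2}\eps^\tau$ for $\eps$ small. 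Feeding these into the displayed quotient bounds it by $\O(\eps^4)/\eps^\tau=\O(\eps^{4-\tau})$, which is~\eqref{compare-tseffq0iszero}.

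The only genuinely delicate points, rather than real obstacles, are the two denominator manipulations: in part (2) one must ensure the proportionality constant relating $f^{q_{\rm av}}_+(\cdot;0)$ and $f^{q_{\rm av}}_-(\cdot;0)$ is finite and nonzero so that the two integrals are equivalent up to a nonzero scalar; and in part (3) one must track that the $\O(\eps^4)$ remainder is small \emph{relative to} the possibly-small denominator $f(k)$, which is exactly where the restriction $\tau<4$ is used. Everything else is the uniform-in-$k$ bookkeeping already provided by Lemma~\ref{Lem:tsigma}.
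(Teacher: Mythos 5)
Your proposal is correct and follows essentially the same route as the paper's (very terse) proof: both deduce items (1) and (2) from Lemma~\ref{Lem:tsigma} combined with the characterization of genericity as $\lim_{k\to0}k/t^{V}(k)\neq 0$, and obtain item (3) by substituting $t^{0}\equiv 1$ and $f^{0}_{\pm}(y;k)=e^{\pm iky}$ into~\eqref{tavLam}. The paper dismisses the final denominator estimate leading to~\eqref{compare-tseffq0iszero} as ``straightforward computations''; your explicit handling of it (including the role of $\tau<4$ in keeping the $\O(\eps^4)$ remainder small relative to $f(k)$) is just the correct filling-in of that step.
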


\begin{proof}
As discussed in section~\ref{sec:generic}, a potential, $V$, is generic, if and only if its transmission coefficient satisfies $t^V(0)=0$ or, equivalently, if 
$\lim_{k\to 0} \frac{k}{t^V(k)}\neq 0$. Items~(1) and~(2) are therefore a straightforward consequence of~\eqref{tavLam}. As for item~(3),
since $q_{\rm av}(x)\equiv0$, we have $t^{q_{\rm av}}\equiv1$ and 
$f^{q_{\rm av}}_\pm(x;k)=e^{\pm ikx}$. The result follows by substitution into~\eqref{tavLam}, and straightforward computations. \end{proof}


\noindent{\bf Acknowledgements:} I.V. was supported in part by U.S. NSF EMSW21- RTG: Numerical Mathematics for Scientific Computing and U.S. NSF DGE IGERT-1069240. M.I.W. was supported in part by U.S. NSF Grant DMS-10-08855.

The authors wish to thank Percy Deift, Jared C. Bronski and Vadim Zharnitsky for stimulating discussions.


\frenchspacing
\bibliographystyle{plain}

\begin{thebibliography}{99}

\bibitem{AblowitzSegur}
M.~J. Ablowitz and H.~Segur.
\newblock \textit{Solitons and the inverse scattering transform}, volume~4 of \textit{
 SIAM Studies in Applied Mathematics}.
\newblock Society for Industrial and Applied Mathematics (SIAM), Philadelphia,
 Pa., 1981.

\bibitem{ArtbazarYajima00}
G.~Artbazar and K.~Yajima.
\newblock The {$L^p$}-continuity of wave operators for one dimensional
 {S}chr\"odinger operators.
\newblock \textit{ J. Math. Sci. Univ. Tokyo}, \textbf{7} (2000), no. 2, 221--240.

\bibitem{DeiftTrubowitz79}
P.~Deift and E.~Trubowitz.
\newblock Inverse scattering on the line.
\newblock \textit{ Comm. Pure Appl. Math.}, \textbf{32} (1979), no. 2, 121--251.

\bibitem{DDEW-inprogress}
P.~Desjardin, V.~Duch{\^e}ne, D.~Englund and M.I.~Weinstein.
\newblock Subwavelength localization in photonic structures.
\newblock \textit{in progress}

\bibitem{DucheneWeinstein:11}
V.~Duch{\^e}ne and M.I.~Weinstein.
\newblock Scattering, homogenization and interface effects for oscillatory
 potentials with strong singularities.
\newblock \textit{ Multiscale Model. Simul.}, \textbf{9} (2011), no.3, 1017--1063.

\bibitem{Goldberg07}
M.~Goldberg.
\newblock Transport in the one-dimensional {S}chr\"odinger equation.
\newblock \textit{ Proc. Amer. Math. Soc.}, \textbf{135} (2007), no. 10, 3171--3179.

\bibitem{GolowichWeinstein05}
S.~E. Golowich and M.~I. Weinstein.
\newblock Scattering resonances of microstructures and homogenization theory.
\newblock \textit{ Multiscale Model. Simul.}, \textbf{3} (2005), no. 3, 477--521.

\bibitem{Landau-Lifshitz:76}
L.~Landau and E.~Lifshitz.
\newblock \textit{ Mechanics}.
\newblock Pergamon Press, Oxford, 1976.

\bibitem{Levi-Weckesser:95}
M.~Levi and W.~Weckesser.
\newblock Stabilization of the inverted linearized pendulum by high frequency
 vibrations.
\newblock \textit{ SIAM Review}, \textbf{37} (1995), no. 2, 219--223.

\bibitem{Newton86}
R.~G. Newton.
\newblock Low-energy scattering for medium-range potentials.
\newblock \textit{ J. Math. Phys.}, \textbf{27} (1986), no. 11, 2720--2730.

\bibitem{ReedSimonIII}
M.~Reed and B.~Simon.
\newblock \textit{ Methods of modern mathematical physics. {III}. {S}cattering
 theory}.
\newblock Academic Press [Harcourt Brace Jovanovich Publishers], New York,
 1979.

\bibitem{Schlag07}
W.~Schlag.
\newblock Dispersive estimates for {S}chr\"odinger operators: a survey.
\newblock In \textit{ Mathematical aspects of nonlinear dispersive equations},
 volume 163 of \textit{ Ann. of Math. Stud.}, pages 255--285. Princeton Univ.
 Press, Princeton, NJ, 2007.

\bibitem{Simon76}
B.~Simon.
\newblock The bound state of weakly coupled {S}chr\"odinger operators in one
 and two dimensions.
\newblock \textit{ Ann. Physics}, \textbf{97} (1976), no.2, 279--288.

\bibitem{Weder00}
R.~Weder.
\newblock Center manifold for nonintegrable nonlinear {S}chr\"odinger equations
 on the line.
\newblock \textit{ Comm. Math. Phys.}, \textbf{215} (2000), no. 2, 343--356.

\end{thebibliography}
\def\cprime{$'$}

\end{document}